\newcommand{\nodeset}{\mathcal{N}}
\newcommand{\iso}{\cong}
\newcommand{\inj}{\mathrm{inj}}
\newcommand{\aut}{\mathrm{aut}}
\newcommand{\sub}{\mathrm{sub}}
\newcommand{\E}{\mathrm{E}}
\newcommand{\pa}{\mathrm{pa}}
\newcommand{\cip}{\mbox{\,$\perp\!\!\!\perp$\,}}
\newcommand{\cd}{\,|\,}
\newcommand{\skel}{\mathrm{sk}}
\newcommand{\ci}{\mbox{\protect $\: \perp \hspace{-2.3ex}
\perp$ }}
\newcommand{\n}[0]{\hspace*{.35em}}
\newcommand{\nn}[0]{\hspace*{.7em}}
\newcommand{\node}{\mbox {\LARGE
{$\mbox{$\circ$}$}}}
\newtheorem{prop}{Proposition}
\newtheorem{coro}{Corollary}
\newtheorem{lemma}{Lemma}
\newtheorem{expl}{Example}
\newcommand{\halm}{\hspace*{\fill} $\Box$\par}
\newenvironment{example}{\begin{expl}\rm}{\halm\end{expl}}
\title{Random Networks, Graphical Models,\\ and Exchangeability}
\author{Steffen Lauritzen\thanks{Steffen Lauritzen, Department of Mathematical Sciences, University of Copenhagen, Universitetsparken 5, 2100, Copenhagen, Denmark. email: \texttt{lauritzen@math.ku.dk}}\\University of Copenhagen \and 
Alessandro Rinaldo\\
Carnegie Mellon University\and
{Kayvan Sadeghi}\\
University of Cambridge}
\begin{document}
\maketitle
\begin{abstract} 
We study conditional independence relationships for random
    networks and their interplay with exchangeability.
    We show that, for finitely exchangeable network models, the empirical subgraph densities are
    maximum likelihood estimates of their theoretical counterparts.
    We then characterize all possible Markov structures for finitely exchangeable random
    graphs, thereby identifying a new class of Markov network models
    corresponding to bidirected Kneser graphs. In particular, we demonstrate
    that the fundamental
    property of  dissociatedness corresponds to a Markov property for
    exchangeable networks described by
    bidirected line graphs.
    Finally we study those exchangeable models that are also
    summarized in the sense that the probability of a network only depends on
    the degree distribution, and identify a class of models that is dual to the
    Markov graphs of \cite{fra86}. Particular emphasis is placed on studying consistency properties of network models under the process of forming subnetworks and  we show that the only consistent systems of Markov properties correspond to the empty graph, the bidirected line graph of the complete graph, and the complete graph.\\

\noindent \textbf{Keywords:} bidirected Markov property, de Finetti's theorem, exchangeable arrays, exponential random graph model, graph limit, graphon, Kneser graph, marginal beta model, M{\"o}bius parametrization, Petersen graph.
\end{abstract}

\section{Introduction}
Over the last decades, the collection and rapid diffusion of network data
originating from a wide spectrum
 of scientific areas have created the need for new statistical theories and
methodologies for modeling and analyzing large random graphs.
There now exists a very  large and rapidly growing body of literature on network
analysis: see, for example, \cite{Kolaczyk:2009}, \cite{Newman:2010},
and references therein.
Since the seminal contributions of \cite{fra86} and \cite{hol81},
the field of statistical network modeling has
advanced significantly and researchers have now gained a  much broader understanding of the
properties of network data and the many open challenges associated to
network modeling.

A common feature shared by many network models is that
of invariance to
the relabeling of the network units, or (finite) exchangeability, whereby  isomorphic graphs
have the same probabilities, and are therefore regarded as statistically equivalent.
Exchangeability is a basic form of probabilistic invariance, but also
 a natural and convenient simplifying assumption to impose when formalizing statistical models for
random graphs. Examples of popular network models which rely on exchangeability
include
many exponential random graph models, the stochastic block model, graphon-based models, latent space
models, to name a few.

In the
probability literature, exchangeability of infinite binary arrays --- which
encompasses
networks  --- is very well understood: see for example \cite{aldous:81,aldous:85,lauritzen:03,lauritzen:08}; and
\cite{kallenberg:05}. Of particular
relevance here is the work of \cite{diaconis:janson:08} \citep[but see
also][]{orbantz:roy:15}, which details the connections between
exchangeability of random graphs and the notion of graph limits developed in
\cite{lov06}.
While the existing results provide a canonical, \emph{analytic} representation of
infinite exchangeable networks  (see later in Section~\ref{sec:prelim}), they
appear to be too implicit for statistical modeling purposes. Indeed,
the properties implied by such representation
do not directly yield simple parametric statistical models for
finite networks.
Furthermore, while an exchangeable probability distribution for an infinite
networks by construction induce consistent families of exchangeable
distributions for all
its finite subnetworks, there is no guarantee that an exchangeable distribution
for networks of a given size may be extendable to a probability distribution for larger
(possibly infinite)
networks, or that the induced distributions share
the similar properties.  Thus, {\it a priori\/} any statistical model for finite exchangeable
networks that does not conform to the analytic representation of infinite
exchangeable networks need not be compatible in any 
meaningful  way with
models of the same type over networks of different sizes. These issues are
of great significance, as they render very difficult to determine the extent to which statistical inference
based on a subnetwork applies to a larger network, as argued in
\cite{CD:15}, and \cite{shalizi:rinaldo:13}.

The main goal of our article is to reveal and point out similarities and connections between seemingly unrelated concepts from different parts of the literature --- in particular graphical models, network models, and notions of exchangeability --- and our analysis is attempting to enhance understanding of the models and their properties.

We rely on the theory of exponential families and, especially, of
graphical models \citep{lau96} in order to categorize Markov properties
 implied by the assumption of  exchangeability, and the
 statistical models that can be derived from it. The idea of modeling networks
 as Markov random fields on binary variables is not new: it was originally suggested by
 \cite{fra86}, who argue that a particular class of exponential random graph models (to be
 discussed  below in Section \ref{sec:indep.structures})
 is able to capture what the authors, somewhat arbitrarily, posit as a natural form of dependence structure for
 networks, as well as the symmetries implied by
 exchangeability.  In contrast,  our analysis of the Markov
 properties of exchangeable network models is both principled, as it only relies
 on the assumption of exchangeability, and exhaustive, as it produces a complete
 list of all Markov properties expressed by exchangeable network models.
 We make the following specific contributions:
 \begin{itemize}
     \item We describe an exponential family representation for exchangeable
     	 network
     	 models where the sufficient statistics are given by all subgraph
     	 counts. The subgraph frequencies are maximum likelihood estimators
	 of the corresponding mean value parameters, which are
	 the marginal
	 probabilities of all subgraphs.
     \item  We study the Markov properties of finitely exchangeable network
     	 models, and show that such models are compatible only with four types
     	 of non-trivial conditional independence structures.

     \item We demonstrate that statistical models for finite networks induced by
     	 extremal exchangeable distributions on an infinite network
     	 are Markov with respect to a distinguished bidirected graphical model for marginal
     	 independence. 

     \item Finally, we propose a novel class of models for exchangeable networks
	 that are also {\it summarized,} in the sense that their properties  only depend on the
     	 degree distribution of the observed network.
 \end{itemize}
We should emphasize that the scope of this paper has been  strictly limited to the study of full exchangeablity and Markov structures in the sense used for standard graphical models and, as we show, the assumption of full exchangeabiliy, extendibility, and Markov properties in this sense is very restrictive. 

There are important types of Markov properties for random networks that we have not considered in this paper, notably that of \emph{partial conditional independence} \citep{snijders:06}, also used in \cite{hunter:etal:08}.  This concept allows the conditional independence properties of specific ties in a network to depend on the configuration of the remaining network. This type of Markov property is common in the analysis of spatial point processes, where it is referred to as \emph{data dependent Markov neighbourhoods} \citep{baddeley:moller:89}; indeed, a random network can be considered as a point process on the space of possible ties.  Also in the literature on Bayesian networks and graphical models, an analogous Markov theory has evolved and here the term \emph{context-specific independence} is often used, see e.g.\ \cite{Boutilier:etal:96} or \cite{nyman:etal:14} for a recent discussion.

Similarly, it is of interest but outside the scope of the present paper to discuss other variants of exchangeability, where we in particular mention \emph{partial exchangeability}  or \emph{block exchangeability} as used in \cite{schweinberger:handcock:15} or models derived from exchangeable measures, as in \cite{caron:fox:17}.

The structure of our paper is as follows. In Section~\ref{sec:prelim} we develop the necessary terminology and detail the concepts of exchangeable arrays, random networks, bidirected Markov properties, and important parametrizations of the models. In Section~\ref{sec:5} we provide parametrizations for exchangeable networks and their maximum likelihood estimation. In Section~\ref{sec:indep.structures} we investigate the consequences of exchangeability in its interplay with associated Markov properties. In Section~\ref{sec:summarized} we study the consequences of adding the restrictions of summarizedness, and in Section~\ref{sec:consistent} we study consistency properties under subsampling of various systems of network models.

\section{Preliminaries}\label{sec:prelim}
In this section we gather some background material on graph-theoretic concepts,
parametrizations of distributions of binary arrays and graphical modeling.
\subsection{Graph-theoretic concepts}
Following \cite{wes01}, a \emph{labeled graph} is determined by an ordered pair $G = (V,E)$ consisting of a
\emph{vertex} set $V$, an \emph{edge} set $E$, and a relation that with
each edge associates two vertices, called
its \emph{endpoints}. When vertices $u$ and $v$ are the endpoints of an
edge, these are
\emph{adjacent} and we write $u\sim v$; we denote the corresponding edge as
$uv$.

In this paper, we will restrict to \emph{simple} graphs, i.e.\ graphs without
loops (the
endpoints of each edge are distinct) or multiple edges (each pair of vertices are the endpoints
of at most one edge); simple graphs are determined by the pair $G=(V,E)$ alone.  We will distinguish three \emph{types} of edge, denoted by
\emph{arrows}, \emph{arcs} (solid lines with two-headed arrows) and \emph{lines}
(solid lines).  Arrows can be represented by ordered pairs of vertices, while arcs
and lines by $2$-subsets of the vertex set. The graphs in the present paper only contain one of
these types, and are called, respectively, \emph{directed},
\emph{bidirected}, or \emph{undirected}.

The labeled graphs $F=(V_F,E_F)$ and $G=(V_G,E_G)$ are considered equal if and only
if $(V_F,E_F)=(V_G,E_G)$.
We omit the term labeled when the context prevents ambiguity.

A \emph{subgraph} of a graph $G = (V_G,E_G)$ is a graph $F = (V_F,E_F)$ such that
$V_F\subseteq V_G$ and $E_F\subseteq E_G$ and the assignment of endpoints to
edges in $F$
is the same as in $G$. We will write $F \subset G$ to signify that $F$ is a
subgraph of $G$. For a graph $G=(V,E)$, any non-empty subset $A$ of the vertices generates
the subgraph $G(A)$ consisting of all and only vertices in $A$ and edges between two
vertices in $A$,  called the \emph{subgraph induced by} $A$.
Similarly, a subset $B\subseteq E$ of edges induces a subgraph  that contains the edges in $F$ and all and only vertices that are endpoints of edges in $B$.  We note that \emph{edge induced subgraphs have no isolated vertices. }

A \emph{complete graph} is a graph where all nodes are adjacent, and a \emph{$k$-star} is a graph where one node (called the \emph{hub}) is adjacent to all other nodes and there is no other edge in the graph. The \emph{line graph} $L(G)$ of a graph $G=(V,E)$ is the intersection graph of
the edge set $E$, i.e.\ its vertex set is $E$ and $e_1\sim e_2$ if and only if
$e_1$ and $e_2$ have a common endpoint \citep[p.\ 168]{wes01}. We will in
particular be interested in the line graph of the complete graph, which we will
refer to as the \emph{incidence graph}.

\subsection{Homomorphism densities}

We will rely on the notion of subgraph homomorphism density, a graph-theoretic concept that is central to the theory of graph
convergence \citep[see, e.g.,][]{lov12}.

A map $\phi:V_F \to V_G$ between the vertex sets of two  simple
graphs $F=(V_F,E_F)$ and $G=(V_G,E_G)$ is a \emph{homomorphism} if it preserves
edges, i.e.\ if $uv$ in $E_F$ implies $\phi(u)\phi(v)\in E_G$.
A homomorphism $\phi$ is an \emph{isomorphism} if it is a bijection and its inverse is a homomorphism,
i.e.\ $\phi$ also preserves non-edges. If there is an isomorphism between
$F$ and $G$, the graphs are \emph{isomorphic}, and we write $F\iso G$.  An
isomorphism can also be thought of as a \emph{relabelling} of the vertex
set. The isomorphism relation is an equivalence relation among graphs with the
same vertex set. Accordingly, for a graph $G=(V,E)$ we may identify the
corresponding equivalence class as an \emph{unlabeled graph} with $|V|$ vertices. An isomorphism $\phi$
between the graphs $G$ and $G$  is an \emph{automorphism}.  
Then $\aut(G)$  denotes the size of the \emph{automorphism group} ,
i.e.\ the group of automorphisms of $G$.

Following \citet[Ch.\ 5.2]{lov12} and assuming that $|V_F| \leq |V_G|$, we let $\inj(F,G)$ denote
the number of injective graph homomorphisms  between
$F$ and $G$.
Note that if $G$ is a complete graph (i.e. all the vertices are adjacent),  all
maps are homomorphisms, and
$\inj(F,G)=(|V_G|)_{|V_F|}=|V_G|!/(|V_G|-|V_F|)!$. Furthermore, we have that
$\inj(G,G)=\aut(G)$. 
The \emph{injective homomorphism density} of $F$ in $G$ is defined to be
\[
    t_{\mathrm{inj}}(F,G) = \frac{\mathrm{inj}(F,G)}{ (|V_G|)_{|V_F|}},
    \]
    the proportion of all injective mappings from $V_F$ into $V_G$ that are
    homomorphisms. It is immediate from the above definition
    that homomorphism densities are invariant under isomorphisms, in the sense
    that $t_{\mathrm{inj}}(F,G) = t_{\mathrm{inj}}(F',G')$, for any pairs $F
    \iso F'$ and $G \iso G'$.  Thus, density homomorphisms are also well defined over
    unlabeled graphs.
    Indeed, the injective homomorphism density
    $t_{\mathrm{inj}}(F,G)$  can be interpreted as the probability
that $F$ is realized as a random subgraph of $G$ \citep[see,
    e.g.][]{diaconis:janson:08}.

Finally, we will be interested in the number $\sub(F,G)$ of (not necessarily induced) subgraphs of $G$ that  are isomorphic
to $F$.  This is given as
\begin{equation}\label{eq:subcount}\
    \sub(F,G)= \inj(F,G)/\inj(F,F)=\inj(F,G)/\aut(F).
\end{equation}
Clearly, also $\sub(F,G)$ remains unchanged if $F$ and $G$ are replaced by isomorphic
graphs $F'$ and $G'$ so that $\sub(\cdot,\cdot)$ is well defined
over unlabeled graphs.

\subsection{The M\"{o}bius parametrization for binary
arrays}\label{sec:mobius.param}

As explained later in detail, network modeling can be considered equivalent to modeling the joint distribution of a collection of binary random
variables. For this purpose we will find it convenient to rely on a
particular parametrization of such distributions that is based on the  M\"{o}bius
transform \citep[see, e.g.][Appendix A3]{lau96} and that therefore, following
\cite{drt08}, we call the {\it
M\"{o}bius parametrization}.

Let $V$ be a finite set indexing a collection $\{ X_v, v \in V\}$ of binary
random variables taking value in $\{0,1\}^{V}$ and for a non-empty subset $B$ of
$V$ let $X_B = (X_v,v \in B)$.
The M\"{o}bius parametrization arises from taking the
M\"{o}bius transform of the vector of joint probabilities of each point in
$\{0,1\}^V$. This transformation, which is linear and invertible, in turns
yields a vector of marginal, as opposed to joint, probabilities. In detail, let
$\mathcal{B}$ denotes the set of all subsets of $V$ and, for each non-empty $B
\in \mathcal{B}$, set $z_B  = \mathbb{P}(X_B = 1_B)$,
where $1_B$ is the $|B|$-dimensional vector of all $1$'s. Further, set
$z_{\varnothing} = 1$.
Then, using M\"obius inversion, we have that, for each $H \in \mathcal{B}$,
\begin{equation}\label{eq:mobius}
    \mathbb{P}(X_H=1_H,X_{H^c}=0_{H^c})=\sum_{B\in \mathcal{B}:H\subseteq
B}(-1)^{|B\setminus H|}z_B,
\end{equation}
where $H^c = V \setminus H$.
Similarly, for each $B \in \mathcal{B}$,
$$z_B=\mathbb{P}(X_B=1_B)= \sum_{H\in \mathcal{B}:B\subseteq
H} \mathbb{P}(X_H=1_H,X_{H^c}=0_{H^c}).$$
The marginal probabilities $\{ z_B, B \in \mathcal{B}\}$ are called {\it the
    M\"{o}bius parameters\/} of the distribution of $X_V$. The above formulae show that
there is a one-to-one and linear relation between joint probabilities and their
M\"obius parameters and that the only restriction on the set of feasible
M\"obius parameters is that all expressions of the form (\ref{eq:mobius}) should
be non-negative, and $z_\varnothing =1$.

We can represent the set of all strictly positive probabilities on $\{
0,1\}^V$ as an exponential family with canonical sufficient
statistic given by
\[
    x \in \{0,1\}^V \mapsto s(x) \in \{0,1\}^{ |\mathcal{B}| -1 } ,
\]
where $s(x) = \left( s_B(x), B \in \mathcal{B} \setminus \varnothing \right)$, with
\[
s_B(x)=\prod_{b\in B}x_b, \quad B\in\mathcal{B};
\]
see, for instance, \cite{fra86}.
The exponential family on $\{0,1\}^V$ generated by these sufficient statistics
consists of all probability distributions of the form
\begin{equation}\label{model23}
    \mathbb{P}(X = x) = P_{\nu} (x)
=\exp \left\{ \sum_{B \in \mathcal{B} \setminus \varnothing }
\nu_Bs_B(x)-\psi(\nu) \right\}, \quad x \in
\{0,1\}^V,
\end{equation}
for any choice of canonical parameters $\nu = (\nu_B, B \in \mathcal{B} \setminus
\varnothing) \in
\mathbb{R}^{\mathcal{B} \setminus \varnothing}$, and
where $\psi(\nu)$ is the \emph{log-partition function}, ensuring that probabilities add to unity.
The above exponential family is minimal, full and regular, and the  \emph{mean value parameters}
\citep{barndorff:78} for this representation are precisely the M\"{o}bius parameters corresponding to non-empty
subsets of $V$:
\[
    \E\{s_B(X)\} = z_B, \quad B \in \mathcal{B} \setminus \varnothing.
\]

\subsection{Undirected and bidirected graphical models}\label{sec:23}

Graphical models \citep[see, e.g.][]{lau96} are statistical models expressing
conditional independence clauses among a collection of random variables $X_V = (X_v, v
\in V)$ indexed by a finite set $V$. A graphical model is determined by a graph
$G=(V,E)$ over the
indexing set $V$, and the edge set $E$ (which may include edges of undirected, directed
or bidirected type) encodes conditional independence
relations among the variables, or {\it Markov properties}. A joint probability
distribution $P$ for $X_V$ is Markov with respect to $G$ if it satisfies the
Markov properties expressed by $G$.

For non-empty subsets $A$ and $B$ of $V$ and a (possibly empty) subset $S$ of
$V$, we use the notation $A\ci B\cd S$ as shorthand for the conditional
independence relation $X_A\ci X_B\cd X_S$, thus identifying random variables
with their labels. When $S = \emptyset$ the independence relation is
intended as marginal independence.

For an undirected graph $G$ --- where all edges are undirected --- the  (global) Markov property expresses that $A\ci B\cd S$ when every path between $A$ and $B$ has a vertex in $S$ or, in other words, $S$ \emph{separates} $A$ from $B$ in $G$.

We shall be specifically interested in graphical models given by a \emph{bidirected graph} $G$ where all edges are  bidirected. For such graphs the (global) Markov property \citep{cox:wermuth:93,kau96,ric02,ric03}  expresses that

$A\ci B\cd S$ when every path between $A$ and $B$ has a vertex outside $S\cup A\cup B$, i.e.\ $V\setminus(A\cup B\cup S)$ separates $A$ from $B$. Note the obvious duality between this and the Markov property for undirected graphs.

For example, in the undirected graph of Figure \ref{fig:001}(a), 
the global Markov property implies that $\{i,l\}\ci k \cd j$ , whereas in the bidirected graph of Figure \ref{fig:001}(b), 
the global Markov property implies that $\{i,l\}\ci k$. Notice that, for simplicity, we write $k$ and $j$ instead of $\{k\}$ and $\{j\}$ as these are single elements.

\begin{figure}[htb]
\centering
\begin{tikzpicture}[node distance = 5mm and 5mm, minimum width = 3mm]
    \begin{scope}
      \tikzstyle{every node} = [shape = circle,
      font = \scriptsize,
      minimum height = 3mm,
      inner sep = 2pt,
      draw = black,
      fill = white,
      anchor = center],
      text centered]
      \node(i) at (0,0) {$i$};
      \node(j) [right = of i] {$j$};
      \node(l) [above = of j] {$l$};
			\node(k) [right = of j] {$k$};
      \node(i1) [right = 12mm of k] {$i$};
      \node(j1) [right = of i1] {$j$};
      \node(l1) [above = of j1] {$l$};
			\node(k1) [right = of j1] {$k$};
    \end{scope}
		
    \begin{scope}
    \tikzstyle{every node} = [node distance = 5mm and 3mm, minimum width = 5mm,
    font= \scriptsize,
      anchor = center,
      text centered]
\node(a) [below = 2mm  of j]{(a)};
\node(b) [below = 2mm  of j1]{(b)};

\end{scope}
    \begin{scope}
      \draw (i) -- (j);
      \draw (j) -- (l);
			\draw (j) -- (k);
    \end{scope}
    \begin{scope}[<->, > = latex']
    \draw (i1) -- (j1);
      \draw (j1) -- (l1);
			\draw (j1) -- (k1);
    \end{scope}

    \end{tikzpicture}
		\caption{{\footnotesize (a) An undirected dependence graph. (b) a bidirected dependence graph.}}\label{fig:001}
		\end{figure}
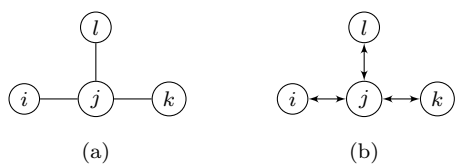

For a system of random variables $X$ we define the \emph{dependence skeleton} of
$X$, denoted by $\skel(X)$ to be the undirected graph with vertex set $V$ such
that vertices $u$ and $v$ are not adjacent if and only if there is \emph{some} subset $S$
of $V$ so that $u\cip v\cd S$.  Thus, if $X$ is Markov with respect to an
undirected graph $G$ --- such as the case considered by \cite{fra86}, who chose $S$ to be the set of remaining variables --- $\skel(X)$ would be a subgraph of $G$; whereas if $X$ is
Markov with respect to a bidirected graph, we may choose $S=\emptyset$ and the corresponding skeleton will be a
subgraph of the graph obtained by replacing all arcs
with lines.

\paragraph{Bidirected graphical models for binary variables} For the bidirected case, the exponential representation does not simplify. However, in the M\"obius parametrization the bidirected Markov property takes a particular simple form. Indeed \cite{drt08} showed that a distribution $P$ is Markov with respect to the bidirected graph $G=(V,E)$ if and only if
for any $B \subset V$ that is disconnected in $G$, the corresponding M\"obius parameter satisfies
\begin{equation}\label{eq:prod_rest}z_B=z_{C_1}\cdots z_{C_k},\end{equation}
where $C_1,\ldots,C_k$  partitions $B$ into inclusion maximal connected sets
(with respect to $G$).
Thus, if we let $\mathcal{C} = \mathcal{C}(G)$ be the set of all non-empty
connected subsets of $V$ (with respect to $G$),
the bidirected binary graphical model with graph $G$ is completely, injectively,
and smoothly parametrized by the M\"obius parameters $(z_C, C\in\mathcal{C})$.
More precisely we have that
\begin{equation}\label{model21}
    \mathbb{P}(X_H=1_H,X_{H^c}=0_{H^c})=\sum_{B\in \mathcal{B}:H\subseteq B}(-1)^{|B\setminus H|}\prod_{C\in \mathcal{C}_B}z_C,
\end{equation}
where $\mathcal{C}_B$ denotes the collection of inclusion maximal connected subsets of $B$.
Further, the M\"obius parameters vary freely save for the restrictions that
$z_\varnothing =1$ and the right-hand expressions in (\ref{eq:mobius}) and
(\ref{model21}) must be non-negative for all choices of $E\subseteq V$.
For example, for the bidirected graph of Figure \ref{fig:20} we have that
\begin{equation*}
    \mathbb{P}(X_1=1,X_2=0,X_3=0)=z_1-z_{12}-z_1z_3+z_{123},\quad \mathbb{P}(X_1=1,X_3=1)= z_1z_3.
\end{equation*}

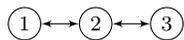
\begin{figure}[htb]
\centering
\begin{tikzpicture}[node distance = 5mm and 5mm, minimum width = 3mm]
    \begin{scope}
      \tikzstyle{every node} = [shape = circle,
      font = \scriptsize,
      minimum height = 3mm,
      inner sep = 2pt,
      draw = black,
      fill = white,
      anchor = center],
      text centered]
      \node(a) at (0,0) {$1$};
      \node(b) [right = of a] {$2$};
      \node(c) [right = of b] {$3$};
    \end{scope}

    \begin{scope}[<->, > = latex']
      \draw (a) -- (b);
      \draw (b) -- (c);

    \end{scope}

    \end{tikzpicture}
\caption{{\footnotesize A dependence graph with vertex set $\{1,2,3\}$.}}\label{fig:20}
\end{figure}

\cite{drt08} further show that the family of positive bidirected Markov distributions becomes {\it a curved
exponential family\/} due to the restriction (\ref{eq:prod_rest}) which is
non-linear  in the canonical parameters $\nu_B$ and log-linear in the mean value
parameters $z_B$ \citep{roverato:etal:13}.
Finally,  the dimension of the corresponding model is the cardinality of $\mathcal{C}$, the
number of connected induced subgraphs of $G$.

\section{Network models and exchangeability}\label{sec:5}

\subsection{Some terminology}
Given a finite or countably infinite node set $\nodeset$ ---
representing \emph{individuals} or \emph{actors} in a given population of
interest ---  we define a
\emph{random network} over $\nodeset$ to be a collection $X = (X_d, d\in
D(\nodeset) )$
of binary random variables taking values $0$ and $1$ indexed by a set
$D(\nodeset)$ of \emph{dyads}. Throughout the paper we take
$D(\nodeset)$ to be the set of all
unordered pairs $ij$ of nodes in $\nodeset$, and
nodes $i$ and $j$ are said to have
a \emph{tie}  if the random variable $X_{ij}$ takes the value $1$, and no tie
otherwise. 
Thus, a network is a random
variable taking value in $\{0,1 \}^{ { \nodeset \choose 2}  }$ and can therefore
be seen as a random {\it simple, undirected, labeled} graph with node set $\nodeset$,
whereby the ties form the random edges of the graphs.
We will write $\mathcal{G}_{\nodeset}$ for the set of all simple,
labeled undirected graphs on
$\nodeset$.

We use the terms
network, node, and tie rather than (random) graph, vertex, and (random) edge to differentiate
from the terminology used in the graphical model sense. Indeed, as we shall
discuss graphical models for networks, we will also consider each dyad $d$ as a
vertex in a graph $G=(D,E)$ representing the dependence structure of the random
variables associated with the dyads. 

\subsection{Exponential random graph models}

\emph{Exponential random graph models} \citep{frank:91,wasserman:pattison:96},
or ERGMs in short, are among the most important and popular statistical models
for network data and, as we will see, the set of exchangeable distributions form an ERGM.

ERGMs are exponential families of distributions \citep{barndorff:78} on
$\mathcal{G}_{\nodeset}$  whereby the probability of observing a network $x \in
\mathcal{G}_{\nodeset}$ can
be expressed as
\begin{equation}\label{eq:ergm}
    P_{\theta}(x) = \exp\Big\{\sum_{l=1}^ms_l(x)\theta_l-\psi(\theta)\Big\},
    \quad \theta \in \Theta \subseteq \mathbb{R}^m,
\end{equation}
where $s(x) = (s_1(x) \ldots, s_m(x)) \in \mathcal{R}^m$ are \emph{canonical sufficient
statistics} which capture some important feature of $x$, $\theta \in
\mathbb{R}^m$ is a point in the canonical parameter space $\Theta$,  and
$\psi \colon \Theta \rightarrow [0,\infty)$ is the log-partition function; in (\ref{eq:ergm}) we have thus chosen counting measure as the base measure of the representation.

The choice of the canonical sufficient statistics and any restriction imposed on
$\Theta$ will determine the properties of the corresponding ERGM. The simplest ERGM is the \emph{Erd\"{o}s--R\'{e}nyi} model \citep{erdos:renyi:60}, where there is only one parameter $\theta$, and the canonical sufficient statistic is the number of ties in $x$. This is equivalent to ties occurring independently with probability $p$ and $\theta=\log\{{p}/{(1-p)}\}$.

\emph{Beta models} are also ERGMs and they can be considered a generalization of the Erd\"{o}s-R\'{e}nyi model: in these models it is also assumed that ties occur independently; the probability $p_{ij}$ of a tie between nodes $i$ and $j$ is given as:
\begin{equation}\label{eq:beta}
    p_{ij}=\mathbb{P}(X_{ij}=1)=\frac{e^{\beta_i+\beta_j}}{1+e^{\beta_i+\beta_j}},\nn \forall ij\in D,
\end{equation}
where  $(\beta_i, i\in\nodeset)$ can be interpreted as  parameters that
determine the propensity of node $i$ to have edges. The probability  of a
network $x \in \mathcal{G}_{\nodeset}$ is thus
\begin{equation}\label{eq:betarep}
P_{\beta}(x)=\prod_{ij\in D}p_{ij}^{x_{ij}}(1-p_{ij})^{1-x_{ij}}=\prod_{ij\in D}\frac{e^{(\beta_i+\beta_j)x_{ij}}}{1+e^{(\beta_i+\beta_j)}}=
\exp\left\{\sum_{i\in\nodeset}d_i(x)\beta_i-\psi(\beta)\right\},
\end{equation}
where $(d_i(x), i\in \nodeset)$ is  the \emph{degree sequence} of $x$, i.e.\ $d_i(x)$ is the number of ties in $x$ involving node $i$.

Other ERGMs include, for example, the exponential family models of \cite{hol81} and their modifications  \citep{fie81}, as well as the Markov graphs of \cite{fra86}.

If $\nodeset'\subseteq \nodeset$, any probability distribution $P_\nodeset$ for
a network $X_{\nodeset} = (X_d, d \in D(\nodeset))$
 induces a distribution $P_{\nodeset'}$
 for the subnetwork $ X_{\nodeset'} = (X_d, d \in D(\nodeset'))$
 corresponding to the dyads in $D(\nodeset')$. In particular, $X_{\nodeset'}$ is the
 subgraph of $X_{\nodeset}$ induced by $\nodeset'$.
 One problem with ERGMs is that, typically, $P_{\nodeset'}$ needs not be the
 related to  $P_{\nodeset}$ in a meaningful way, and needs not be an ERGM itself. In that sense, the ERGMs are
 not  \emph{marginalizable}; see also \cite{snijders:10} as well as
 \cite{shalizi:rinaldo:13}.  The Erd\"os--R\'{e}nyi models and the beta models
 are marginalizable in this sense. Many other network models suffer
 from this issue: see, e.g., \cite{CD:15}. We shall  return to this and related issues in more detail in Section~\ref{sec:consistent}.

\subsection{Parametrizations of random network
models}\label{sec:network.parametrizations}

Since networks are arrays of binary variables, we may use the M\"{o}bius
parametrization and the exponential form  in \Cref{sec:mobius.param} to represent their distributions.

For a given finite node set $\nodeset$, we thus let
$\mathcal{B}(\nodeset)  = \mathcal{B}$ be the collection of all subsets of
$D(\nodeset)$. Now if $X$ is a network on $\nodeset$ and $B$ a non-empty set in
$\mathcal{B}$, then  $X_B = (X_d, d \in B)$ is the subnetwork
indexed by the
dyads in $B$. It will be convenient to identify each non-empty set $B \in \mathcal{B}$
with the  edge-induced subgraph of the complete graph on
$\nodeset$ comprised of the edges in $B$. Equivalently, $\mathcal{B} \setminus
\varnothing$ represents all subgraphs of the complete graph on
$\nodeset$ without isolated nodes. In particular, for a non-empty $B \in \mathcal{B}$,
we may equivalently  write the event $\{ X_B = 1_B \}$ as the event $\{ B
\subset X \}$ that the graph $B$ is
a subgraph of the random graph $X$.

The M\"{o}bius parameters corresponding to the distribution $P$ of a network $X$
are $(z_B, B \in \mathcal{B})$, where
\[
    z_B = \left\{
    \begin{array}{ll}
 \mathbb{P}( B \subseteq X) & \text{if } B \neq
	\varnothing\\
	1 & \text{otherwise.}\\
    \end{array}
    \right.
\]
 Thus, the M\"{o}bius parameters for a network
distribution are just the implied probabilities of observing all subgraphs of
the complete graph on $\nodeset$
without isolated nodes.
In the exponential family parametrization \eqref{model23}, the canonical
sufficient statistic $s$ is defined over graphs $\mathcal{G}_{\nodeset}$, and
the coordinate indexed by any non-empty $B \in \mathcal{B}$
 is simply the indicator function
\[s_B(x) = \left\{\begin{array}{ll}1&\mbox{if $B\subseteq x$}\\0&\mbox{otherwise}.\end{array}\right.\]

\begin{example}\label{ex:1}
Suppose that we observe the network $x$ in Figure \ref{fig:3} (a), where
$\nodeset = \{1,2,3,4\}$.

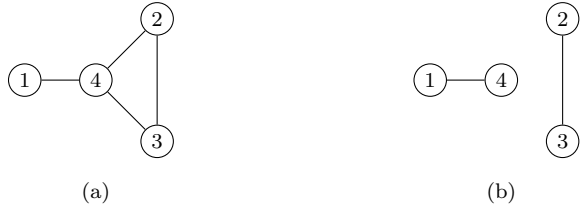
\begin{figure}[htb]
\centering
\begin{tikzpicture}[node distance = 5mm and 5mm, minimum width = 3mm]
    \begin{scope}
      \tikzstyle{every node} = [shape = circle,
      font = \scriptsize,
      minimum height = 3mm,
      inner sep = 2pt,
      draw = black,
      fill = white,
      anchor = center],
      text centered]
      \node(a) at (0,0) {$1$};
      \node(b) [right = of a] {$4$};
      \node(c) [above right= of b] {$2$};
			\node(d) [below right = of b] {$3$};
			
			\node(a1) [right = 40mm of b]{$1$};
      \node(b1) [right = of a1] {$4$};
      \node(c1) [above right= of b1] {$2$};
			\node(d1) [below right = of b1] {$3$};

    \end{scope}
		
    \begin{scope}
    \tikzstyle{every node} = [node distance = 5mm and 5mm, minimum width = 3mm,
    font= \scriptsize,
      anchor = center,
      text centered]
\node(a2) [below = 10mm  of b]{(a)};
\node(b2) [below = 10mm  of b1]{(b)};

\end{scope}
    \begin{scope}
      \draw (a) -- (b);
      \draw (b) -- (c);
			\draw (b) -- (d);
			\draw (c) -- (d);
			\draw (a1) -- (b1);
			\draw (c1) -- (d1);
    \end{scope}

    \end{tikzpicture}
  
  \caption[]{\small{Two observed networks with four nodes.}}
     \label{fig:3}
     
\end{figure}
For any
probability measure $P$ on $\mathcal{G}_\nodeset$, we have that, under both the M\"obius
parametrization and the canonical parametrization in (\ref{model23}),
\begin{equation}\label{eq:110}
\begin{split}
   P_{\nu}(x)=z_{14,23,24,34}-z_{12,14,23,24,34}-z_{13,14,23,24,34}+z_{12,13,14,23,24,34}\\= \exp\{\nu_{14}+\nu_{23}+\nu_{24}+\nu_{34}+\nu_{14,23}+\nu_{14,24}+\nu_{14,34}+\nu_{23,24}+\nu_{23,34}+\\\nu_{24,34}+\nu_{14,24,23}+\nu_{14,24,34}
+\nu_{14,34,23}+\nu_{13,14,34}+\nu_{14,23,24,34}-\psi(\nu)\},
\end{split}
\end{equation}
where $\nu$ is the unique $63$-dimensional vector parametrizing the distribution
of $X$.
Notice that the M\"obius parametrization is considerably simpler than the
exponential family parametrization in this  instance, since $x$ is a dense network and only parameters corresponding to supergraphs of the observed network enter into the calculation.
However, if we instead observe the sparser network $y \in \mathcal{G}_{\nodeset}$ in Figure~\ref{fig:3} (b), we get
\begin{equation}\label{eq:sparse}
\begin{split}
    P_{\nu}(y)=z_{14,23} -   z_{14,23,12}-z_{14,23,13}-z_{14,23,24}-z_{14,23,34}\\
+z_{14,23,12,13} +z_{14,23,12,24}+z_{14,23,12,34}+z_{14,23,13,34}+z_{14,23,13,24}+z_{14,23,24,34}\\-z_{12,14,23,24,34}-z_{13,14,23,24,34}+z_{12,13,14,23,24,34}\\=\exp\{\nu_{14}+\nu_{23}+\nu_{14,23}-\psi(\nu)\}.
\end{split}
\end{equation}

Thus the exponential form is simpler for the sparse case, save for the fact that
the log-partition function $\psi(\nu)$ is complicated.
\end{example}

\subsection{Exchangeability}

We are concerned with probability distributions on networks that are
{\it exchangeable:}  invariant under relabelings of the node set $\nodeset$.
Exchangeability is a well-known form of probabilistic invariance, and, from a modeling
perspective, also a natural simplifying assumption to impose when formaliziing
statistical models for networks. Below we summarize
well-known facts on exchangeability.

A distribution $P$ of a random  array $X=(X_{ij})_{i,j\in \nodeset}$  over a finite nodeset $\nodeset$ is said to be \emph{weakly exchangeable} (WE) \citep{silverman:76,eagleson:weber:78} if  for all permutations $\pi\in S(\nodeset)$ we have that
\begin{equation}
    \mathbb{P}\{(X_{ij}=x_{ij})_{i,j\in \nodeset}\}=\mathbb{P}\{(X_{ij}=x_{\pi(i)\pi(j)})_{i,j\in \nodeset}\}.
\end{equation}
If the array $X$ is symmetric --- i.e.\ $X_{ij}=X_{ji}$, we say it is \emph{symmetric weakly exchangeable} (SWE). In the following we shall for brevity say that $X$ is WE, SWE, etc.\ in the meaning that its distribution is.

A symmetric binary array with zero diagonal can be interpreted as a matrix of ties (adjacency matrix) of a random network and thus the above concepts can be translated into networks. A random network is \emph{exchangeable} if its adjacency matrix is  SWE.  Then
 a random network is exchangeable if and only if its distribution is invariant
 under relabeling of the nodes of the network. We will distinguish the case of a
 finite node set  from that of a (countably) infinite one, and in the former case we will speak of {\it finite
 exchangeability}.
A random network $X$ over a countably infinite nodeset $\nodeset$ is said to be exchangeable,
if every finite induced subnetwork $X_{\nodeset'}$ for $\nodeset'\subset \nodeset$ is.

When $\nodeset$ is infinite,  exchangeability is well-understood.
Indeed, any doubly infinite random SWE array can be represented as a mixture of
so-called \emph{$\phi$-matrices} \citep{diaconis:freedman:81} in the following
way. 
 Let $\mathcal{P}_{\infty}$ be the convex set of all exchangeable distributions for networks with a countably infinite number of nodes and let $\mathcal{E}_{\infty}$ denote the extreme points of that set. Then we have:
\begin{prop}\label{prop:1n} $P\in \mathcal{E}_{\infty}$ if and only if for every
    finite $\nodeset' \subset \nodeset$,
\begin{equation}\label{eq:21}
    P_{\nodeset'}(x)=\int_{[0,1]^\nodeset}\prod_{ij\in
    D(\nodeset')}\phi(u_i,u_j)^{x_{ij}}\{1-\phi(u_i,u_j)\}^{1-{x_{ij}}}\,du,
    \quad  \forall x \in \mathcal{G}_{\nodeset'},
\end{equation}
where $P_{\nodeset'}$ is the induced probability on $\mathcal{G}_{\nodeset'}$
and $\phi$ is a measurable function from $[0,1]^2$ to $[0,1]$. The function $\phi$ is unique up to a measure-preserving transformation of the unit interval.
\end{prop}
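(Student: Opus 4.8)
The plan is to recognize Proposition~\ref{prop:1n} as the specialization, to symmetric $\{0,1\}$-valued exchangeable arrays, of the Aldous--Hoover representation theorem for jointly exchangeable arrays \citep{aldous:81,kallenberg:05}, combined with the standard fact that the extreme points of a simplex of group-invariant probability measures are exactly the ergodic ones. So I would invoke Aldous--Hoover as a black box for the ``only if'' direction, and supply the elementary arguments for the ``if'' direction and for the identification of extreme points with the single-function representations. Throughout, write $P^\phi$ for the exchangeable law of a network whose finite marginals $P^\phi_{\nodeset'}$ are given by the right-hand side of \eqref{eq:21}.

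For the ``if'' direction, suppose $P$ has finite marginals $P^\phi_{\nodeset'}$. Exchangeability is immediate: for $\pi\in S(\nodeset')$ the change of variables $u\mapsto u\circ\pi$ preserves the product Lebesgue measure $du$ on $[0,1]^{\nodeset'}$ and carries the integrand for $x$ into the integrand for the relabelled configuration, so $P^\phi_{\nodeset'}$ is SWE and hence $P^\phi\in\mathcal{P}_\infty$. To see that $P^\phi$ is extreme, note first that it is \emph{dissociated}: if $A,B\subseteq D(\nodeset)$ involve disjoint sets of nodes, then $X_A$ and $X_B$ depend, under $P^\phi$, on disjoint subfamilies of the i.i.d.\ latent variables $(u_i)$ and are therefore independent. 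For exchangeable arrays, dissociatedness is equivalent to ergodicity under the action of the (finitary) symmetric group, and the extreme points of $\mathcal{P}_\infty$ are precisely the ergodic measures \citep[see][]{aldous:85,kallenberg:05}; hence $P^\phi\in\mathcal{E}_\infty$.

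For the ``only if'' direction, let $P\in\mathcal{E}_\infty$. By the Aldous--Hoover theorem every symmetric exchangeable $\{0,1\}$-array $X=(X_{ij})$ admits a representation $X_{ij}=f(\alpha,\xi_i,\xi_j,\xi_{\{i,j\}})$ with $\alpha$, $(\xi_i)$, $(\xi_{\{i,j\}})$ i.i.d.\ uniform on $[0,1]$ and $f$ symmetric in its two middle arguments; integrating out the cell-level noise gives, with $\phi_\alpha(s,t)=\mathbb{P}(X_{ij}=1\mid\alpha,\xi_i=s,\xi_j=t)$, that conditionally on $\alpha$ the array is exactly a $\phi_\alpha$-matrix, so $P=\int_{[0,1]}P^{\phi_\alpha}\,d\alpha$ exhibits $P$ as a barycenter of measures in $\mathcal{P}_\infty$ over the (measurably parametrized) family $\{P^{\phi_\alpha}\}$. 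Since $P$ is extreme, this mixture must be degenerate: if the law of $\alpha\mapsto P^{\phi_\alpha}$ were not a point mass, one could choose a measurable $C\subseteq[0,1]$ with $0<|C|<1$ and write $P$ as a nontrivial convex combination of the conditional barycenters $|C|^{-1}\int_C P^{\phi_\alpha}\,d\alpha$ and $|C^c|^{-1}\int_{C^c}P^{\phi_\alpha}\,d\alpha$, forcing these to coincide for every such $C$ and hence $P^{\phi_\alpha}=P$ for a.e.\ $\alpha$. Taking $\phi=\phi_\alpha$ for any such $\alpha$ yields \eqref{eq:21}. The asserted uniqueness of $\phi$ up to a measure-preserving transformation of $[0,1]$ is precisely the graphon-equivalence statement of \cite{diaconis:janson:08} \citep[cf.\ the ``weak isomorphism'' results in][]{lov12}, which I would simply cite.

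The main obstacle is that the ``only if'' direction rests entirely on the Aldous--Hoover representation, which is genuinely deep and which I treat as given; the remaining work --- verifying exchangeability and dissociatedness of $P^\phi$, and the splitting argument reducing an extreme point to a single $\phi_\alpha$ --- is routine, the only mild technical point being the measurability of $\alpha\mapsto P^{\phi_\alpha}$ needed to disintegrate $P$ as a barycenter.
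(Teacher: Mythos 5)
The paper states Proposition~\ref{prop:1n} without proof, presenting it as a known representation theorem quoted from the literature on exchangeable arrays and graph limits; your sketch correctly reconstructs the standard argument behind those citations --- Aldous--Hoover for the existence of the latent-variable representation, the dissociatedness/ergodicity characterization of extreme points (which the paper itself records in the proposition immediately following), and the Diaconis--Janson weak-isomorphism result for the uniqueness of $\phi$ up to measure-preserving transformations. Since the genuinely deep input (Aldous--Hoover) is treated as a black box both in your write-up and, implicitly, by the paper, there is no substantive divergence to report.
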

In a graph theoretic context, (equivalence classes of) $\phi$-matrices are also known as \emph{graphons} \citep{lov06}.

Elements of $\mathcal{E}_{\infty}$ as given in (\ref{eq:21}) are all
\emph{dissociated} \citep{silverman:76} in the sense that  $\{X_{ij}, i,j\in
\nodeset'\}$ are independent of $\{X_{kl}, k,l\in \nodeset''\}$ whenever
$\nodeset'\cap\nodeset''=\emptyset$ in fact it holds \citep{aldous:81,aldous:85} that
\begin{prop}
$P\in \mathcal{E}_{\infty}$  if and only if $P$ is exchangeable and dissociated.
\end{prop}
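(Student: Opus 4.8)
The plan is to prove the two implications separately. The forward one, $(\Rightarrow)$, is essentially a Fubini computation from the representation \eqref{eq:21}. Since $\mathcal{E}_\infty\subseteq\mathcal{P}_\infty$, an element $P\in\mathcal{E}_\infty$ is automatically exchangeable, so only dissociatedness must be checked. Fix disjoint finite $\nodeset_1,\nodeset_2\subset\nodeset$, set $\nodeset'=\nodeset_1\cup\nodeset_2$, and write a configuration on $\nodeset'$ as $(x^1,x^0,x^2)$, the dyads inside $\nodeset_1$, the cross-dyads, and the dyads inside $\nodeset_2$. Applying \eqref{eq:21} to $\nodeset'$ and summing over $x^0$, each cross factor contributes $\phi(u_i,u_k)+\{1-\phi(u_i,u_k)\}=1$, so $\mathbb{P}(X^1=x^1,X^2=x^2)=\int f_1\big((u_i)_{i\in\nodeset_1}\big)\,f_2\big((u_k)_{k\in\nodeset_2}\big)\,du$ for the obvious products $f_1,f_2$. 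Because $(u_i)_{i\in\nodeset_1}$ and $(u_k)_{k\in\nodeset_2}$ are independent under the uniform product measure, Fubini factorizes the integral, and a second application of \eqref{eq:21} (to $\nodeset_1$ and to $\nodeset_2$) identifies the two factors as $\mathbb{P}(X^1=x^1)$ and $\mathbb{P}(X^2=x^2)$. This gives dissociatedness for finite index sets; the case of arbitrary disjoint $\nodeset',\nodeset''$ follows since independence of the generated $\sigma$-fields is implied by independence of all finite subcollections.

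For $(\Leftarrow)$, assume $P$ is exchangeable and dissociated; I will show $P$ is an extreme point of $\mathcal{P}_\infty$, i.e.\ $P\in\mathcal{E}_\infty$. The key step is a zero--one law: the tail $\sigma$-field $\mathcal{T}=\bigcap_{m\ge1}\sigma(X_{ij}:i,j>m)$ is $P$-trivial. Fix $E\in\mathcal{T}$ and, using that the finite-dimensional cylinder events form a generating algebra, pick $E_n\in\sigma(X_{ij}:i,j\le n)$ with $\mathbb{P}(E\triangle E_n)\to0$. By dissociatedness applied to the disjoint sets $\{1,\dots,n\}$ and $\{n+1,n+2,\dots\}$, the $\sigma$-fields $\sigma(X_{ij}:i,j\le n)$ and $\sigma(X_{ij}:i,j>n)$ are independent, and since $E\in\mathcal{T}\subseteq\sigma(X_{ij}:i,j>n)$ we obtain $\mathbb{P}(E\cap E_n)=\mathbb{P}(E)\mathbb{P}(E_n)$; letting $n\to\infty$ yields $\mathbb{P}(E)=\mathbb{P}(E)^2$, so $\mathbb{P}(E)\in\{0,1\}$. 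Next, for each finite graph $F$ the empirical densities $t_{\mathrm{inj}}(F,X_{[n]})$ (writing $[n]=\{1,\dots,n\}$) converge $P$-a.s.\ as $n\to\infty$ --- a standard reverse-martingale property of $U$-statistics of exchangeable arrays \citep{aldous:85,kallenberg:05} --- and since dropping a fixed finite set of nodes perturbs the density by $O(1/n)$, the limit $t(F,X)$ also equals $\lim_n t_{\mathrm{inj}}(F,X_{\{m+1,\dots,n\}})$ and is therefore $\sigma(X_{ij}:i,j>m)$-measurable for every $m$, hence $\mathcal{T}$-measurable. By the zero--one law $t(F,X)$ is $P$-a.s.\ constant, and by bounded convergence together with the identity $\E_P[t_{\mathrm{inj}}(F,X_{[n]})]=z_F$ (by exchangeability a fixed injection $V_F\hookrightarrow[n]$ is a homomorphism with probability $\mathbb{P}(F\subseteq X)=z_F$, and there are $(n)_{|V_F|}$ such injections) that constant is the M\"obius parameter $z_F$ of $P$.

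To conclude, invoke the ergodic decomposition of exchangeable network laws --- of which Proposition~\ref{prop:1n} describes the extreme elements \citep{aldous:81,aldous:85} --- to write $P=\int_{\mathcal{E}_\infty}Q\,\mu(dQ)$. Each extremal $Q$ is dissociated by the first part of the proof, so the same zero--one law makes $t(F,X)$ the $Q$-a.s.\ constant $z_F(Q)$; conditioning the mixture on $Q$ then shows that, under $P$, $t(F,X)=z_F(Q)$ with $Q\sim\mu$. Comparing with the $P$-a.s.\ constant value $z_F$ forces $z_F(Q)=z_F$ for $\mu$-a.e.\ $Q$ and every $F$, so $\mu$-a.e.\ $Q$ has the same M\"obius parameters as $P$. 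Since the M\"obius parameters determine the distribution (Section~\ref{sec:mobius.param}), $\mu$ is a point mass at $P$, whence $P\in\mathcal{E}_\infty$.

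The main obstacle is the reverse direction, and within it the zero--one law: the essential subtlety is that dissociatedness yields independence of the edges \emph{within} $\{1,\dots,n\}$ from the edges \emph{within} $\{n+1,n+2,\dots\}$ but says nothing about the cross-edges, so the argument must be routed through the tail in the \emph{node} index rather than through any single-edge tail field. The remaining ingredients --- a.s.\ convergence and tail-measurability of the subgraph densities, and the passage from their $P$-a.s.\ constancy to degeneracy of the mixing measure via the M\"obius parametrization --- are then routine. An alternative to the last step would identify $(z_F)_F$ with the homomorphism densities of a graphon and invoke their uniqueness \citep{lov06,diaconis:janson:08}, but routing it through the M\"obius parameters keeps the argument internal to the paper.
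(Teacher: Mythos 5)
The paper does not actually prove this proposition: it states it as a known result and cites \cite{aldous:81} and \cite{aldous:85}, so there is no in-paper argument to compare against. Judged on its own, your proof is correct and follows the standard lines of the Aldous--Hoover theory. The forward direction (Fubini on the representation \eqref{eq:21}, summing out the cross-dyads) is exactly right. For the converse, your route --- tail triviality in the \emph{node} index via a Kolmogorov-type argument from dissociatedness, a.s.\ convergence and tail-measurability of $t_{\mathrm{inj}}(F,X_{[n]})$, and then degeneracy of the mixing measure in the ergodic decomposition read off through the M\"obius parameters --- is sound, and you correctly flag the one genuine subtlety (dissociatedness says nothing about the cross-edges, so the tail must be taken in the node index). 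Two remarks. First, there is a shorter proof of the converse that avoids both the zero--one law and the reverse-martingale input: writing $P=\int Q\,\mu(dQ)$ over extremals, take disjoint node sets $\nodeset_1,\nodeset_2$ and labeled copies $B_1\subseteq D(\nodeset_1)$, $B_2\subseteq D(\nodeset_2)$ of the same $F$; dissociatedness of $P$ gives $z_{B_1\cup B_2}=z_F^2$, while the mixture and the (already proved) dissociatedness of each $Q$ give $z_{B_1\cup B_2}=\int z_F(Q)^2\,\mu(dQ)\ge\bigl(\int z_F(Q)\,\mu(dQ)\bigr)^2=z_F^2$, with equality iff $z_F(\cdot)$ is $\mu$-a.s.\ constant; ranging over countably many $F$ and using that the M\"obius parameters determine the law forces $\mu$ to be degenerate. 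Second, a cosmetic point: your approximating events $E_n$ live in the generating algebra $\bigcup_m\sigma(X_{ij}:i,j\le m)$, so strictly one should let the node cutoff depend on $n$; this changes nothing. Both your argument and the shorter one rely on the existence of the ergodic decomposition over $\mathcal{E}_\infty$, which the paper itself takes from \cite{aldous:85}, so that reliance is consistent with the paper's framework.
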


We emphasize that the representation of exchangeable distributions on networks by means of
$\phi$-matrices requires an infinite node set $\nodeset$. If $\nodeset$ is
finite, a finitely exchangeable network on $\nodeset$ needs not have a
representation as a mixture of $\phi$-matrices,
and need not be {\it extendable}, i.e.\ the induced probability measure from a
probability distribution over networks on a larger node set. In fact, the properties of
finitely exchangeable networks are distinctively different; see for example \cite{dia77,diaconis:freedman:80,Mat95,ker06}; and \cite{KL:15}.
We will return to  the issue of extendability later in \Cref{sec:consistent} and
focus instead on the properties of finitely exchangeable network distributions.

\subsection{Exponential representation of exchangeable networks}\label{sec:exch.parametrization}

For exchangeable random networks on a finite  nodeset $\nodeset$, both the M\"obius and the
exponential parameters simplify. More precisely, $P$ is an exchangeable
distribution on $\mathcal{G}_{\nodeset}$ if and only if $P(x) =
P(x')$ whenever $x\iso x'$, i.e.\ whenever $x$ and $x'$ are identical save
for a permutation of their labels.  Thus,
it follows that a random network $X$ is exchangeable if and only if $z_B= z_{B'}$
whenever $B\iso B'$, where $z_B= \mathbb{P}(B\subseteq X)$.

Next we let $\mathcal{U}_\nodeset$ denote the set of all unlabeled graphs
 on $\nodeset$ and write $\varnothing$ for the empty graph. Each $U \in
 \mathcal{U}_{\nodeset}$ can be identified with an  isomorphism class in
 $\mathcal{G}_{\nodeset}$,  which we will denote with $[U]$. 
Then, any exchangeable distribution $P$ on
$\mathcal{G}_{\nodeset}$ is parametrized by $\{ z_U, U\in
    \mathcal{U}_\nodeset\}$, with $z_{\varnothing} = 1$, where $z_U =
    \mathbb{P}\left( B \subseteq X \right)$, for any $B \in [U]$. Collecting identical terms in (\ref{eq:mobius}) we obtain
\begin{equation}\label{eq:mobexch}
    \mathbb{P}(X = x)=\sum_{U\in \mathcal{U}_\nodeset}(-1)^{|U|-|x|}r_U(x)z_U,
    \quad x \in \mathcal{G}_{\nodeset}
\end{equation}
where $|U|$ is the number of edges in $U$ and $r_U(x)$ is the number of graphs
in $[U]$ that contain $x$ as a subgraph. 

Similarly, it holds for the exponential representation \eqref{model23} that a
probability distribution on $\mathcal{G}_{\nodeset}$ is exchangeable if and only
if for all $B$ and $B=B'$ in $D(\nodeset)$ with $B\iso B'$, the corresponding
canonical parameters satisfy $\nu_B=\nu_{B'}$. Thus we can represent the family
of exchangeable network distributions on $\nodeset$ as the exponential family of
probability distributions of the form
\begin{equation}\label{eq:expexch}
    P_{\nu}(x)=\exp \Big\{\sum_{U\in\mathcal{U}_\nodeset \setminus
\varnothing}\sigma_U(x)\nu_U-\psi(\nu) \Big\}, \quad x \in \mathcal{G}_{\nodeset},
\end{equation}
for any choice of the canonical parameters $\nu = \left( \nu_U, U \in
\mathcal{U}_{\nodeset} \setminus \varnothing \right) \in \mathbb{R}^{
    \mathcal{U}_{\nodeset} \setminus \varnothing}$,
    where $\psi(\cdot)$ is  the log-partition function and $$\sigma_U(x)= \sum_{B \in [U]}s_{B}(x)$$ is the number of graphs
    in the isomorphism class corresponding to $U$  that are subgraphs of $x$. Indeed from (\ref{eq:subcount}) we have that
\begin{equation}\label{eq:suffbyinj}\sigma_U(x)=\inj(U,x)/\inj(U,U).
\end{equation}

Note that the set of exchangeable distributions again form a linear and regular
exponential family with canonical sufficient statistics
$(\sigma_U, U \in \mathcal{U}_{\nodeset} \setminus \varnothing)$,
canonical parameters $(\nu_U, U\in \mathcal{U}_\nodeset \setminus \varnothing)$, and mean value parameters
\begin{equation}\label{eq:meanvalue}\tau_U= \E\{\sigma_{U}(X)\}=  \sum_{B \in
    [U]}z_U=\sub(U,D)z_U=\inj(U,D)z_U/\inj(U,U) \end{equation}
for $U \in
    \mathcal{U}_{\nodeset} \setminus \varnothing$. In other words, \emph{the family of finitely exchangeable network distributions is an ERGM}.

By standard theory of exponential families \citep{barndorff:78}, upon observing
a network $x \in \mathcal{G}_{\nodeset}$, the maximum likelihood estimate of the M\"{o}bius parameters $\left(
z_U, U\in \mathcal{U}_\nodeset \setminus \varnothing \right)$ under the assumption
of exchangeability is obtained by equating the observed canonical statistic to
its expectation. As a result, the MLE of $z_U$, for any non-empty $U$ is
\begin{equation}\label{eq:zmle}\hat z_U= \sigma_U(x)/\sub(U,D)=\inj(U,x)/\inj(U,D)=t_{\mathrm{inj}}(U,x).\end{equation}
Strictly speaking,  this is not the maximum likelihood estimate in the
exponential family, but rather its \emph{completion} \citep{barndorff:78}
obtained by including all limit points of the set of mean value parameters. In
fact, the canonical parameters $\nu_U$ are not estimable based on  observation of a single  network.
Note that, interestingly, \emph{the maximum likelihood estimates of the M\"{o}bius parameters  are exactly the injective homomorphism densities introduced in Section
\ref{sec:prelim}.} For an extreme and infinitely exchangeable network (the dissociated case), these estimates are also well known to be consistent  \cite[Theorem 2.5]{lov06}.

\begin{example}\label{ex:exch}
Consider the networks $x$ and $y$ in Figure~\ref{fig:3}.
Using the exponential representation (\ref{eq:expexch}) we see --- with a notation that hopefully is transparent
--- that the probability of observing $x$ is
\begin{equation}\label{eq:11}
P_\nu(x) = \exp\left\{4\nu_{\n{\vcenter{\hbox{\includegraphics[scale=0.05]{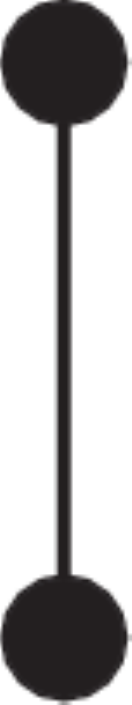}}}}}+5\nu_{\n{\vcenter{\hbox{\includegraphics[scale=0.05]{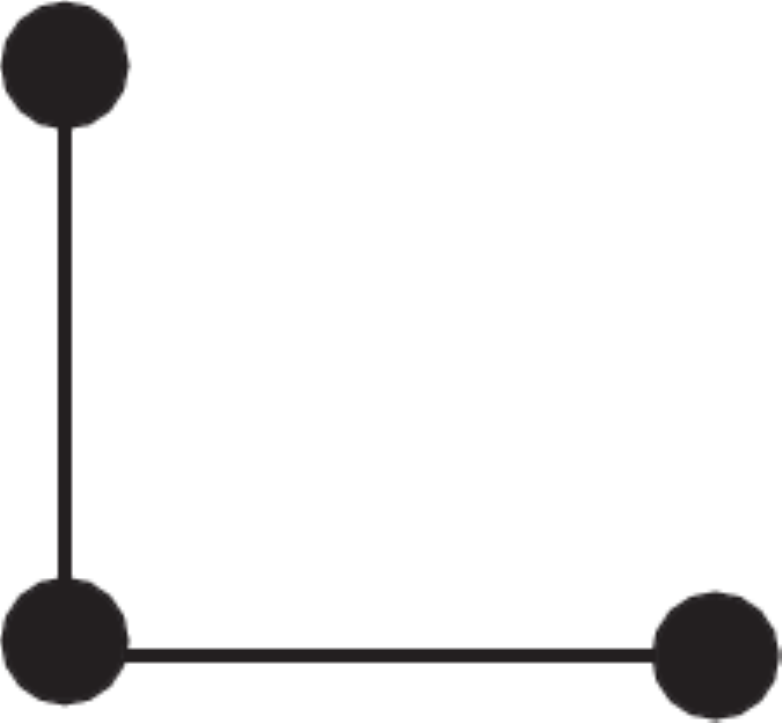}}}}}+\nu_{\n{\vcenter{\hbox{\includegraphics[scale=0.05]{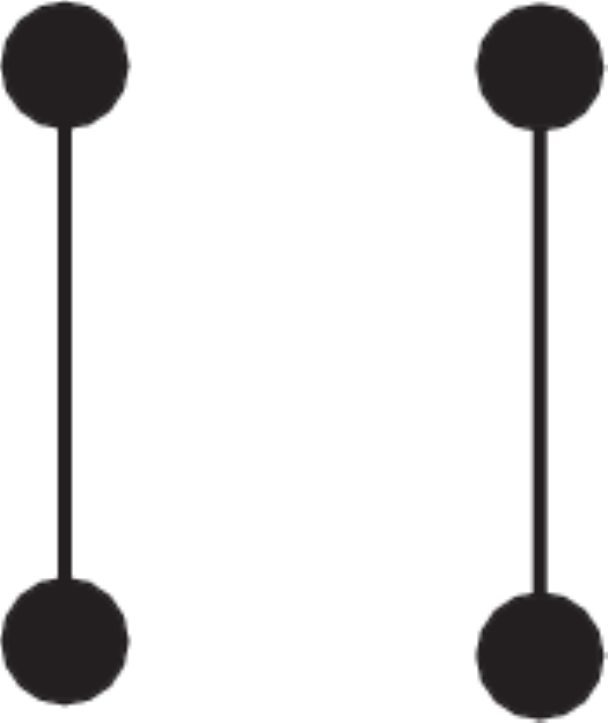}}}}}+\nu_{\n{\vcenter{\hbox{\includegraphics[scale=0.05]{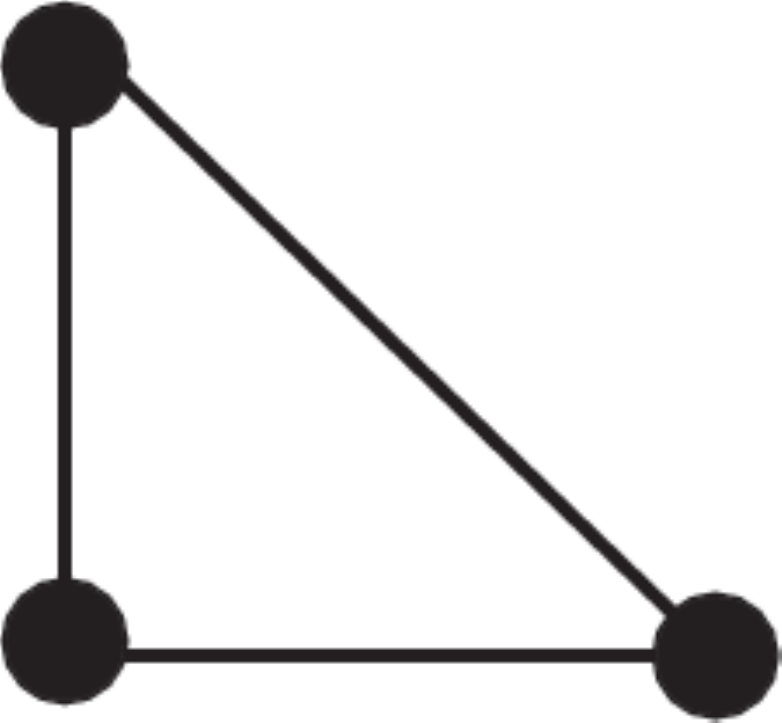}}}}}+\nu_{\n{\vcenter{\hbox{\includegraphics[scale=0.05]{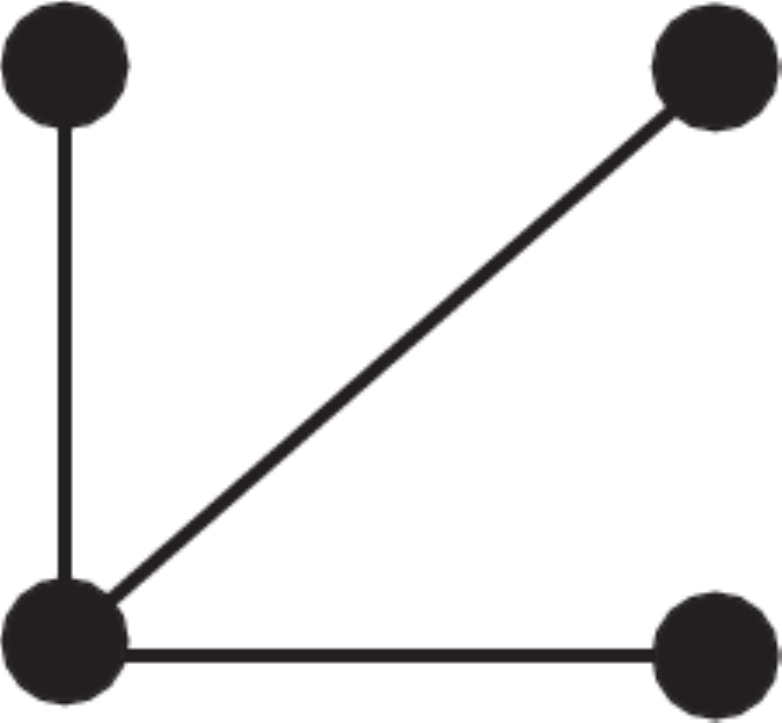}}}}}+2\nu_{\n{\vcenter{\hbox{\includegraphics[scale=0.05]{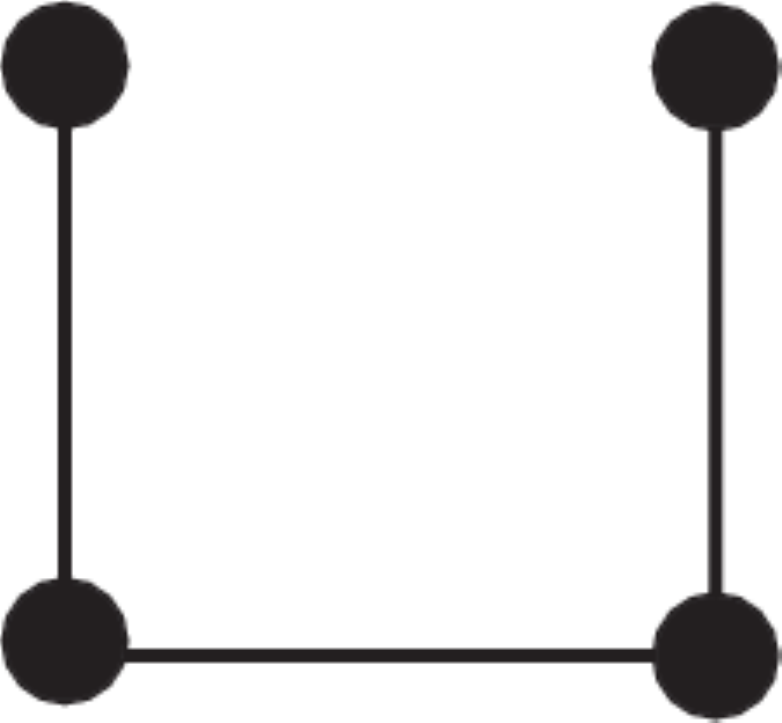}}}}}+\nu_{\n{\vcenter{\hbox{\includegraphics[scale=0.05]{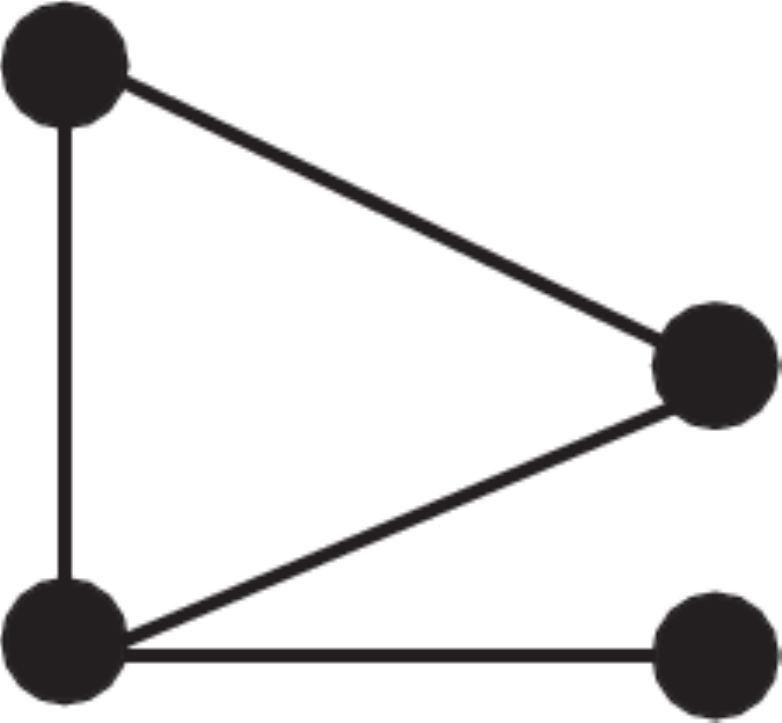}}}}}-\psi(\nu)\right\}.
\end{equation}

In terms of the M\"obius parametrization the expression simplifies to
\begin{equation}\label{eq:xmobexch}
    P_{\nu}(x) =z_{\n{\vcenter{\hbox{\includegraphics[scale=0.05]{figureex2-7.pdf}}}}}-2z_{\n{\vcenter{\hbox{\includegraphics[scale=0.05]{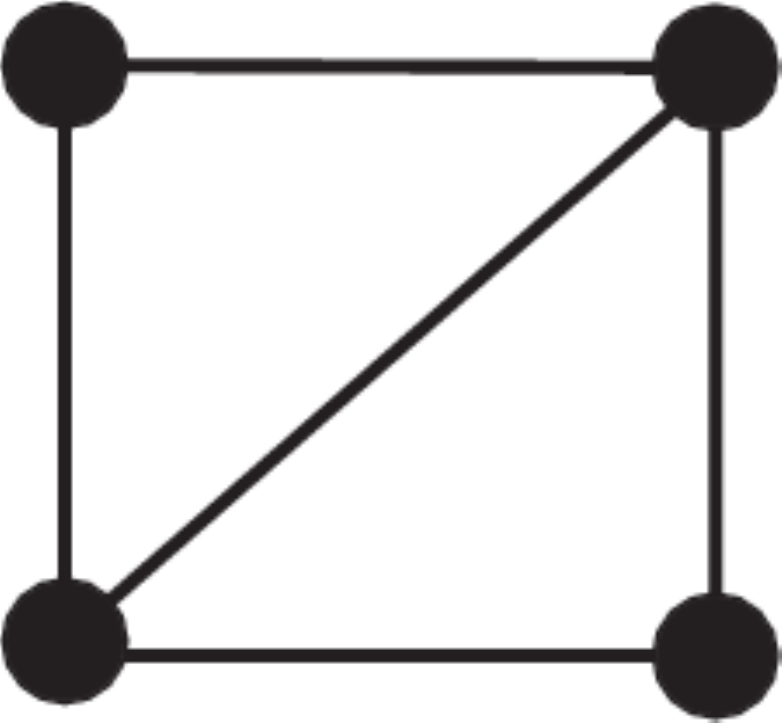}}}}}+z_{\n{\vcenter{\hbox{\includegraphics[scale=0.05]{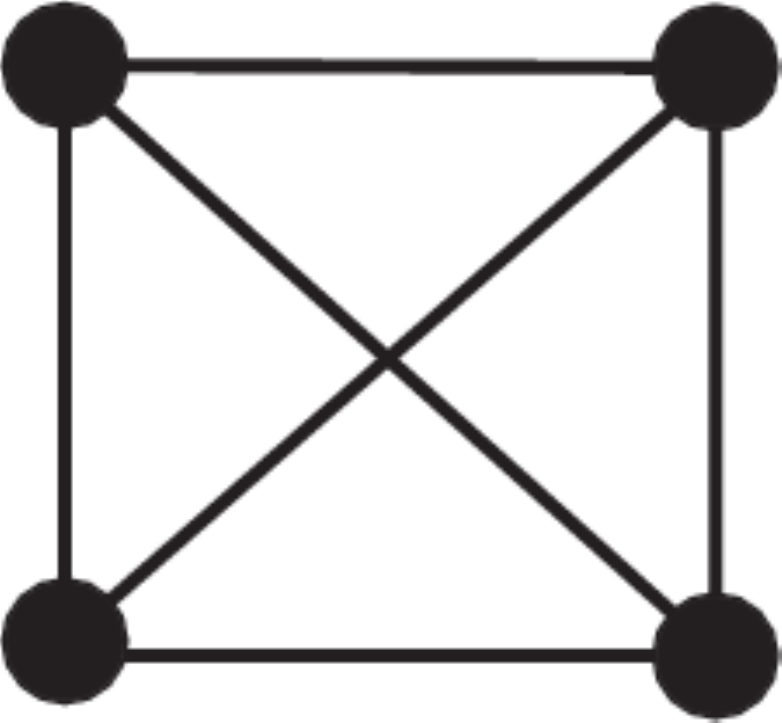}}}}}.\end{equation}
If $x$ is observed, the non-zero estimates of M\"obius parameters for $U\neq
\varnothing$ are
\begin{equation}\label{eq:xexchmle}\begin{split}\hat z_{\n{\vcenter{\hbox{\includegraphics[scale=0.05]{figureex2-1.pdf}}}}}=2/3,\; \hat z_{\n{\vcenter{\hbox{\includegraphics[scale=0.05]{figureex2-2.pdf}}}}}=5/12,\;\hat z_{\n{\vcenter{\hbox{\includegraphics[scale=0.05]{figureex2-3.pdf}}}}}=1/3,\;\hat z_{\n{\vcenter{\hbox{\includegraphics[scale=0.05]{figureex2-4.pdf}}}}}=1/4,\\ \hat z_{\n{\vcenter{\hbox{\includegraphics[scale=0.05]{figureex2-5.pdf}}}}}=1/4,\;\hat z_{\n{\vcenter{\hbox{\includegraphics[scale=0.05]{figureex2-6.pdf}}}}}=1/6,\;\hat z_{\n{\vcenter{\hbox{\includegraphics[scale=0.05]{figureex2-7.pdf}}}}}=1/12;\end{split}\end{equation}
These numbers are obtained from (\ref{eq:zmle}) by calculating, for example, as $$\inj(\n{\vcenter{\hbox{\includegraphics[scale=0.05]{figureex2-4.pdf}}}},x)= 3!=6, \quad\inj(\n{\vcenter{\hbox{\includegraphics[scale=0.05]{figureex2-4.pdf}}}}, D)= 4\times 3\times 2 =24.$$
Note that the canonical parameters in (\ref{eq:11})
are not
estimable in this case as the set of sufficient statistics $(\sigma_U(X), U\in
\mathcal{U}_\nodeset \setminus \varnothing) $ are  at the  boundary of their convex support.
\end{example}

\section{Independence structures for exchangeable
networks}\label{sec:indep.structures}
In this section we will investigate the Markov properties of exchangeable
network distributions on a finite node set.
\subsection{Possible independence structures}
Recall that we use the term \emph{incidence graph} $L(\nodeset)$ for the line
graph of a complete graph on $\nodeset$, whereby $L(\nodeset)$ has edges between
dyads which are \emph{incident} i.e.\ dyads having a node in common.
Figure~\ref{fig:2} displays the incidence undirected graph  for $\nodeset = \{1,2,3,4\}$ and a bidirected version $L_{\leftrightarrow}(\nodeset)$ of the same.
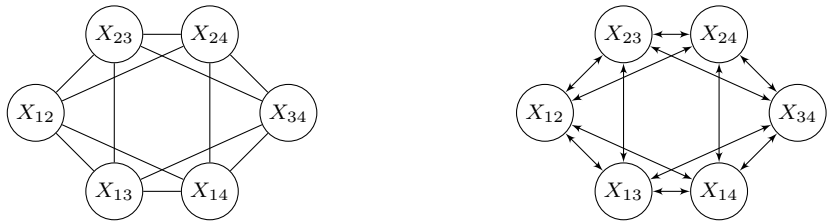
\begin{figure}[htb]
\centering
\begin{tikzpicture}[node distance = 5mm and 5mm, minimum width = 3mm]
    \begin{scope}
      \tikzstyle{every node} = [shape = circle,
      font = \scriptsize,
      minimum height = 3mm,
      inner sep = 2pt,
      draw = black,
      fill = white,
      anchor = center],
      text centered]
      \node(12) at (0,0) {$X_{12}$};
      \node(13) [below right = of 12] {$X_{13}$};
      \node(23) [above right = of 12] {$X_{23}$};
			\node(24) [right = of 23] {$X_{24}$};
      \node(14) [right = of 13] {$X_{14}$};
      \node(34) [below right = of 24] {$X_{34}$};
			
			\node(a12) [right = 6cm of 12] {$X_{12}$};
      \node(a13) [below right = of a12] {$X_{13}$};
      \node(a23) [above right = of a12] {$X_{23}$};
			\node(a24) [right = of a23] {$X_{24}$};
      \node(a14) [right = of a13] {$X_{14}$};
      \node(a34) [below right = of a24] {$X_{34}$};
    \end{scope}

    \begin{scope}
      \draw (12) -- (13);
			\draw (12) -- (14);
			\draw (12) -- (23);
			\draw (12) -- (24);
      \draw (13) -- (23);
			\draw (13) -- (14);
			\draw (13) -- (34);
			\draw (23) -- (24);
			\draw (23) -- (34);
			\draw (14) -- (24);
			\draw (24) -- (34);\draw (14) -- (34);

    \end{scope}

    \begin{scope}[<->, > = latex']
      \draw (a12) -- (a13);
			\draw (a12) -- (a14);
			\draw (a12) -- (a23);
			\draw (a12) -- (a24);
      \draw (a13) -- (a23);
			\draw (a13) -- (a14);
			\draw (a13) -- (a34);
			\draw (a23) -- (a24);
			\draw (a23) -- (a34);
			\draw (a14) -- (a24);
			\draw (a24) -- (a34);\draw (a14) -- (a34);

    \end{scope}

    \end{tikzpicture}
		\caption{\small{(a) The incidence graph for $\nodeset
		    \{1,2,3,4\}$. (b) The bidirected incidence graph with $4$ nodes.}}
     \label{fig:2}
		\end{figure}
 In fact, we show next that the skeleton graph of a finitely
exchangeable network distribution can only be one of four possible types.

\begin{prop}
    \label{prop:skeleton}
The dependence skeleton $\skel(X)$ of an exchangeable random network $X$ is one of the following:
\begin{enumerate}[\rm(a)]
  \item the empty graph;
  \item the incidence graph $L(\nodeset)$ ;
  \item the complement of the incidence graph $L^c(\nodeset)$ ;
  \item the complete graph.
\end{enumerate}
\end{prop}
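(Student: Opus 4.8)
The plan is to exploit the heavy symmetry forced by exchangeability on the dependence skeleton $\skel(X)$. The key observation is that $\skel(X)$ is an undirected graph on the vertex set $D(\nodeset)$ of dyads, and it must be invariant under the action of $S(\nodeset)$ on $D(\nodeset)$ induced by relabeling nodes: if $\pi \in S(\nodeset)$ and $d = ij$, $d' = kl$ are dyads with $d \not\sim d'$ in $\skel(X)$ witnessed by some separating set $S$ with $d \cip d' \cd S$, then applying $\pi$ gives $\pi(d) \cip \pi(d') \cd \pi(S)$, so $\pi(d) \not\sim \pi(d')$ as well. Hence the edge set of $\skel(X)$ is a union of orbits of the induced $S(\nodeset)$-action on the set $\binom{D(\nodeset)}{2}$ of pairs of dyads. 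First I would classify these orbits: a pair of distinct dyads either shares a common node (the pair is \emph{incident}) or is disjoint, and $S(\nodeset)$ acts transitively within each of these two classes (for $|\nodeset| \geq 4$; the small cases $|\nodeset| \le 3$ should be checked separately or are degenerate). Therefore there are exactly two orbits, and $\skel(X)$ has edge set equal to one of: the empty set, the set of incident pairs (which is precisely $L(\nodeset)$), the set of disjoint pairs (which is precisely $L^c(\nodeset)$), or the set of all pairs (the complete graph). These are exactly the four graphs listed.

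The remaining point is to confirm that the orbit structure is exactly as claimed, i.e.\ that $S(\nodeset)$ acts transitively on incident pairs and on disjoint pairs of dyads. For incident pairs: given $\{ij, ik\}$ and $\{i'j', i'k'\}$, choose a permutation sending $i \mapsto i'$, $\{j,k\} \mapsto \{j',k'\}$; such a permutation exists whenever $|\nodeset|$ is large enough to accommodate the relabeling, which it is. For disjoint pairs $\{ij, kl\}$ with $i,j,k,l$ distinct: any bijection on these four nodes extends to a permutation of $\nodeset$, and one easily maps one disjoint pair to another, which requires $|\nodeset| \ge 4$. When $|\nodeset| < 4$ there are no disjoint pairs of dyads, so only cases (a), (b), (d) can occur and the statement still holds as stated (with (c) vacuous, or coinciding with (a)). I would record this case distinction briefly.

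The main obstacle — really the only subtlety — is making precise the claim that $\skel(X)$ is genuinely $S(\nodeset)$-invariant. This needs the definition of $\skel$ in terms of the \emph{existence} of \emph{some} separating set $S \subseteq V$, together with the fact that exchangeability means $P$ is invariant under $\pi$; one must check that $u \cip v \cd S$ under $P$ implies $\pi(u) \cip \pi(v) \cd \pi(S)$ under $P$, which follows since $P$ and its $\pi$-pushforward agree and conditional independence is preserved by the coordinate relabeling $\pi$. Once that invariance is in hand, the orbit-counting argument is immediate and the proposition follows. I do not expect any analytic difficulty; the content is entirely the combinatorics of the two orbits of dyad-pairs under the symmetric group together with the observation that the skeleton is a union of such orbits.
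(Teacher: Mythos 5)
Your proposal is correct and is essentially the paper's own argument, just phrased in the language of group orbits: the paper's case analysis (showing one cannot have both an edge and a non-edge between two disjoint dyad-pairs, nor between two incident dyad-pairs) is exactly the statement that the edge set of $\skel(X)$ must be a union of the two orbits of $S(\nodeset)$ acting on pairs of dyads. Your explicit treatment of the invariance of the skeleton under relabeling and of the degenerate cases $|\nodeset|<4$ is a welcome addition but does not change the substance.
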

\begin{proof}
    Let $| \nodeset | = n$.
Exchangeability implies that one can relabel the ${n \choose 2}$ vertices of the dependence skeleton corresponding to every permutation of  $(1,\dots,n)$  without changing independence statements of form $i\ci j\cd S$ for some $S\subset D$. If the dependence skeleton is empty or complete it is clear that relabeling would not change such independence statements.

If there is an edge $ij,kl$, for mutually non-equal $i,j,k,l$, and a missing edge $i'j',k'l'$, for mutually non-equal $i',j',k',l'$, in the dependence skeleton, then by the relabeling corresponding to sending $(i',j',k',l')$ to $(i,j,k,l)$  we obtain a missing edge between  $ij$ and $kl$. However, in the latter labeling, there is an independence of form $ij\ci kl \cd S$ whereas in the former labeling there is no independence of form $ij\ci kl \cd S$. Hence, such a graph cannot be the dependence skeleton of an exchangeable random network.

Similarly, if there is an edge $ij,il$, for mutually non-equal $i,j,l$, and a missing edge  $i'j',i'l'$, for mutually non-equal $i',j',l'$, in the dependence skeleton, then by the relabeling corresponding to sending $(i',j',l')$ to $(i,j,l)$ we obtain a missing edge between  $ij$ and $il$. With the same argument as in the previous case, such a graph cannot be the dependence skeleton of an exchangeable random network.

The remaining cases are when either all the pairs of vertices $ij,il$ are adjacent and all the pairs of vertices $ij,kl$ are not adjacent, which leads to the $L(\nodeset)$ ; or all the pairs of vertices $ij,kl$ are adjacent and all the pairs of vertices $ij,il$ are not adjacent, which leads to $L^c(\nodeset)$ .
\halm \end{proof}

To give a complete characterization of all possible graphical Markov structures with these skeletons, we need to give a precise definition of a generic graphical Markov structure. We abstain from doing so but just mention that it is clear that directed edges are not relevant for exchangeable random networks.
If we vary the type of edge so that edges are either all bidirected or
undirected, this leads to six different dependence structures since the
undirected and bidirected graphs in cases (a) and (d) are Markov equivalent. If
an exchangeable random network $X$ satisfies the Markov property associated with
an undirected or bidirected dependence graph $G$ with vertex set $ \{ X_{ij}, ij
\in D(\nodeset) \}$ then $G$ is one of these cases. Dependence structures corresponding to the complete graph in (d) are uninteresting. The case (a) corresponds to the Erd\"os--Renyi model. Structures corresponding to the other four dependence graphs are classified below, where our main focus is on the bidirected incidence graph case as it corresponds to the dissociated model and has certain desired properties concerning extendability; see Section \ref{sec:consistent}.

\paragraph{Undirected incidence graph -- the Frank-Strauss model.}
\cite{fra86} showed the result below:
 \begin{prop}
Every clique $C$ of the incidence graph $L(\nodeset)$ of the complete network base $(\nodeset, D)$ corresponds exactly to a subnetwork $(\nodeset_C,C)$ of $(\nodeset, D)$ that is either a triangle or a $k$-star.
\end{prop}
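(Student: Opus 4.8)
The plan is to treat this as a purely combinatorial statement about the line graph of the complete graph on $\nodeset$: a clique $C$ of $L(\nodeset)$ is, by definition of the incidence graph, a family of dyads (i.e.\ $2$-subsets of $\nodeset$) that is pairwise incident, meaning any two of them share a node. I would first record the converse direction, which is immediate: in a triangle any two edges meet, and in a $k$-star any two edges meet at the hub, so every triangle and every $k$-star is indeed a clique of $L(\nodeset)$; and I would dispose of the degenerate cases $|C|\le 2$, where $C$ is a single dyad (a $1$-star) or two incident dyads (a $2$-star). This reduces the content to showing that any pairwise-incident family $C$ with $|C|\ge 3$ is either a $k$-star or a triangle, after which ``corresponds exactly'' is read as the resulting bijection between cliques and the triangles/stars of $(\nodeset, D)$.

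The key step is a dichotomy. Suppose $C$ is \emph{not} a star, i.e.\ no single node lies on every dyad in $C$. Since $C$ is a clique, I can pick two dyads $e=\{v,a\}$ and $e'=\{v,b\}$ meeting in a node $v$, and since $C$ is not a star I can pick $e''\in C$ with $v\notin e''$. Because $e''$ meets both $e$ and $e'$ but avoids $v$, it must contain $a$ and $b$, so $e''=\{a,b\}$ and $T:=\{e,e',e''\}$ is a triangle on the node set $\{v,a,b\}$. To conclude $C=T$, I would use the elementary observation that the only $2$-subsets of $\nodeset$ meeting all three of $\{v,a\},\{v,b\},\{a,b\}$ are those three subsets themselves: a dyad disjoint from $\{v,a,b\}$ meets none of them, and a dyad meeting $\{v,a,b\}$ in exactly one node misses the opposite edge of $T$. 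Hence every dyad in $C$ lies inside $\{v,a,b\}$, so $C=T$. The remaining possibility, that a common node $v$ lies on every dyad of $C$, exhibits $C$ as the $k$-star with hub $v$ and leaf set $\{\,$other endpoints of the dyads in $C\,\}$, with $k=|C|$, the leaves being distinct because the dyads are distinct.

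I do not expect a genuine obstacle here, as this is essentially the classical description of cliques in a line graph; the only points requiring care are (i) checking that the middle case analysis is genuinely exhaustive --- either some node is common to all dyads, or the argument of the previous paragraph produces a triangle --- and (ii) making precise the ``corresponds exactly'' phrasing, namely that a triangle is recovered from its $3$-element node set and a $k$-star from its hub together with its leaf set, so the map ``clique $\mapsto$ induced subnetwork'' is injective. One may additionally remark (though it is not strictly needed for the statement) that when $|\nodeset|\ge 4$ the \emph{maximal} cliques are exactly the triangles and the full stars $K_{1,|\nodeset|-1}$, which is what feeds into the Hammersley--Clifford factorization of the Frank--Strauss model.
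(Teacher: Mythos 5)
Your argument is correct and complete: the dichotomy (either all dyads share a node, giving a $k$-star, or some dyad avoids the common node of two others, forcing a triangle that then absorbs the whole clique) is the standard characterization of cliques in the line graph of $K_n$, and your handling of the degenerate cases and of the ``corresponds exactly'' bijection is sound. The paper itself gives no proof of this proposition --- it is stated as a result of Frank and Strauss (1986) and used as a black box --- so there is no in-paper argument to compare against; your proof supplies exactly the missing combinatorial content.
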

  Based on this, they show that a random network $X$ is Markov with respect to $L(\nodeset)$ if and only if the canonical parameters $\nu_B$ are zero unless $B$ is a $k$-star or a triangle. Assuming also exchangeability, this model is known as the Frank-Strauss model and studied extensively in \cite{fra86}.

\begin{example}
Consider again the network $x$ 
in Figure~\ref{fig:3} and any choice of
the canonical parameters $\nu \in
\mathbb{R}^{\mathcal{U}_{\nodeset} \setminus \varnothing}$.
Following Example~\ref{ex:exch}, the probability of observing $x$ under the Frank--Strauss model becomes
\begin{equation}\label{eq:xfrank}
    P_{\nu}(x)=\exp\left\{4\nu_{\n{\vcenter{\hbox{\includegraphics[scale=0.05]{figureex2-1.pdf}}}}}+5\nu_{\n{\vcenter{\hbox{\includegraphics[scale=0.05]{figureex2-2.pdf}}}}}+\nu_{\n{\vcenter{\hbox{\includegraphics[scale=0.05]{figureex2-4.pdf}}}}}+\nu_{\n{\vcenter{\hbox{\includegraphics[scale=0.05]{figureex2-5.pdf}}}}}-\psi(\nu)\right\},
\end{equation}
whereas no further simplification appears in terms of the mean-value  parameters
$\{ z_U, U \in \mathcal{U}_{\nodeset} \}$.
\end{example}

\paragraph{Undirected complement of the incidence graph.}

Cliques in the complement of the incidence graph $L^c(\nodeset)$ correspond to collections of disjoint dyads. Therefore, the corresponding model is an ERGM with sufficient statistics corresponding to collections of disjoint dyads (including the subnetwork that is a single dyad). For the case of a network with five nodes, the dependence graph is the famous \emph{Petersen graph} \citep{petersen:98} and for $n$ vertices, the graph is known as the Kneser graph $KG_{n,2}$ \citep[Ch.\ 7]{godsil:royle:01}; for this reason we shall refer to the model determined by all exchangeable random networks that are Markov w.r.t.\ the Kneser graph as the \emph{Kneser model}.

\begin{example}\label{ex:undirectedcomplement}
Consider again the network $x$ in Figure~\ref{fig:3}. Still following Example~\ref{ex:exch} we get
using the exponential representation (\ref{eq:expexch}) and the Markov property w.r.t.\ $L^c(\nodeset)$  that in the Kneser model,
the probability of observing $x$ is
\begin{equation}\label{eq:undircomp}
    P_{\nu}(x)=\exp\left\{4\nu_{\n{\vcenter{\hbox{\includegraphics[scale=0.05]{figureex2-1.pdf}}}}}+\nu_{\n{\vcenter{\hbox{\includegraphics[scale=0.05]{figureex2-3.pdf}}}}}-\psi(\nu)\right\},
\end{equation}
for any $\nu \in
\mathbb{R}^{\mathcal{U}_{\nodeset} \setminus \varnothing}$.
\end{example}

\paragraph{Bidirected complement of the incidence graph.}  Every connected
subset of the complement of the incidence graph $L^c_{\leftrightarrow}(\nodeset)$ corresponds to a disconnected subnetwork of the of size $\nodeset$ or the subnetwork that is a single tie (which corresponds to a single vertex in $L^c_{\leftrightarrow}(\nodeset)$). Therefore, the corresponding model has M\"obius parameters corresponding to these. This model satisfies a property that is dual to dissociatedness so that, for example $X_{ij}\cip X_{jk}$.

\begin{example}\label{ex:bidirectedcomplement} If the random network $X$ is Markov w.r.t.\ the complement of the bidirected incidence graph $L^c_{\leftrightarrow}(\nodeset)$,
the M\"obius parametrization of the probability of the network $x$ in Figure~\ref{fig:3} is
\begin{equation}\label{eq:bidircompl}
    \mathbb{P}\left( X=x \right) =(z_{\n{\vcenter{\hbox{\includegraphics[scale=0.05]{figureex2-1.pdf}}}}})^2z_{\n{\vcenter{\hbox{\includegraphics[scale=0.05]{figureex2-3.pdf}}}}}-2 z_{\n{\vcenter{\hbox{\includegraphics[scale=0.05]{figureex2-1.pdf}}}}}(z_{\n{\vcenter{\hbox{\includegraphics[scale=0.05]{figureex2-3.pdf}}}}})^2+(z_{\n{\vcenter{\hbox{\includegraphics[scale=0.05]{figureex2-3.pdf}}}}})^3.
\end{equation}
This has been obtained, for example, as
$z_{\n{\vcenter{\hbox{\includegraphics[scale=0.05]{figureex2-7.pdf}}}}}=(z_{\n{\vcenter{\hbox{\includegraphics[scale=0.05]{figureex2-1.pdf}}}}})^2z_{\n{\vcenter{\hbox{\includegraphics[scale=0.05]{figureex2-3.pdf}}}}}$ since, for an arbitrary labeling of ${\vcenter{\hbox{\includegraphics[scale=0.05]{figureex2-7.pdf}}}}$, two of the three disconnected induced subgraphs $L^c_{\leftrightarrow}(\nodeset)[12,34]$, $L^c_{\leftrightarrow}(\nodeset)[13,24]$, and $L^c_{\leftrightarrow}(\nodeset)[14,23]$ of $L^c_{\leftrightarrow}(\nodeset)$ correspond to the subnetwork ${\vcenter{\hbox{\includegraphics[scale=0.05]{figureex2-1.pdf}}}}$ and one to ${\vcenter{\hbox{\includegraphics[scale=0.05]{figureex2-3.pdf}}}}$.
\end{example}
Later in Section~\ref{sec:consistent} we will argue that
the only Markov structures that are also extendable to arbitrarily large
networks are the complete graph, the empty graph, and the bidirected incidence
graph; hence the latter of these structures is the main focus of the present paper.

\subsection{Bidirected incidence graph -- the dissociated model}\label{sec:exc-dis}
We now consider in detail the class of finitely exchangeable network distributions that
are Markov with respect to the bidirected incidence graph
$G=L_{\leftrightarrow}(\nodeset)$, where $\nodeset$ is finite. This is  equivalent to requiring an exchangeable network $X$ on $\nodeset$ to be dissociated:
 $\{X_{ij}, i,j\in
\nodeset'\}$ are independent of $\{X_{kl}, k,l\in \nodeset''\}$ for each pair
disjoint subsets $\nodeset'$ and $\nodeset''$ of $\nodeset$.

We begin by stating a simple yet useful property of the incidence graph. For a given finite node set $\nodeset$, let
$C(\nodeset) \subset D(\nodeset)$ denote the subsets of dyads corresponding to
connected subgraphs of the complete graph on $\nodeset$.

\begin{lemma}\label{coro:dissocnet}
A random network $X$ is dissociated if and only if for any non-empty subnetwork $B$ it holds that
\[
    z_B =  \prod_i z_{C_i},
\]
where the $C_i$ are the connected components of $B$.
\end{lemma}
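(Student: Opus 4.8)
The plan is to derive this as a direct corollary of the Drton--Richardson characterization \eqref{eq:prod_rest} of bidirected binary graphical models, applied to the specific bidirected graph $G = L_{\leftrightarrow}(\nodeset)$. First I would observe the key translation between the two pictures: a subset $B$ of dyads is a subnetwork of the complete graph on $\nodeset$, and the \emph{same} set $B$ is a vertex set in the incidence graph $L(\nodeset)$. The crucial combinatorial fact — which I would state and prove as the first step — is that two dyads $d_1, d_2 \in B$ are adjacent in $L_{\leftrightarrow}(\nodeset)$ precisely when they share a node, i.e.\ precisely when, viewed as edges of the network, they meet at a common vertex. Consequently, a set $B$ of dyads is connected \emph{as an induced subgraph of} $L_{\leftrightarrow}(\nodeset)$ if and only if $B$ is connected \emph{as a subnetwork} of the complete graph on $\nodeset$ (recall that edge-induced subnetworks carry no isolated nodes, so this notion of connectedness is unambiguous). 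In other words, the connected components of $B$ in the network sense are exactly the inclusion-maximal connected subsets $C_1, \dots, C_k$ of $B$ in the graphical-model sense appearing in \eqref{eq:prod_rest}.

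Given this identification, the proof is essentially a rewriting. By Proposition~\ref{prop:skeleton} and the discussion following it — or more directly by the definition in Section~\ref{sec:exc-dis} — $X$ is dissociated if and only if $X$ is Markov with respect to $L_{\leftrightarrow}(\nodeset)$. I would then invoke \eqref{eq:prod_rest}: $X$ is Markov with respect to $L_{\leftrightarrow}(\nodeset)$ if and only if for every $B$ that is disconnected in $L_{\leftrightarrow}(\nodeset)$ we have $z_B = z_{C_1}\cdots z_{C_k}$ where the $C_i$ partition $B$ into maximal connected sets. Using the translation above, "disconnected in $L_{\leftrightarrow}(\nodeset)$" becomes "disconnected as a subnetwork" and the $C_i$ become the network connected components of $B$. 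Finally, the stated identity $z_B = \prod_i z_{C_i}$ holds trivially when $B$ is itself connected (there is a single component, namely $B$), so requiring it for all non-empty $B$ is equivalent to requiring it only for disconnected $B$; this disposes of the apparent discrepancy in quantifier scope between \eqref{eq:prod_rest} and the lemma statement.

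I expect the only real content to be the combinatorial identification in the first step, and even that is elementary: one direction is the definition of the line graph, and for the other one checks that a network path $d_1, d_2, \dots, d_m$ between two dyads (a walk through shared endpoints) is literally the same data as a path in $L_{\leftrightarrow}(\nodeset)$. A minor point to handle carefully is that the product formula should be stated over the connected \emph{components} of $B$ as a network (equivalently, the connected components of the edge-induced subgraph $B$), so that isolated-node ambiguities never arise; once phrased this way the equivalence with \eqref{eq:prod_rest} is immediate and the proof is complete.
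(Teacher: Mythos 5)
Your proposal is correct and follows essentially the same route as the paper: the paper's proof likewise consists of invoking the Drton--Richardson factorization \eqref{eq:prod_rest} for the bidirected Markov property together with the one-to-one correspondence between connected subsets of $L_{\leftrightarrow}(\nodeset)$ and connected subnetworks, with the equivalence of dissociatedness and Markovness w.r.t.\ $L_{\leftrightarrow}(\nodeset)$ taken from the preceding discussion. Your additional remarks (the explicit check that line-graph connectivity matches network connectivity, and that the identity is vacuous for connected $B$, reconciling the quantifiers) only make explicit what the paper leaves implicit.
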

\begin{proof}
The proof follows directly from (\ref{eq:prod_rest}) and the fact that there is a one-to-one correspondence between the set of all connected
    subgraphs of  $L_{\leftrightarrow}(\nodeset)$  and  $C(\nodeset)$.
\halm \end{proof}

With some abuse of notation, for any non-empty $U \in \mathcal{U}_{\nodeset}$,
we will write $U=\dot{\cup} C$ for the decomposition of one  arbitrary graph in the
isomorphism class $[U]$
into the disjoint union of its connected components in $C(\nodeset)$. Note that
each graph in $[U]$ has  the same number of isomorphic connected
components.
By exchangeability and dissociatedness, for any $U \in \mathcal{U}_{\nodeset}$,
\[
z_U = \prod_{C\in C(\nodeset):U=\dot{\cup} C}z_C,
\]
regardless of the choice of the representing graph in $[U]$.
From (\ref{eq:prod_rest}) and (\ref{eq:mobexch}) we thus arrive at the following result.
\begin{prop}\label{prop:5} A network $X$ on $\nodeset$ is
exchangeable and dissociated if and only if
\begin{equation}\label{eq:100nn}
    \mathbb{P}(X = x)=\sum_{U\in
    	\mathcal{U}_\nodeset}(-1)^{|U|-|x|}r_U(x)\prod_{C\in C(\nodeset) :U=\dot{\cup} C}z_C, \quad \forall x
\in \mathcal{G}_{\nodeset},
\end{equation}
\end{prop}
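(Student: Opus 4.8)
The plan is to obtain \eqref{eq:100nn} by substituting the dissociatedness factorization of the M\"obius parameters into the exchangeable M\"obius expansion already derived in the text. Concretely, I would start from the general exchangeable inversion formula \eqref{eq:mobexch}, namely $\mathbb{P}(X=x)=\sum_{U\in\mathcal{U}_\nodeset}(-1)^{|U|-|x|}r_U(x)\,z_U$, which holds for \emph{every} exchangeable distribution on $\mathcal{G}_\nodeset$ and is nothing but \eqref{eq:mobius} with identical terms collected. The only additional ingredient needed is a description of $z_U$ under dissociatedness.

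The key step is the identity $z_U=\prod_{C\in C(\nodeset):U=\dot\cup C}z_C$, valid for exchangeable and dissociated $X$, which I would justify as follows. Pick any representative graph $B\in[U]$; by Lemma~\ref{coro:dissocnet} (equivalently \eqref{eq:prod_rest} applied to the bidirected incidence graph, using that connected subgraphs of $L_{\leftrightarrow}(\nodeset)$ correspond exactly to elements of $C(\nodeset)$), we have $z_B=\prod_i z_{C_i}$, where $C_1,\dots,C_k$ are the connected components of $B$. By exchangeability, $z_B$ depends only on the isomorphism class $[U]$, so $z_U:=z_B$ is well defined; moreover each connected component $C_i$ of $B$ is itself an edge-induced connected subgraph, i.e.\ $C_i\in C(\nodeset)$, and the multiset of components is exactly what the notation $U=\dot\cup C$ records (and, as already noted in the text, any two representatives of $[U]$ have the same number of components with the same isomorphism types, so the product is independent of the choice of $B$). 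This gives $z_U=\prod_{C\in C(\nodeset):U=\dot\cup C}z_C$. Substituting this into \eqref{eq:mobexch} yields \eqref{eq:100nn}, proving the ``only if'' direction.

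For the ``if'' direction, I would observe that \eqref{eq:100nn} is, by construction, the specialization of \eqref{eq:mobexch} obtained by imposing $z_U=\prod_{C:U=\dot\cup C}z_C$; since \eqref{eq:mobexch} is a bijective (linear) reparametrization of all exchangeable distributions in terms of $(z_U,U\in\mathcal{U}_\nodeset)$, any distribution of the form \eqref{eq:100nn} is exchangeable (its probabilities are constant on isomorphism classes because $r_U(x)$ and $|x|$ are isomorphism invariants) and has M\"obius parameters satisfying the product restriction \eqref{eq:prod_rest} for the bidirected incidence graph, hence is dissociated by Lemma~\ref{coro:dissocnet}. I do not expect any genuine obstacle here: the statement is essentially a bookkeeping combination of \eqref{eq:mobexch}, \eqref{eq:prod_rest}, and Lemma~\ref{coro:dissocnet}. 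The only point requiring a little care---and the closest thing to a ``hard part''---is checking that the indexing set $\{C\in C(\nodeset):U=\dot\cup C\}$ is well defined independently of the chosen representative of $[U]$, which follows from the remark in the text that all graphs in $[U]$ have isomorphic connected components, together with the observation that $z_C$ depends only on the isomorphism type of $C$.
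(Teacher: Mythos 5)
Your proposal is correct and follows essentially the same route as the paper: the text justifies $z_U=\prod_{C:U=\dot\cup C}z_C$ via exchangeability and dissociatedness (i.e.\ Lemma~\ref{coro:dissocnet}) and then simply substitutes into \eqref{eq:mobexch}, which is exactly your argument. Your additional remarks on well-definedness of the decomposition and on the converse direction (via invertibility of the M\"obius transform) only make explicit what the paper leaves implicit.
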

It is worth noting that the model combining dissociatedness and exhangeability
remains a \emph{log-mean linear model} in the sense of \cite{roverato:etal:13}.
\begin{example}\label{ex:dissexch}

The additional restriction of dissociatedness does not yield any helpful
simplification of the exponential representations 
in
(\ref{eq:11})  from \Cref{ex:exch}, since the dissociatedness
conditions entails restriction on the possible values of the canonical parameters
and not on the form of the probabilities.
For the observed network $x$, no further simplification of its M\"{o}bius
parametrization in  \eqref{eq:xmobexch} is possible.

Continuing with Example \ref{ex:exch}, under dissociatedness, the likelihood
function in (\ref{eq:xmobexch}) should now be maximized not only under the
constraints that all expressions in (\ref{eq:mobius}) be non-negative, but also
under the  non-linear constraints described in (\ref{eq:prod_rest}), which in
this case reduces to just one constraint $z_{\n{\vcenter{\hbox{\includegraphics[scale=0.05]{figureex2-3.pdf}}}}}= z_{\n{\vcenter{\hbox{\includegraphics[scale=0.05]{figureex2-1.pdf}}}}}^2$. Thus, although the likelihood function is linear in $z$, numerical maximization is in general necessary. In this particular case we find that
\[\begin{split}
    \hat z_{\n{\vcenter{\hbox{\includegraphics[scale=0.05]{figureex2-1.pdf}}}}}=1/2,\; \hat z_{\n{\vcenter{\hbox{\includegraphics[scale=0.05]{figureex2-2.pdf}}}}}=5/16, \hat z_{\n{\vcenter{\hbox{\includegraphics[scale=0.05]{figureex2-3.pdf}}}}}=1/4\;\hat z_{\n{\vcenter{\hbox{\includegraphics[scale=0.05]{figureex2-4.pdf}}}}}= 3/16,\\ \hat z_{\n{\vcenter{\hbox{\includegraphics[scale=0.05]{figureex2-5.pdf}}}}}= 3/16,\;\hat z_{\n{\vcenter{\hbox{\includegraphics[scale=0.05]{figureex2-6.pdf}}}}}= 1/8,\;\hat z_{\n{\vcenter{\hbox{\includegraphics[scale=0.05]{figureex2-7.pdf}}}}}=1/16,\end{split}
\]
and the remaining parameter estimates are zero,
which should be compared to the results in the non-dissociated case.
In fact, this estimate represents a mixture of the uniform distribution of all
networks isomorphic to the observed network $x$, and the empty network, with
weights $3/4$ and $1/4$, respectively.
\end{example}
We emphasize that when dissociatedness is assumed, the maximum likelihood estimator may no longer be unique, as the example below shows.

\begin{figure}[htb]
\centering
\begin{tikzpicture}[node distance = 5mm and 5mm, minimum width = 3mm]
    \begin{scope}
      \tikzstyle{every node} = [shape = circle,
      font = \scriptsize,
      minimum height = 3mm,
      inner sep = 2pt,
      draw = black,
      fill = white,
      anchor = center],
      text centered]
      \node(a) at (0,0) {$1$};
      \node(b) [right = of a] {$2$};
      \node(c) [right= of b] {$3$};
			\node(d) [right = of c] {$4$};

    \end{scope}
    \begin{scope}
      \draw (a) -- (b);
      \draw (b) -- (c);
			\draw (c) -- (d);
			
    \end{scope}
    \end{tikzpicture}
  \caption[]{\small{An observed networks with four nodes.}}
     \label{fig:line}
\end{figure}
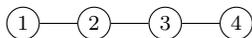
\begin{example}
Suppose that we observe the network  in Figure~\ref{fig:line}.
Then the likelihood function assuming dissociatedness is maximized at any value
of $\lambda$ satisfying $0\leq \lambda\leq 1/16$ by the quantities
\[\hat z_{\n{\vcenter{\hbox{\includegraphics[scale=0.05]{figureex2-1.pdf}}}}}=1/2,\; \hat z_{\n{\vcenter{\hbox{\includegraphics[scale=0.05]{figureex2-2.pdf}}}}}=3/16,\;\hat z_{\n{\vcenter{\hbox{\includegraphics[scale=0.05]{figureex2-3.pdf}}}}}=1/4,\;\hat z_{\n{\vcenter{\hbox{\includegraphics[scale=0.05]{figureex2-4.pdf}}}}}= 1/16-\lambda,\; \hat z_{\n{\vcenter{\hbox{\includegraphics[scale=0.05]{figureex2-5.pdf}}}}}= \lambda,\;\hat z_{\n{\vcenter{\hbox{\includegraphics[scale=0.05]{figureex2-6.pdf}}}}}= 1/16,\]
and all other $\hat{z}$ equal to zero. Indeed this corresponds to a random network that has probability $3/4$ of being isomorphic to the observation and the remaining probability mass of $1/4$ is distributed arbitrarily  between a triangle plus an isolated point, and a 3-star.
\end{example}

\section{Exchangeable and summarized random networks}\label{sec:summarized}

An assumption commonly used in network modeling is that the probability of a
network configuration $x$ in $\mathcal{G}_{\nodeset}$ be a function of its
{\it degree distribution\/} $\{ n_j(x), j \in
\nodeset \}$, where
\[
n_j(x)= \sum_{i\in \nodeset(x)}\delta_j\{d_i(x)\}, \quad j \in \nodeset,
\]
with $d_i(x) = \sum_{j \in \nodeset }X_{ij}$ the degree of node $i$ and
 $\delta_j(k)=1$ if $k=j$ and $0$ otherwise. Note that here the degree distribution is not normalized and $n_j(x)$ is the number of nodes in $x$ with degree $j$.

 \cite{lauritzen:08} defines a random binary array $X=(X_{ij})_{i,j\in\nodeset}$ to be \emph{weakly summarized} (WS)  if there is a function $\varphi$ such that  it holds that
\begin{equation}
    \mathbb{P}\{(X_{ij}=x_{ij})_{i,j\in\nodeset}\}=\varphi(R_i+C_i,i\in \nodeset),
\end{equation}
where $R_i=\sum_jX_{ij}$ and $C_j=\sum_iX_{ij}$ are the row and column sums. For symmetric arrays, it holds that $R_i=C_i$, and a distribution with the above property may be called \emph{symmetric weakly summarized} (SWS).
Again, this can be translated into statements about networks:  A random  network
$X$ on $\nodeset$ is \emph{summarized} if, for any $x \in
\mathcal{G}_{\nodeset}$,  $\mathbb{P}(X=x)=\varphi(d_i(x),i\in\nodeset)$, where $d_i(x)=R_i=C_i$ is the degree of node $i$.
SWE distributions are generally not SWS or vice versa \citep{lauritzen:08}.
If a distribution is both SWE and SWS then we write that the distribution is SWES.
When $\nodeset$ is infinite and $X$ is a SWES array whose distribution
is in $\mathcal{E}_\infty$, \cite{lauritzen:08} showed that \Cref{prop:1n} holds
with $\phi$  having a specific form: $\phi(u,v)={a(u)a(v)}/\{1+a(u)a(v)\}$ \citep{lauritzen:08}.
The  beta model in \eqref{eq:beta} then emerges when further conditioning on $u$ and letting $\beta_i=\log a(u_i)$.

\subsection{Characterizing summarizing statistics}

We now study exchangeable and summarized distributions on a finite node set
$\nodeset$. Isomorphic graphs have the same degree distribution; but, as
shown for instance in Example \ref{ex:7} below, the opposite is not true and thus the set of
summarized and exchangeable network distributions on $\nodeset$ is a subfamily
of the set of exchangeable distribution on $\nodeset$. In order to characterize
the types of restriction that summarizedness entails, we begin with some
preliminary lemmas which identify which subgraph counts are functions of the
degree distribution only. Recall that subgraph counts are canonical sufficient
statistics for the exponential family of exchangeable network
distributions: see \Cref{sec:exch.parametrization}.

\begin{lemma}\label{lem:3}
    Given an unlabeled graph $U\in\mathcal{U}_\nodeset$, it holds that $\sigma_U(x)$ 
    is a function of the degree distribution of $x$,  for all $x \in
    \mathcal{G}_{\nodeset}$, if
\begin{enumerate}[\rm(a)]
 \item $U=S_k$, a $k$-star for any size $k\geq 0$ or
     $U=\n{\vcenter{\hbox{\includegraphics[scale=0.05]{figureex2-3.pdf}}}}$, the
     disjoint union of two independent edges, or
 \item $U=K_{|\nodeset|-1}$, the complete graph of size $|\nodeset|-1$, or
     $U=K_{|\nodeset|-1}\setminus\{e\}$, the complete graph of size $|\nodeset|-1$ with one edge removed.
\end{enumerate}
\end{lemma}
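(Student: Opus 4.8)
The plan is, in each case, to write down an explicit closed form for the subgraph count $\sigma_U(x)=\sub(U,x)$ purely in terms of the degree sequence $(d_i(x))$, and then observe that any such expression is automatically a function of the degree distribution $(n_j(x))$: $\sigma_U$ is node-relabelling invariant, and one has $\sum_i\binom{d_i(x)}{k}=\sum_j n_j(x)\binom{j}{k}$ and $|E(x)|=\tfrac12\sum_j j\,n_j(x)$. I will freely use $\sigma_U(x)=\sub(U,x)=\inj(U,x)/\aut(U)$ from \eqref{eq:subcount}, and for part~(b) I will pass to the complement graph $\bar x$, whose degree sequence $\bar d_v(x)=(|\nodeset|-1)-d_v(x)$ is itself a function of the degree distribution of $x$.

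\textbf{Part (a).} For a $k$-star with $k\ge 2$, every copy of $S_k$ inside $x$ has a \emph{unique} hub --- the unique vertex of the copy of degree $k$ when $k\ge 3$, and the middle vertex when $k=2$ --- so a copy of $S_k$ is the same datum as a node $i$ together with a $k$-subset of its neighbours; hence $\sigma_{S_k}(x)=\sum_i\binom{d_i(x)}{k}$. For $k=1$ one has $\sigma_{S_1}(x)=|E(x)|$, and $k=0$ contributes a constant; all of these are functions of $(n_j(x))$. For $U$ equal to two disjoint edges, I would count unordered pairs of distinct edges and subtract those sharing a node: two distinct edges of a simple graph share at most one node, and the number of pairs through a common node is $\sum_i\binom{d_i(x)}{2}$, so $\sigma_U(x)=\binom{|E(x)|}{2}-\sum_i\binom{d_i(x)}{2}$, again a function of the degree distribution.

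\textbf{Part (b).} Write $n=|\nodeset|$ and $m=|E(\bar x)|=\binom{n}{2}-|E(x)|$. Since $K_{n-1}$ and $K_{n-1}\setminus\{e\}$ have $n-1$ vertices, every subgraph of $x$ isomorphic to one of them sits on a vertex set of the form $A=\nodeset\setminus\{v\}$, and the number of edges of $\bar x$ induced on such an $A$ equals $m-\bar d_v(x)$. A copy of $K_{n-1}$ on $A$ exists iff $\bar x(A)$ is edgeless, i.e.\ iff $\bar d_v(x)=m$, i.e.\ iff $d_v(x)=n-1-m$; hence $\sigma_{K_{n-1}}(x)=n_{\,n-1-m}(x)$, with the convention $n_\ell(x)=0$ for $\ell\notin\{0,\dots,n-1\}$ (this also covers $m\ge n$, where no $(n-1)$-clique can exist). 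For $U=K_{n-1}\setminus\{e\}$, a direct computation using $\aut(K_{n-1}\setminus e)=2(n-3)!$ shows that a vertex set $A=\nodeset\setminus\{v\}$ supports $\binom{n-1}{2}$ copies of $U$ when $\bar x(A)$ is edgeless, exactly one copy when $\bar x(A)$ has precisely one edge, and none otherwise; summing over $v$ and using $|E(\bar x(A))|=m-\bar d_v(x)$ yields $\sigma_{K_{n-1}\setminus e}(x)=\binom{n-1}{2}\,n_{\,n-1-m}(x)+n_{\,n-m}(x)$, once more a function of the degree distribution.

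I expect the only genuine content to be in part~(b): the formulas in part~(a) are elementary counting, whereas part~(b) hinges on the two observations that an $(n-1)$-vertex subgraph corresponds to deleting a single node, and that complementation turns the global condition ``$A$ spans a (near-)clique of $x$'' into the purely local degree condition on $v$ in $\bar x$. The point requiring care is the automorphism bookkeeping for $K_{n-1}\setminus e$ together with the boundary cases in which the relevant $n_\ell(x)$ vanishes; a convenient consistency check is $n=4$, where $K_{n-1}\setminus e\iso S_{n-2}$, so the formulas obtained in (a) and (b) must coincide.
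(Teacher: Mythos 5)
Your proof is correct. Part (a) is essentially identical to the paper's argument: the unique-hub observation giving $\sigma_{S_k}(x)=\sum_i\binom{d_i(x)}{k}$ for $k\ge 2$, the handshaking identity for $k=1$, and the inclusion--exclusion count $\binom{|E(x)|}{2}-\sigma_{S_2}(x)$ for two disjoint edges all appear there in the same form.

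Part (b) is where you genuinely diverge, to your advantage. The paper handles $K_{|\nodeset|-1}$ by exhibiting the full list of degree distributions $n(x)$ for which the count is nonzero (complete graph, complete minus an edge, complete minus a star at one vertex) and then explicitly declines to write out the analogous, ``somewhat cumbersome,'' case analysis for $K_{|\nodeset|-1}\setminus e$. Your complementation argument replaces that enumeration with two clean closed forms: writing $n=|\nodeset|$ and $m=\binom{n}{2}-|E(x)|$, the identity $|E(\bar x(\nodeset\setminus\{v\}))|=m-\bar d_v(x)$ converts the global near-clique condition on $\nodeset\setminus\{v\}$ into a degree condition on $v$, giving $\sigma_{K_{n-1}}(x)=n_{n-1-m}(x)$ and $\sigma_{K_{n-1}\setminus e}(x)=\binom{n-1}{2}\,n_{n-1-m}(x)+n_{n-m}(x)$. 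I checked the per-vertex-set counts ($\binom{n-1}{2}$, $1$, or $0$ copies of $K_{n-1}\setminus e$ according as $\bar x(A)$ has $0$, $1$, or more edges) and the boundary conventions; they are right, and your formula for $\sigma_{K_{n-1}}$ reproduces the paper's case table. So your version both subsumes the paper's argument and actually supplies the half of part (b) that the paper leaves to the reader.
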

\begin{proof}
To prove (a), note that a $0$-star is simply a vertex and a $1$-star is  an edge.
If $U=S_k$ for $k\neq 1$ then
\begin{equation*}
\sigma_{S_k}(x)=\sum_{j=0}^\infty{j\choose k}n_j(x),
\end{equation*}
where the sum only effectively extends at most from $j=k$ to $| \nodeset|-1$ as all other terms are zero.
If $U=S_1$ we have to divide by two, and obtain
\begin{equation}\label{eq:handshake}
\sigma_{S_1}(x) =\sum_{j=0}^\infty jn_j(x)/2 =|E(x)|
\end{equation}
where $|E(x)|$ is the number of edges in $x$. The formula (\ref{eq:handshake}) is also known as Euler's \emph{handshaking lemma}; see e.g.\ \cite[p.12]{wilson:96}. Finally,
if $U=\n{\vcenter{\hbox{\includegraphics[scale=0.05]{figureex2-3.pdf}}}}$   we have
\begin{equation*}
\sigma_{\n{\vcenter{\hbox{\includegraphics[scale=0.05]{figureex2-3.pdf}}}}}  ={|E(x)|\choose 2}-\sigma_{S_2}(x).
\end{equation*}
For (b), we have that
$$\sigma_{K_{|\nodeset|-1}}(x)=
\left\{                                 																
\begin{array}{ll}
  |\nodeset|, &\text{if } \mbox{$n(x)=(0,\dots,0,|\nodeset|)$;} \\
  2, & \text{if }\mbox{$n(x)=(0,\dots,0,2,|\nodeset|-2)$;} \\
                                      1, &\text{if } \mbox{$n(x)=(0,\dots,0,1_r,0,\dots,0,|\nodeset|-r-1,r)$;} \\
                                      0, & \mbox{\text{otherwise},}
                                    \end{array}
                                  \right. $$
where $1_r$ is $1$ at the $r$th entry. The top row corresponds to the complete graph, the second row to the complete graph with one edge removed, and the third row to a complete graph with a vertex from which $|\nodeset|-r-1$ edge is removed. 

A similar function can be provided for the case of $K_{|\nodeset|-1}\setminus\{e\}$, but we abstain from giving the somewhat cumbersome details. 

\halm \end{proof}

The converse to the statement in Lemma~\ref{lem:3} only holds with some modification: 
\begin{lemma}\label{lem:onlyif}
    Given an unlabeled graph $U\in\mathcal{U}_\nodeset$, if $\sigma_U(x)$ 
    is a function of the degree distribution of $x$,  for all $x \in
    \mathcal{G}_{\nodeset}$, then
\begin{enumerate}[\rm(a)]
 \item if $U$ has at most $|\nodeset|-2$ vertices, $U$ is a $k$-star or
     $U=\n{\vcenter{\hbox{\includegraphics[scale=0.05]{figureex2-3.pdf}}}}$, the disjoint union of two independent edges;
 \item if $U$ has at most $|\nodeset|-1$ vertices, $U$ is a $k$-star, or $U=
     \n{\vcenter{\hbox{\includegraphics[scale=0.05]{figureex2-3.pdf}}}}$, or
     $K_{|\nodeset|-1}$, or $K_{|\nodeset|-1} \setminus e$, a complete graph on
     $|\nodeset|-1$ vertices with one edge removed.
\end{enumerate}

\end{lemma}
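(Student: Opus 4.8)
\textbf{Proof proposal for Lemma~\ref{lem:onlyif}.}

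The plan is to argue contrapositively: I will show that if $U$ is \emph{not} one of the listed graphs, then $\sigma_U$ cannot be recovered from the degree distribution alone, by exhibiting two networks $x,x'\in\mathcal{G}_{\nodeset}$ with the same degree sequence but $\sigma_U(x)\neq\sigma_U(x')$. Since $\sigma_U(x)=\inj(U,x)/\aut(U)$ counts (up to the constant $\aut(U)$) the subgraphs of $x$ isomorphic to $U$, it suffices to find degree-preserving ``edge swaps'' that change the count of copies of $U$. The natural tool is the classical $2$-switch (or Whitney-type switching): pick four vertices $a,b,c,d$ with $ab,cd\in E(x)$ but $ac,bd\notin E(x)$, delete $ab,cd$ and add $ac,bd$; this preserves every degree. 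I would observe that any two networks on $\nodeset$ with the same degree sequence are connected by a sequence of $2$-switches (a standard fact, e.g.\ via Havel--Hakimi or Ryser), so it is enough to find, for each offending $U$, \emph{some} pair of degree-equivalent networks distinguished by $\sigma_U$; equivalently, it is enough to find one $2$-switch across which $\sigma_U$ changes, realized inside a graph with enough spare vertices.

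For part (a), suppose $U$ has at most $|\nodeset|-2$ vertices and $U$ is neither a $k$-star nor $\n{\vcenter{\hbox{\includegraphics[scale=0.05]{figureex2-3.pdf}}}}$. A graph that is not a disjoint union of at most two edges and is not a star must contain, as a subgraph, either a path $P_3$ on three edges, or a triangle, or a ``two-edge-plus-isolated-edge'' configuration that is not $\n{\vcenter{\hbox{\includegraphics[scale=0.05]{figureex2-3.pdf}}}}$ — more cleanly, $U$ must contain at least one of the three graphs $P_4$ (path with three edges), $K_3$ (triangle), or $S_2\,\dot\cup\, S_1$ (a path of length two plus a disjoint edge) as an induced substructure on $\le |\nodeset|$ vertices, unless every component of $U$ is a star and there are at most two components with the two-component case forcing $U=\n{\vcenter{\hbox{\includegraphics[scale=0.05]{figureex2-3.pdf}}}}$. (I would make this case split precise; the key point is that a single star, or the disjoint union of two single edges, are exactly the connected-or-two-edge graphs with no such substructure.) For each minimal offending substructure I would write down an explicit $2$-switch on $4$ vertices, padded out to the full size $|\nodeset|$ using the $\ge 2$ leftover vertices (to host the ``rest'' of a copy of $U$ or simply to sit isolated), that changes the number of copies of that substructure, hence changes $\sigma_U$; monotonicity/counting arguments then show the net change in $\sigma_U$ is nonzero. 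Part (b) is the same argument but with the weaker size bound $|\nodeset|-1$; the extra graphs $K_{|\nodeset|-1}$ and $K_{|\nodeset|-1}\setminus e$ appear precisely because on exactly $|\nodeset|-1$ vertices there is essentially no ``room'' to perform a nondegenerate $2$-switch that affects such a dense graph — indeed, as Lemma~\ref{lem:3}(b) shows, their counts \emph{are} degree functions — so they must be admitted to the list, while any other $U$ on $\le|\nodeset|-1$ vertices still admits a count-changing switch by the part-(a) analysis applied with one fewer spare vertex.

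The main obstacle I anticipate is the bookkeeping in part~(b): one must verify that for every $U$ on exactly $|\nodeset|-1$ vertices other than a star, $\n{\vcenter{\hbox{\includegraphics[scale=0.05]{figureex2-3.pdf}}}}$, $K_{|\nodeset|-1}$, or $K_{|\nodeset|-1}\setminus e$, there really is a degree-preserving modification of \emph{some} host network that changes $\sigma_U$ — the scarcity of a single free vertex makes the switch construction tight, and one has to be careful that the switch does not accidentally preserve the copy-count of $U$ by symmetry. A clean way to handle this is to note that such a $U$ either has an isolated vertex (then it effectively lives on $\le|\nodeset|-2$ vertices and part~(a) applies) or is connected but not complete and not $K_{|\nodeset|-1}\setminus e$, in which case it has at least two non-adjacent pairs of vertices and one can build two non-isomorphic but degree-equal networks on $\nodeset$ each containing a single copy of $U$ placed differently — making $\sigma_U$ take distinct values $1$ and $0$ (or $2$) on them. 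The star case itself needs no argument since Lemma~\ref{lem:3}(a) already puts all stars on the ``good'' side.
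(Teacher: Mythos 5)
Your overall strategy --- exhibiting, for each excluded $U$, two networks with the same degree sequence but different values of $\sigma_U$, via degree-preserving edge modifications --- is exactly the strategy of the paper's proof, and the reduction to $2$-switches is sound in principle. But two genuine gaps remain. First, the structural trichotomy you propose for part (a) is false as stated: the disjoint union of three (or more) edges contains no $P_4$, no $K_3$, and no copy of a $2$-star plus a disjoint edge, yet it is neither a star nor the union of two disjoint edges, so it falls through your case split (your parenthetical hedge only covers graphs with at most two components). The paper must, and does, treat ``three or more disjoint edges'' as a separate case, with an explicit pair of hosts (a $4$-cycle plus a disjoint edge versus a path on the same six vertices, each unioned with the remaining edges of $U$). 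Second, and more seriously, the inference ``the switch changes the number of copies of the offending substructure, hence changes $\sigma_U$'' is a non sequitur: the quantity to be controlled is the number of copies of $U$ itself, not of a small subgraph of $U$, and a switch that destroys one embedding of $U$ can create another. Essentially all of the work in the paper's proof consists of building the hosts $x_1,x_2$ around an entire copy of $U$ and then verifying by direct counting that $\sigma_U(x_1)\neq\sigma_U(x_2)$ (the values $1$ versus $2$, $1$ versus $0$, $2$ versus $3$, $6$ versus $9$, depending on the case); none of that verification appears in your sketch.

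For part (b) the gap is larger. The hard case is a connected $U$ on exactly $|\nodeset|-1$ vertices that is not a star, not complete, and not complete minus an edge; your proposal to ``place a single copy of $U$ differently'' in two degree-equal hosts does not explain why the second host contains no copy of $U$ at all, which is precisely the point you yourself flag as the main obstacle. The paper resolves it in two stages: if $U$ contains an induced $P_4$ or $C_4$, an explicit switch through that induced subgraph kills all copies of $U$; otherwise $U$ is trivially perfect and therefore has a vertex adjacent to all others, and the final two subcases are built around the adjacency pattern within that vertex's neighbourhood. This structural input (trivially perfect graphs and the dominating vertex) is the key missing idea in your outline; without it, the connected case on $|\nodeset|-1$ vertices is not handled, and the proposal remains a plan rather than a proof.
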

\begin{proof}
It is enough to show that for any $U$ that is not one of the above graphs, there exist two graphs $x_1$ and $x_2$ with the same degree sequence (and hence the same degree distribution) such that $\sigma_U(x_1)\neq \sigma_U(x_2)$.

(a): For $U$ with at most $|\nodeset|-2$ vertices, if $U$ contains two adjacent vertices $i$ and $j$ with degrees more then $1$ then define $x_1$ and $x_2$ as follows: Let $x_1$ contain $U$ and a disjoint edge $kl$. Let $x_2$ contain $U$ with the $ij$ edge removed and two non-adjacent vertices $k$ and $l$ adjacent to $i$ and $j$ respectively. These two graphs obviously have the same degree sequence and  $\sigma_U(x_1)=1$ whereas $\sigma_U(x_2)=2$.

Hence, the cases remaining to be considered are such where $U$ is a forest of stars. If there are at least two stars in $U$ that are $2$-stars or larger then call these $S$ and $T$. Let $x_1$ be $U$ and define $x_2$ as follows. Connect the hubs of $S$ and $T$. In addition, take a leaf from each of $S$ and $T$, connect them and remove the edge between them and their respective hubs. Now $x_1$ and $x_2$ have the same degree sequence but $\sigma_U(x_1)=1$ and $\sigma_U(x_2)=0$.

If $U$ contains only one star $S$ with hub $s$ that is not an edge and some disjoint edges then take an edge $ij$ in $U$ which is disjoint from $S$. Define $x_1$ by adding a vertex $l$ to $U$ and connecting it to $i$ and to a leaf $k$ of $S$. Define $x_2$ by taking $x_1$, removing the $sk$ edge and the $ij$ edge, and adding the $ki$ edge and the $sj$ edge.
Both graphs have the same degree sequence, but if $S$ is a $3$-star or larger we have $\sigma_U(x_1)=2$ and $\sigma_U(x_2)=3$; if $S$ is a $2$-star we have $\sigma_U(x_1)=6$ and $\sigma_U(x_2)=9$.

Finally, the only case that is left is when $U$ is collection of three or more disjoint edges:  $ij$, $kl$, $hr$, and a set  $\mathcal{E}$ of disjoint edges. Let $x_1$ be the union of the cycle $i\sim k\sim l\sim j\sim i$, the $hr$ edge, and $\mathcal{E}$;  let $x_2$ be the union of the path $h\sim i\sim j\sim l\sim k\sim r$ and $\mathcal{E}$.  Both graphs have the same degree sequence but $\sigma_U(x_1)=2$ whereas $\sigma_U(x_2)=1$.

(b): By (a) it is enough to work with $U$ with $n-1$ vertices that is not one of those mentioned. Collections of disjoint edges and stars (including forest of edges) have been covered in the proof of (a). If $U$ is disconnected and not one of these, then take two connected components with edges $ij$ and $kl$ in each component. Let $x_1$ be $U$, and define $x_2$ by removing $ij$ and $kl$ and connecting $i$ to $k$ and $j$ to $l$. We have that $\sigma_U(x_1)=1$ whereas $\sigma_U(x_2)=0$.

Hence, henceforth, assume that $U$ is connected. If there is an induced $P_4$ or $C_4$ $(i,j,k,l)$ in $U$, i.e., an induced path or cycle with $4$ nodes, then define $x_1$ and $x_2$ as follows: For $x_1$, add a vertex $h$ and connect it to $i$. For $x_2$, add $h$ and connect it to $l$, remove $kl$, and add $ik$. The graphs have the same degree sequence, but $\sigma_U(x_1)>0$ whereas $\sigma_U(x_2)=0$.

If there are no induced $P_4$ or $C_4$ the graph is trivially perfect (see \citet{brandstadt:etal:99} Section 7.1.2)  and there is a vertex $i$ that is adjacent to all other vertices in $U$. 

If among the neighbours of $i$, there are two non-adjacent pairs of vertices and an edge we have the following two cases:

1) Among the neighbours of $i$, suppose that there are two non-adjacent pairs of vertices $(j,k)$ and $(k,l)$ and a pair of adjacent vertices $(l,h)$, where $h$ may be the same as $j$. In this case, define $x_1$ by adding $u$ and connecting it to $k$, and define $x_2$ by adding $u$ and connecting it to $h$, removing $lh$, and connecting $kl$. The graphs have the same degree sequence, but $\sigma_U(x_1)>0$ whereas $\sigma_U(x_2)=0$.

2): Among the neighbours of $i$, suppose that there are two non-adjacent pairs of vertices $(j,k)$ and $(l,h)$ and a pair of adjacent vertices $(k,l)$, where $h$ may be the same as $j$. In this case, define $x_1$ by adding $u$ and connecting it to $h$, and define $x_2$ by adding $u$ and connecting it to $k$, adding $lh$, and removing $kl$. The graphs have the same degree sequence, but $\sigma_U(x_1)>0$ whereas $\sigma_U(x_2)=0$.

Therefore, the only cases that are left are stars, the complete graph, or the complete graph with one edge removed.
This completes the proof.
\halm \end{proof}

In general, if $x_1$ and $x_2$ have the same degree distributions, the fact that
their probabilities should be identical induces a linear restriction on the
parameters $(\nu_U, U\in \mathcal{U}_\nodeset)$ and hence the set of
exchangeable and summarized distributions on a nodeset $\nodeset$ is again a
linear exponential family with dimension equal to the number of distinct degree
distributions for simple graphs on $| \nodeset|$ nodes; however, the restrictions induced on the parameters in (\ref{eq:expexch}) may be complicated, as illustrated in the example below.

Similarly, if $x_1$ and $x_2$ have the same degree distributions, a linear restriction is induced on the parameters $(z_U, U\in \mathcal{U}_\nodeset)$ in (\ref{eq:mobexch}). This is also illustrated in the example below.
\begin{example}\label{ex:7} For networks with at most four nodes, the isomorphism class of a network is uniquely determined by the degree distribution, so we consider networks with five nodes. For most such networks, the isomorphism class is also identified by the degree distribution, the six exceptions being displayed in Fig.~\ref{fig:summarized}, where the node degrees of $x_1$ and $x_2$ both are $(2,2,2,1,1)$, those of $y_1$ and $y_2$ $(3,2,2,2,1)$, and those of $z_1$ and $z_2$ $(3,3,2,2,2)$.

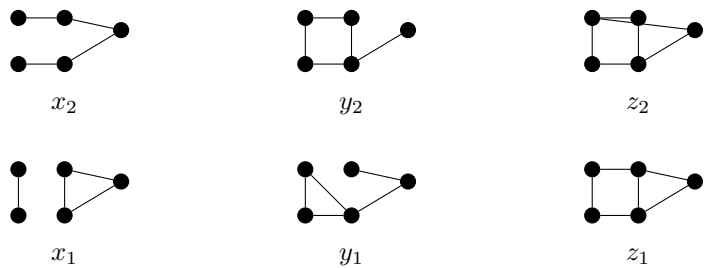
\begin{figure}[htb]
\centering
\begin{tikzpicture}[node distance = 4mm and 4mm, minimum width = 2mm]
    \begin{scope}
      \tikzstyle{every node} = [shape = circle,
      font = \tiny,
      minimum height = 2mm,
      inner sep = 2pt,
      draw = black,
      fill = black,
      anchor = center],
      text centered]
      \node(i) at (0,0) {};
      \node(j) [right = of i] {};
      \node(l) [above = of j] {};
			\node(k) [above = of i] {};
			\node(m) [above right = 3mm and 6mm of j] {};
      \node(i1) [above = 18mm of i] {};
      \node(j1) [right = of i1] {};
      \node(l1) [above = of j1] {};
			\node(k1) [above = of i1] {};
			\node(m1) [above right = 3mm and 6mm of j1] {};
			
			\node(i2) [right = 36mm of i] {};
      \node(j2) [right = of i2] {};
      \node(l2) [above = of j2] {};
			\node(k2) [above = of i2] {};
			\node(m2) [above right = 3mm and 6mm of j2] {};
			
			\node(i3) [above = 18mm of i2] {};
      \node(j3) [right = of i3] {};
      \node(l3) [above = of j3] {};
			\node(k3) [above = of i3] {};
			\node(m3) [above right = 3mm and 6 mm of j3] {};
			
			\node(i4) [left = 36mm of i] {};
      \node(j4) [right = of i4] {};
      \node(l4) [above = of j4] {};
			\node(k4) [above = of i4] {};
			\node(m4) [above right = 3mm and 6mm of j4] {};
			
			\node(i5) [above = 18mm of i4] {};
      \node(j5) [right = of i5] {};
      \node(l5) [above = of j5] {};
			\node(k5) [above = of i5] {};
			\node(m5) [above right = 3mm and 6 mm of j5] {};
			
    \end{scope}
		
    \begin{scope}
    \tikzstyle{every node} = [node distance = 4mm and 4mm, minimum width = 2mm,
    font= \small,
      anchor = center,
      text centered]
\node(a) [below = 2mm  of j]{$y_1$};
\node(b) [below = 2mm  of j1]{$y_2$};
\node(c) [below = 2mm  of j2]{$z_1$};
\node(d) [below = 2mm  of j3]{$z_2$};
\node(e) [below = 2mm  of j4]{$x_1$};
\node(f) [below = 2mm  of j5]{$x_2$};

\end{scope}
    \begin{scope}
      \draw (i) -- (j);
      \draw (j) -- (m);
			\draw (i) -- (k);
			\draw (j) -- (k);
			\draw (l) -- (m);
			
			\draw (i1) -- (j1);
      \draw (j1) -- (m1);
			\draw (i1) -- (k1);
			\draw (j1) -- (l1);
			\draw (l1) -- (k1);
			
			\draw (i2) -- (j2);
      \draw (j2) -- (m2);
			\draw (i2) -- (k2);
			\draw (j2) -- (l2);
			\draw (l2) -- (k2);
			\draw (l2) -- (m2);
			
			\draw (i3) -- (j3);
      \draw (j3) -- (m3);
			\draw (i3) -- (k3);
			\draw (j3) -- (l3);
			\draw (l3) -- (k3);
			\draw (k3) -- (m3);
			
			\draw (i4) -- (k4);
      \draw (j4) -- (l4);
			\draw (j4) -- (m4);
			\draw (l4) -- (m4);
			
			\draw (i5) -- (j5);
      \draw (j5) -- (m5);
			\draw (l5) -- (k5);
			\draw (l5) -- (m5);
    \end{scope}

    \end{tikzpicture}
		\caption{{\footnotesize Three pairs of non-isomorphic graphs with identical degree distributions.}}\label{fig:summarized}
		\end{figure}
		\noindent It thus follows that for any choice of canonical parameters
		$\nu \in \mathbb{R}^{ \mathcal{U}_{\nodeset} \setminus
	    \varnothing }$, the distribution $P_{\nu}$ of an exchangeable network
	    on five nodes is summarized if and only if
	    $P_{\nu}(x_1)=P_{\nu}(x_2)$, $P_{\nu}(y_1)=P_{\nu}(y_2)$, and
	    $P_{\nu}(z_1)=P_{\nu}(z_2)$.  Using the exponential representation
		(\ref{eq:expexch}) we obtain that the canonical parameters must
		satisfy three linear relations. From the relation
		$P_{\nu}(x_1)=P_{\nu}(x_2)$ we get
		\[\log\{P_{\nu}(x_2)/P_{\nu}(x_1)\}=\nu_{\n{\vcenter{\hbox{\includegraphics[scale=0.05]{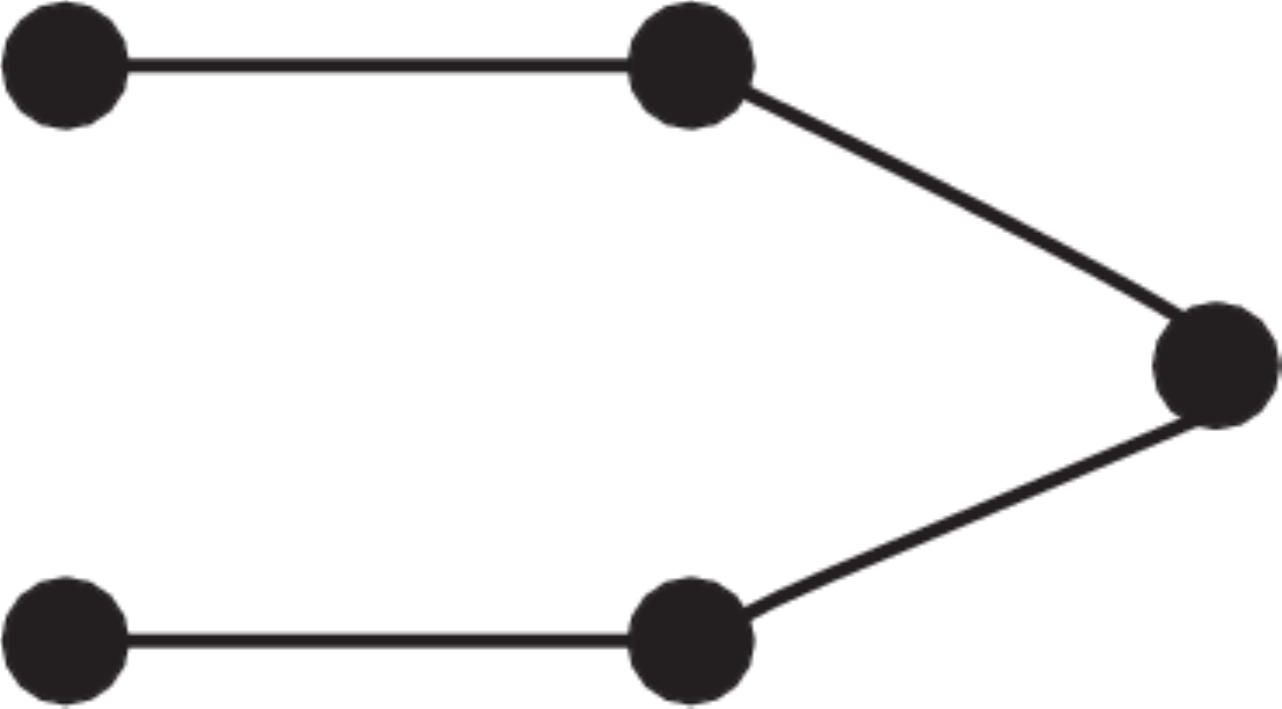}}}}}-\nu_{\n{\vcenter{\hbox{\includegraphics[scale=0.05]{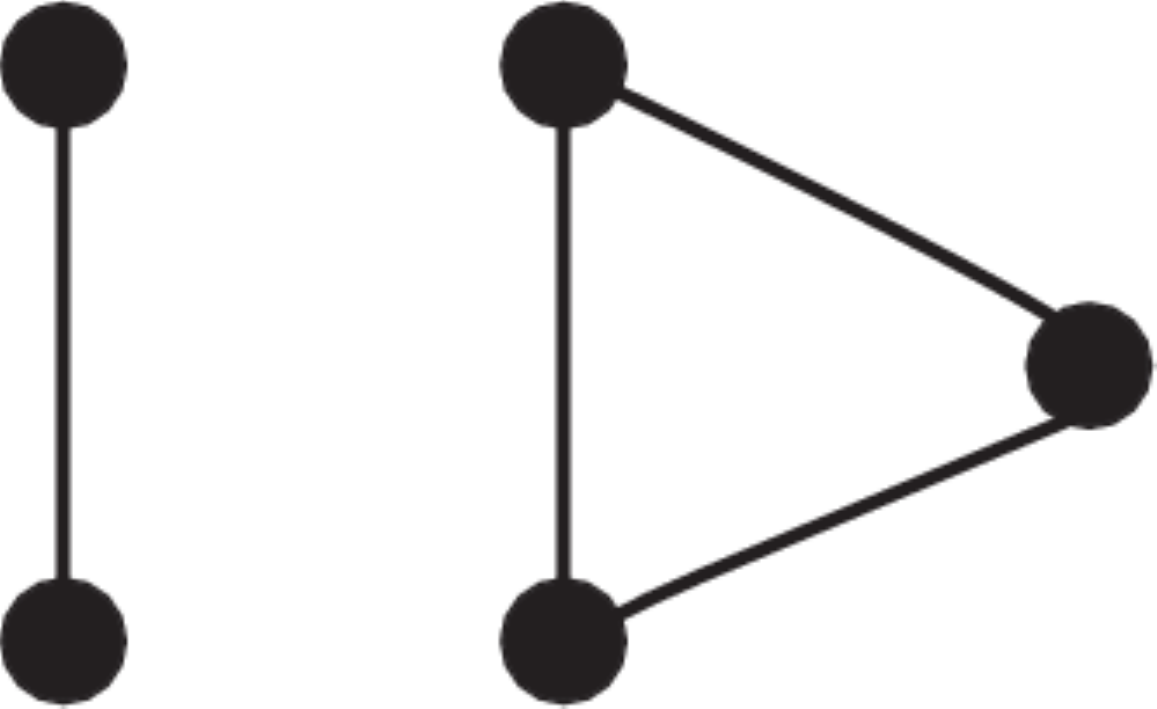}}}}}+2\nu_{\n{\vcenter{\hbox{\includegraphics[scale=0.05]{figureex2-6.pdf}}}}} -\nu_{\n{\vcenter{\hbox{\includegraphics[scale=0.05]{figureex2-4.pdf}}}}}-\nu_{\n{\vcenter{\hbox{\includegraphics[scale=0.05]{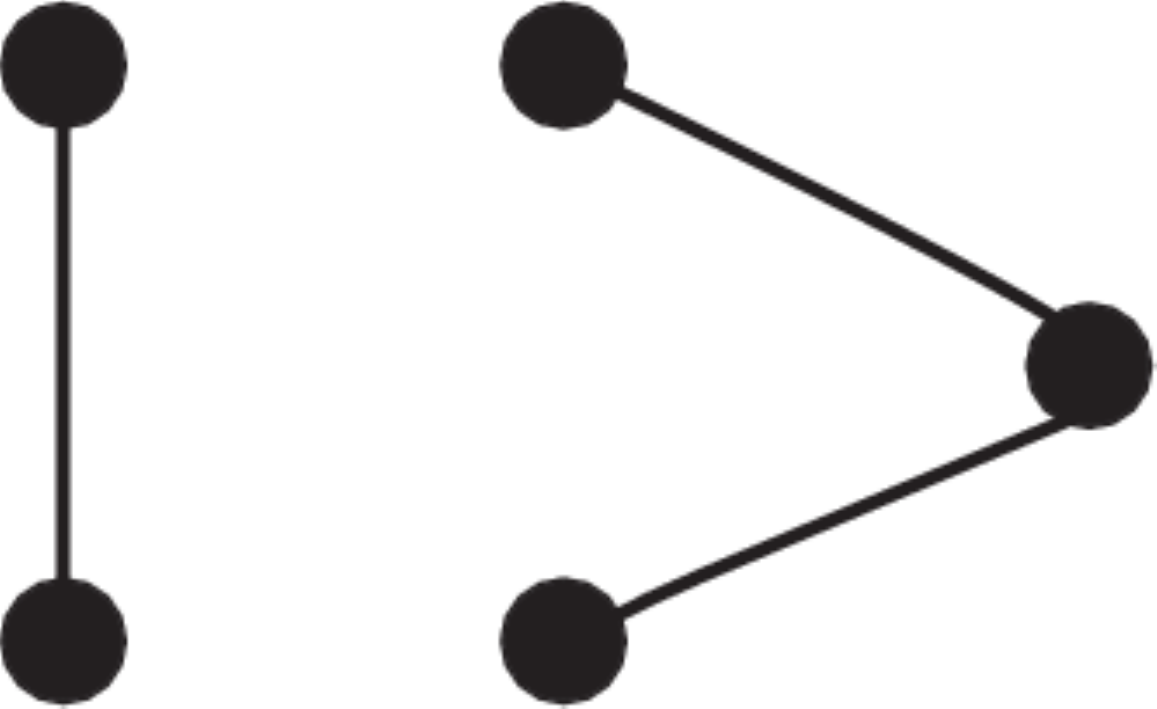}}}}}=0,\]
		where we have used Lemma~\ref{lem:3} to infer that all terms of the form $\nu_U\sigma_U(\cdot)$ must cancel if $U$ is a $k$-star or $U=\n{\vcenter{\hbox{\includegraphics[scale=0.05]{figureex2-3.pdf}}}}$.
		Similarly, we get from
		$P_{\nu}(y_1)=P_{\nu}(y_2)$ that
		\[\log \{P_{\nu}(y_2)/P_{\nu}(y_1)\} =
			\nu_{\n{\vcenter{\hbox{\includegraphics[scale=0.05]{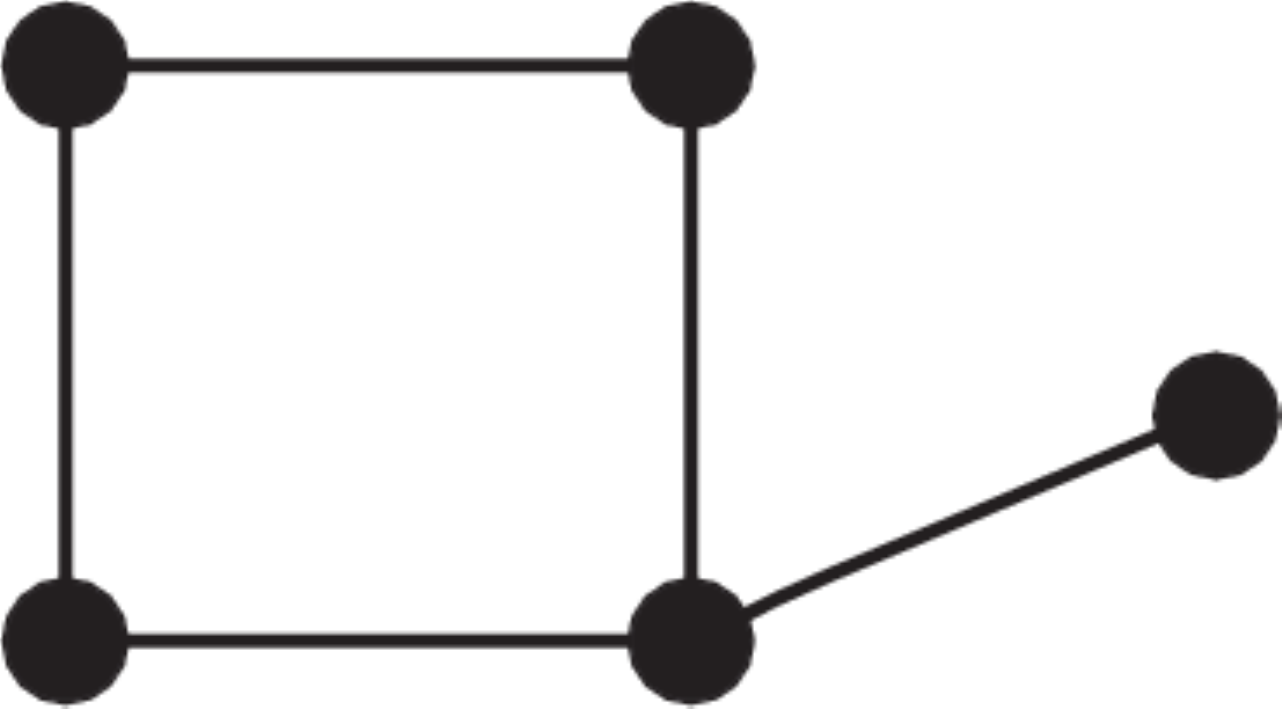}}}}}-\nu_{\n{\vcenter{\hbox{\includegraphics[scale=0.05]{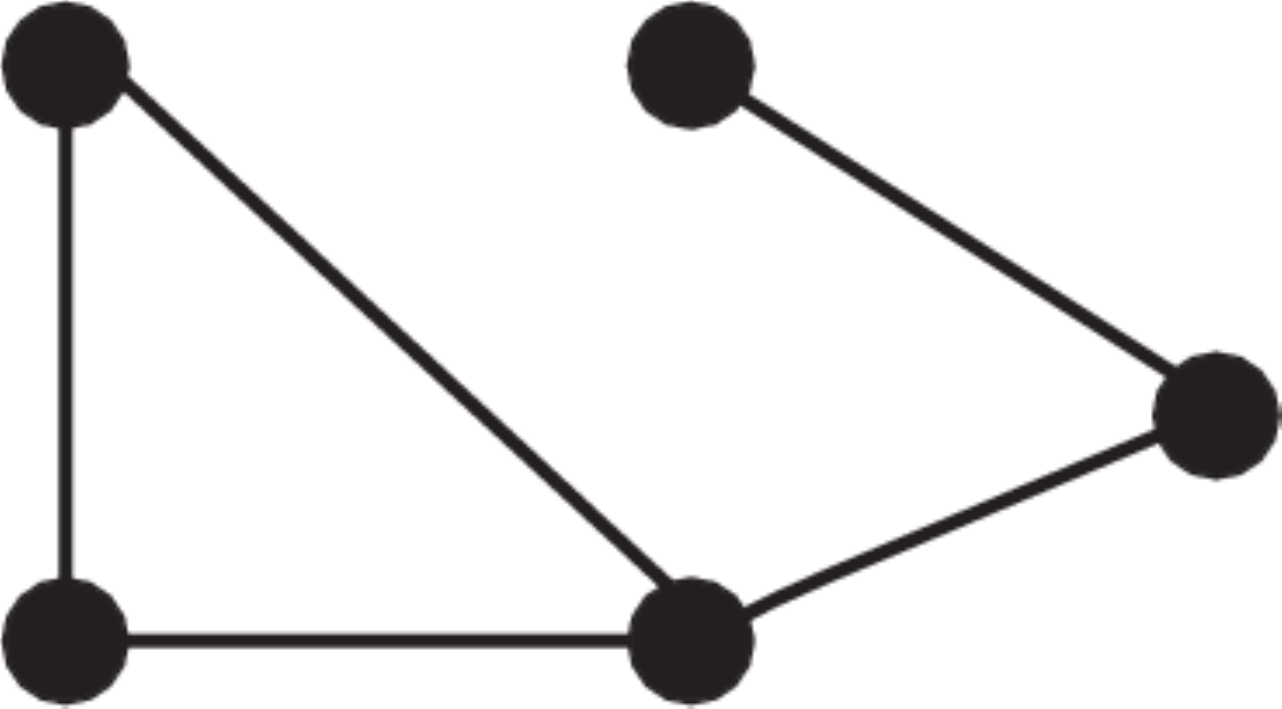}}}}}+\nu_{\n{\vcenter{\hbox{\includegraphics[scale=0.05]{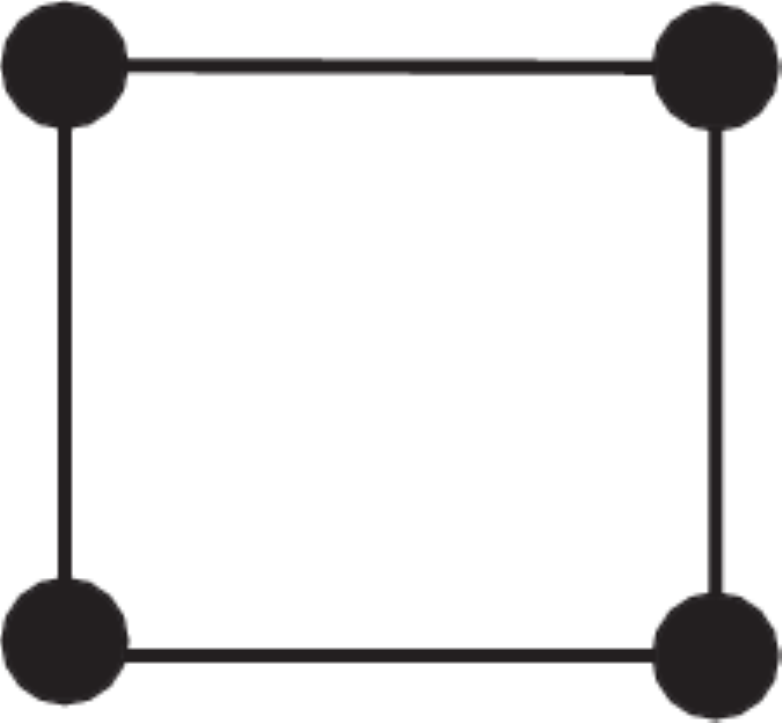}}}}}+\nu_{\n{\vcenter{\hbox{\includegraphics[scale=0.05]{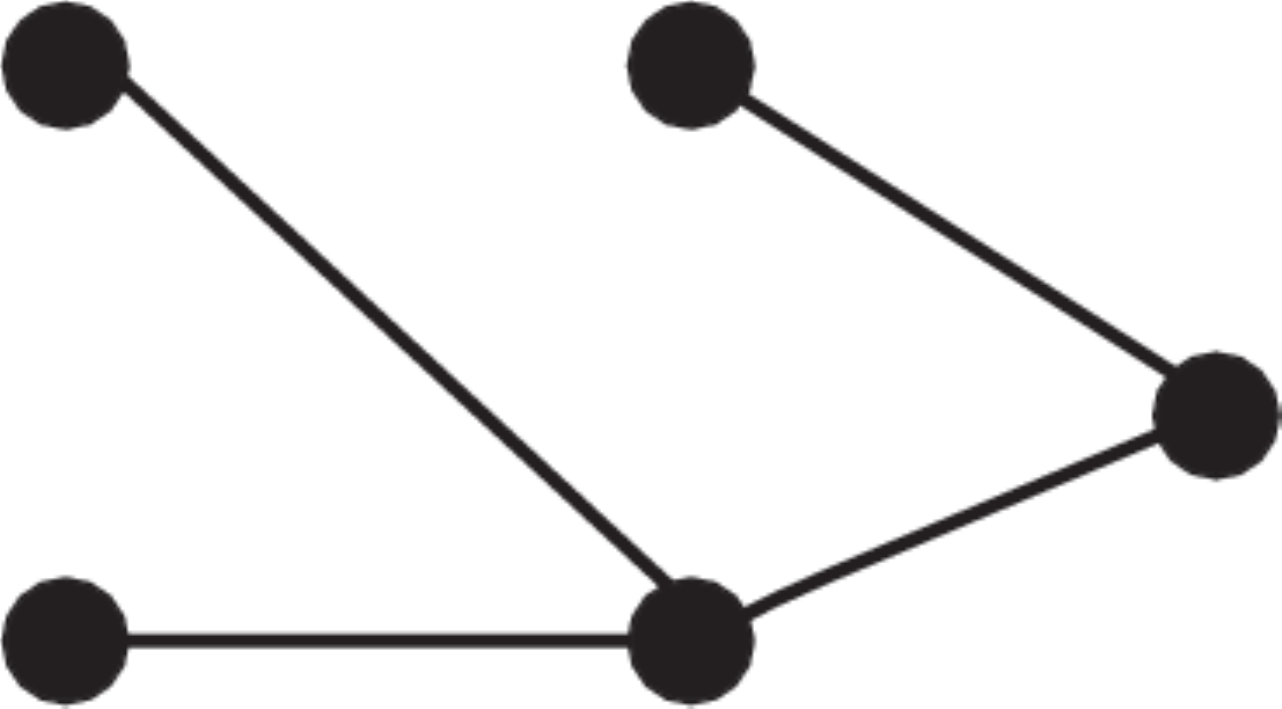}}}}} -\nu_{\n{\vcenter{\hbox{\includegraphics[scale=0.05]{figureex2-7.pdf}}}}} -\nu_{\n{\vcenter{\hbox{\includegraphics[scale=0.05]{figureex7-6.pdf}}}}}-\nu_{\n{\vcenter{\hbox{\includegraphics[scale=0.05]{figureex2-4.pdf}}}}} =0,\]
		and a somewhat more involved linear relation appears from the
		fact that $P_{\nu}(z_1)=P_{\nu}(z_2)$.
Similarly using (\ref{eq:mobexch}) we obtain that the M\"{o}bius parameters must
satisfy three linear relations.  The simplest of the three relations with
M\"{o}bius parameters for $P_{\nu}(z_1)=P_{\nu}(z_2)$ is
\[P_{\nu}(z_2)-P_{\nu}(z_1) =
			z_{\n{\vcenter{\hbox{\includegraphics[scale=0.05]{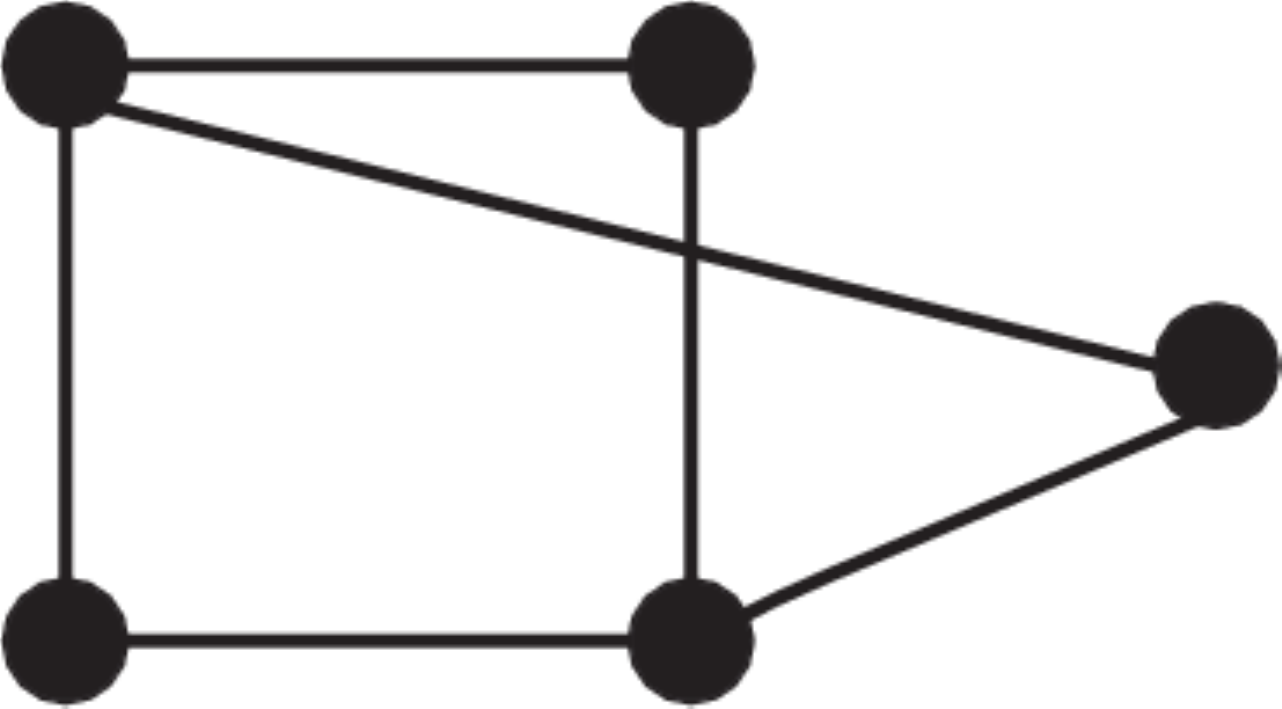}}}}}-z_{\n{\vcenter{\hbox{\includegraphics[scale=0.05]{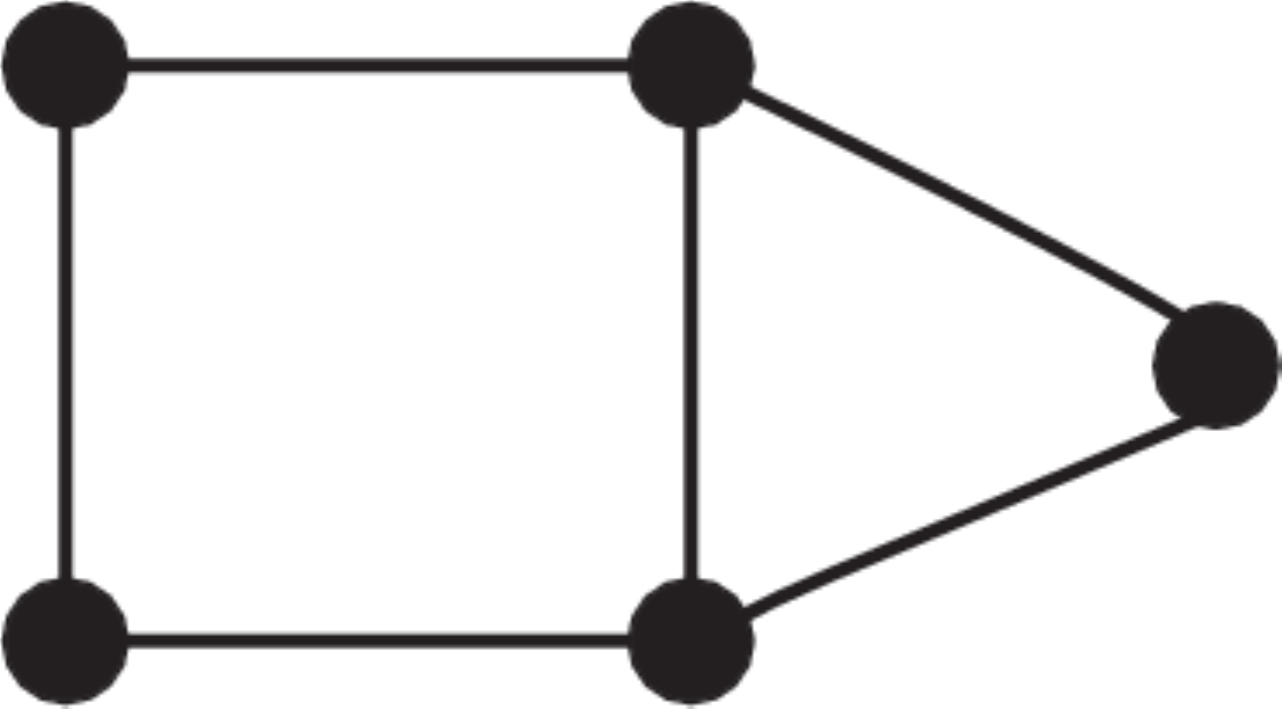}}}}}-z_{\n{\vcenter{\hbox{\includegraphics[scale=0.05]{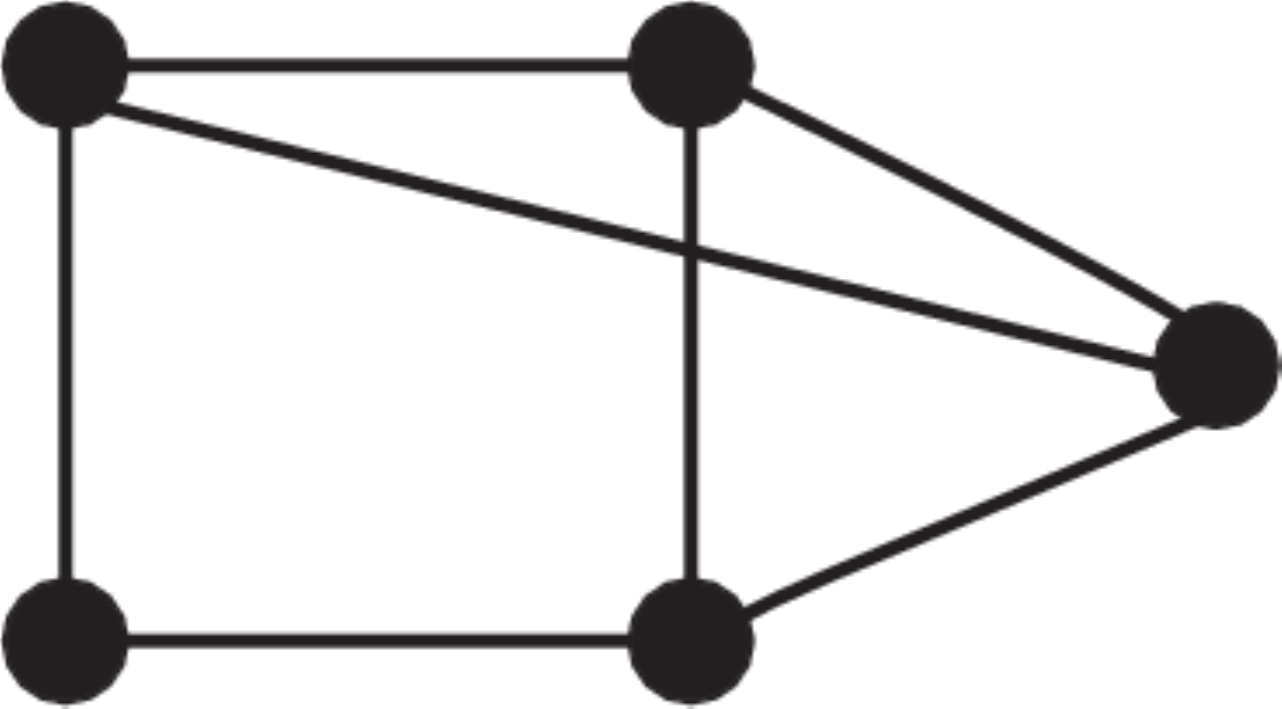}}}}}+z_{\n{\vcenter{\hbox{\includegraphics[scale=0.05]{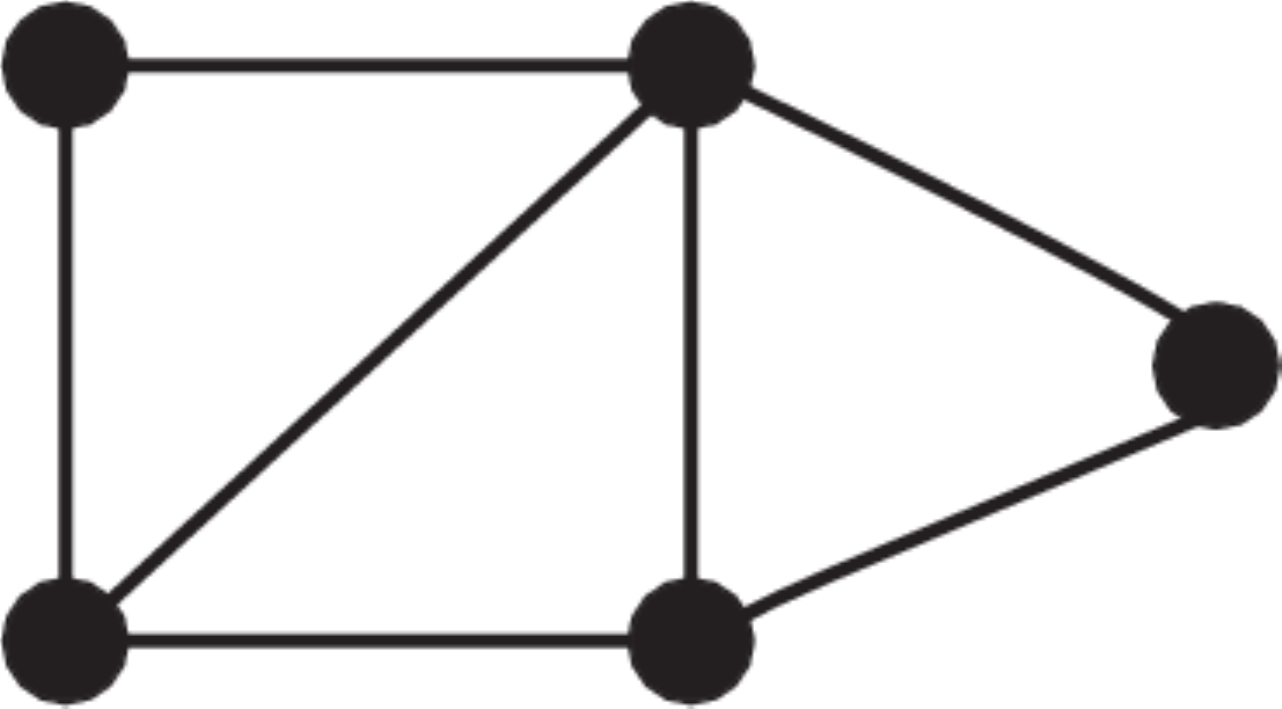}}}}} =0.\]
			We refrain from giving details of the corresponding
			constraints with M\"{o}bius parameters obtained from
			$P_{\nu}(x_1)=P_{\nu}(x_2)$ and
			$P_{\nu}(y_1)=P_{\nu}(y_2)$ as well as with canonical
			parameters obtained from $P_{\nu}(z_1)=P_{\nu}(z_2)$.
		\end{example}

\begin{coro}
Let $U$ be a finite unlabelled graph without isolated nodes. If there is a function
$\varphi_U$ so that $\sigma_U(x)=\varphi_U\{n_j(x),j=0,1,2,\ldots\}$ for all
networks $x$ of any size, then $U=S_k$, a $k$-star, or
$U={\n{\vcenter{\hbox{\includegraphics[scale=0.05]{figureex2-3.pdf}}}}}$, the disjoint union of two independent edges.
\end{coro}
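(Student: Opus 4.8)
The plan is to obtain this as an immediate consequence of Lemma~\ref{lem:onlyif}(a). The observation that makes everything work is that the hypothesis --- $\sigma_U$ being a function of the degree distribution for networks of \emph{every} size --- is considerably stronger than what is needed: it already forces the conclusion once it is applied on a single node set that is large relative to $U$.

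First I would fix $U$, let $m$ denote the number of non-isolated vertices of $U$, and choose any finite node set $\nodeset$ with $|\nodeset|\geq m+2$. Recalling that $\sigma_U(x)=\inj(U,x)/\inj(U,U)$ does not depend on which ambient node set $x$ is regarded as living in, the hypothesis guarantees in particular that $\sigma_U(x)$ is a function of the degree distribution of $x$ for all $x\in\mathcal{G}_{\nodeset}$. Since $U$ then has at most $|\nodeset|-2$ vertices, Lemma~\ref{lem:onlyif}(a) applies directly and yields that $U$ is a $k$-star $S_k$ or the disjoint union of two independent edges.

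The only point requiring a moment's care --- and the one I would emphasize in the write-up --- is why the ``complete graph'' exceptions $K_{|\nodeset|-1}$ and $K_{|\nodeset|-1}\setminus e$ from Lemma~\ref{lem:onlyif}(b) do not survive here. These are genuine exceptions only in the borderline regime $|\nodeset|=|V(U)|+1$; by instead taking $|\nodeset|$ strictly larger than $|V(U)|+1$ we land in the scope of part (a), where they are excluded. Equivalently, a complete graph $K_\ell$ with $\ell\geq 3$ is neither a star nor a pair of disjoint edges, so by Lemma~\ref{lem:onlyif}(a) its subgraph count ceases to be a function of the degree distribution as soon as the surrounding network has at least $\ell+2$ nodes.

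Finally, although the corollary only asserts the forward implication, I would remark that the converse is already contained in Lemma~\ref{lem:3}(a): for $U=S_k$ one has $\sigma_{S_k}(x)=\sum_{j}\binom{j}{k}n_j(x)$ (with the extra factor $1/2$ when $k=1$), and for $U$ equal to the disjoint union of two independent edges $\sigma_U(x)=\binom{|E(x)|}{2}-\sigma_{S_2}(x)$ with $|E(x)|=\tfrac12\sum_j j\,n_j(x)$; these formulas are visibly functions of the degree distribution and are independent of the size of the ambient network, so the graphs in the conclusion are exactly those with the stated property. I do not anticipate any genuine obstacle beyond being explicit about the choice of $\nodeset$ and about the size-independence of $\sigma_U$.
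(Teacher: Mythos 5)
Your argument is correct and is exactly the paper's: since the hypothesis holds for networks of every size, one may choose $\nodeset$ large enough that $U$ has at most $|\nodeset|-2$ vertices, so Lemma~\ref{lem:onlyif}(a) applies and the exceptional graphs $K_{|\nodeset|-1}$ and $K_{|\nodeset|-1}\setminus e$ from part (b) are excluded. The additional remarks on size-independence of $\sigma_U$ and the converse via Lemma~\ref{lem:3}(a) are accurate but not needed.
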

\begin{proof}
If this is true for all networks, we can always choose $\nodeset$ sufficiently large for Lemma~\ref{lem:onlyif}, (a) to apply.
\halm \end{proof}
\subsection{The SE* model}
Motivated by the results from the previous section,
we can now simplify the exchangeable ERGM (\ref{eq:expexch}) and
define an ERGM by the expression
\begin{equation}\label{eq:15}
    P_{\nu}(x)=
    \exp\Big\{\sum_{k=1}^{|\nodeset|-1}\sigma_k(x)\nu_k+\sigma_{\n{\vcenter{\hbox{\includegraphics[scale=0.05]{figureex2-3.pdf}}}}}\,\nu_{\n{\vcenter{\hbox{\includegraphics[scale=0.05]{figureex2-3.pdf}}}}}-\psi(\nu)
\Big\},
    \quad x \in \mathcal{G}_{\nodeset},
\end{equation}
where $\sigma_k(x)$ is the number of $k$-stars in $x$, $1 \leq k \leq
|\nodeset|-1$, and each of the
$|\nodeset|$ parameters
$\nu_{\n{\vcenter{\hbox{\includegraphics[scale=0.05]{figureex2-3.pdf}}}}},
\nu_1,\ldots, \nu_{|\nodeset|-1}$ vary independently on the real line.

We shall refer to this ERGM as the \emph{SE*-model}, reflecting that all
distributions within the model are both summarized and exchangeable, the star
indicating that the model is not containing all such distributions. This is
because neither the number of complete sub-graphs on $|\nodeset| - 1$ nor the
number of complete sub-graphs on $|\nodeset|-1$ nodes with one edge removed are
sufficient statistics for the SE* model, so that, in virtue of \Cref{lem:3} and
\Cref{lem:onlyif}, there exist summarized exchangeable network distributions
that are not in the SE* model. The distributions implied by the marginal beta
model, discussed below, are such an example.
We reserve the term \emph{SE-model} for the set of all distributions that are exchangeable and summarized and \emph{DE-model} for the set of all distributions that are exchangeable and dissociated.

We note the similarities between the SE*-model \eqref{eq:15} and the exchangeable
Markov model (EM-model) of \cite{fra86}, 
exponential family of the form
\begin{equation*}
    P_{\nu}(x)=\exp\left\{\sum_{k=1}^{|\nodeset|-1}\sigma_k(x)\nu_k+
    \sigma_{\n{\vcenter{\hbox{\includegraphics[scale=0.05]{figureex2-4.pdf}}}}}\nu_{\n{\vcenter{\hbox{\includegraphics[scale=0.05]{figureex2-4.pdf}}}}}-\psi(\nu)\right\},
    \quad x \in \mathcal{G}_{\nodeset}, \nu \in \mathbb{R}^{|\nodeset|},
\end{equation*}
which differs from SE*-model only by
$\sigma_{\n{\vcenter{\hbox{\includegraphics[scale=0.05]{figureex2-4.pdf}}}}}\nu_{\n{\vcenter{\hbox{\includegraphics[scale=0.05]{figureex2-4.pdf}}}}}$
replacing
$\sigma_{\n{\vcenter{\hbox{\includegraphics[scale=0.05]{figureex2-3.pdf}}}}}\,\nu_{\n{\vcenter{\hbox{\includegraphics[scale=0.05]{figureex2-3.pdf}}}}}$.
Though nearly identical, the justifications behind  the EM and the SE* models are quite
different. The EM model results from postulating certain symmetric Markov
properties for the edges, covered by case (b) in \Cref{prop:skeleton}.  On the
other hand,
the SE* model arises as a convenient sub-model of the  class of summarized and
exchangeable network distributions.
The intersection of the SE*-model and the EM-model only contains the counts
for $k$-stars in the exponent, and so does the intersection of the SE-model and
the EM
model.  The resulting \emph{SEM-model} is the ERGM with
sufficient statistics given by the entries  degree distribution, studied in
\cite{sadeghi:rinaldo:14}. This is because there is a one-to-one linear transformation between the number of $k$-stars of $x$ and the degree distribution of $x$, as demonstrated in the proof of Lemma~\ref{lem:3}.

From the computational standpoint, parameter estimation in the SE* model and in the
EM model can be carried out by the same procedures, such
as pseudo-maximum likelihood estimation and MCMCMLE methods --  see, e.g.,
\cite{CIS-93373}, \cite{geyer.thompson:1992} and \cite{ergm.package} -- and is
likely to be just as problematic.

\subsection{The marginal beta model}
Here we consider {\it the marginal beta model,} an example of a family of network distributions that are dissociated, exchangeable,
and summarized, and consequently satisfy all the conditions discussed above.

Consider the beta model defined in (\ref{eq:beta}). Suppose  that
$(B_i, i \in \nodeset )$ are independent and identically distributed real-valued random variables with distribution $F_B$. 
Then the expression (\ref{eq:beta}) can be understood as the conditional
distribution of $X_{ij}$ given $B_i$ and $B_j$ and thus
the density satisfies the factorization  $f(y)=\prod_{i}f(y_i\cd \pa(y_i))$ for the
directed acyclic graph (DAG) $D_n$ that consists of $(B_i, i \in \nodeset)$ and
$(X_{ij}, ij \in D(\nodeset))$ as vertices and arrows from each $B_i$ to all the
$X_{ij}$, $j \in \nodeset$.

Figure \ref{fig:1} illustrates the case for
$\nodeset = \{1,2,3,4\}$.
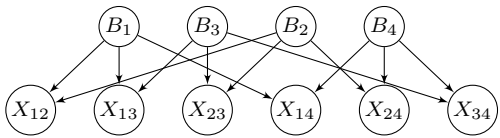
\begin{figure}[htb]
\centering
\begin{tikzpicture}[node distance = 5mm and 5mm, minimum width = 4mm]
    \begin{scope}
      \tikzstyle{every node} = [shape = circle,
      font = \scriptsize,
      minimum height = 4mm,
      inner sep = 1pt,
      draw = black,
      fill = white,
      anchor = center],
      text centered]
      \node(12) at (0,0) {$X_{12}$};
      \node(13) [right = of 12] {$X_{13}$};
			
      \node(23) [right = of 13] {$X_{23}$};
			\node(14) [right = of 23] {$X_{14}$};
			\node(24) [right = of 14] {$X_{24}$};
      \node(34) [right = of 24] {$X_{34}$};
			\node(b1) [above = of 13] {$B_1$};
			\node(b2) [above = of 14] {$B_2$};
			\node(b3) [above = of 23] {$B_3$};
			\node(b4) [above = of 24] {$B_4$};
    \end{scope}

    \begin{scope}[->, > = latex']
      \draw (b1) -- (12);
      \draw (b1) -- (13);
			\draw (b1) -- (14);
			\draw (b2) -- (12);
      \draw (b2) -- (23);
			\draw (b2) -- (24);
			\draw (b3) -- (13);
			\draw (b3) -- (23);
			\draw (b3) -- (34);
			\draw (b4) -- (14);\draw (b4) -- (34);\draw (b4) -- (24);

    \end{scope}

    \end{tikzpicture}
  \caption[]{\small{Directed acyclic graph $D_4$.}}
     \label{fig:1}
\end{figure}
Indeed the exact same graph also illustrates the Markov structure for a general $\phi$-matrix, where just $B_i$ should be replaced by uniformly distributed $U_i$  and the arrows represent the $\phi$s.

By marginalization over and conditioning on the variables in the DAG model the resulting dependence structure can in general be captured by \emph{mixed graphs}; for further details see, for example, \citet{ric02} and \citet{sad13}.
For example, by conditioning on the $B_i$ we obtain the empty graph with all
the $X_{ij}$ as vertices. This implies the complete independence of $X_{ij}$, corresponding to the dyadic independence in the beta model (\ref{eq:beta}).
By marginalizing over $B_i$, the bidirected incidence graph appears, as studied
extensively in this paper and exemplified in Figure~\ref{fig:2}. As discussed
before, this implies that after marginalization over $B_i$, the
network is dissociated. We refer to this model as the \emph{marginal beta model}; the
dissociatedness also follows immediately  from the following ``de Finetti type"
representation for the distribution of $X$:
\begin{equation}\label{eq:5}
    \mathbb{P}(X = x) =\int_{\mathbb{R}^n}\prod_{ij \in D(\nodeset)
    }\frac{e^{(\beta_i+\beta_j)x_{ij}}}{1+e^{(\beta_i+\beta_j)}}dF^{\nodeset}_B(\beta),
    \quad x \in \mathcal{G}_{\nodeset},
\end{equation}
where $F^{\nodeset}_B$ denotes the $|\nodeset|$-fold product distribution from
$F_B$.
The function $\phi$ in Proposition \ref{prop:1n} is in fact $p_{ij}$ as described in (\ref{eq:beta}). It clearly follows from  (\ref{eq:5}) that the marginal beta distribution is exchangeable;
in addition, (\ref{eq:betarep}) implies that the marginal beta model is summarized.

 \section{Consistent systems of network models}\label{sec:consistent}
In this section we shall consider systems of network models and their relation
to each other. We let $\nodeset$ be the basic set of network nodes and consider
models for finite subsets $\nodeset'\subseteq\nodeset$.  The
basic set $\nodeset$ may be finite or countably infinite.  If $P_{\nodeset'}$ is
a distribution of a random network $X_{\nodeset'}$ with nodeset $\nodeset'$ and
$\nodeset''\subseteq \nodeset'$ we let
$\Pi^{\nodeset''}_{\nodeset'}P_{\nodeset'}$ denote the distribution of the
induced subnetwork $X_{\nodeset''}$ under $P_{\nodeset'}$. Thus the
marginalization
operator
$\Pi^{\nodeset'}_{\nodeset''}$ takes as input a probability distribution on
$\mathcal{G}_{\nodeset'}$ and  returns the induced or marginal probability distribution over
$\mathcal{G}_{\nodeset''}$
corresponding to the dyads with endpoints in $\nodeset''$.
By constructions, marginalization operators satisfy the property
\[\Pi^{\nodeset'}_{\nodeset'''} =\Pi^{\nodeset''}_{\nodeset'''}\circ\Pi^{\nodeset'}_{\nodeset''}.\]
for any $\nodeset''' \subset \nodeset'' \subset \nodeset'$, so they form a
\emph{projective system}.
A collection $\{\mathcal{P}_{\nodeset'}, \nodeset'\subseteq \nodeset\}$ of random network models for all finite subsets $\nodeset'$ of $\nodeset$ is said to be (strongly) \emph{consistent} if for every $\nodeset''\subseteq \nodeset'$ in the collection it holds that
\[\mathcal{P}_{\nodeset''}=\Pi_{\nodeset''}^{\nodeset'}(\mathcal{P}_{\nodeset'}),\]
in other words it holds that the marginal to a subnetwork of any network model is identical to the corresponding network model for the subnetwork.
Thus a strongly consistent system forms a \emph{projective statistical field} in the sense of
\cite{lauritzen:88}. We say  the system is \emph{weakly consistent} if
\[\Pi_{\nodeset''}^{\nodeset'}(\mathcal{P}_{\nodeset'})\subseteq \mathcal{P}_{\nodeset''}\] i.e.\ marginalization of any network model for a larger nodeset is not in conflict with the network model for the subnetwork.

We first note that the bidirected Markov property itself is always \emph{marginalizable} in the  sense that if $X$ is bidirected Markov w.r.t.\ a graph $G$, then the marginal distribution of $X_A$ is automatically Markov w.r.t.\ the induced subgraph $G_A$, whereas the similar statement is generally not true for the undirected Markov property.

This holds in particular for the random network models when $G(\nodeset')=L_{\leftrightarrow}(\nodeset')$ is the bidirected incidence graph. In other words, \emph{the system of dissociated random network models is weakly consistent}. 
The same holds for the system of exchangeable random networks
$\{\mathcal{E}_{\nodeset'}, \nodeset'\subseteq \nodeset\}$, and when both
exchangeability and dissociatedness are combined to form the system of
exchangeable and dissociated random networks  $\{\mathcal{E}^{L_{\leftrightarrow}}_{\nodeset'},\nodeset'\subseteq \nodeset\}$, or when considering the system of models
$\{\mathcal{E}^{L^c_{\leftrightarrow}}_{\nodeset'},\nodeset'\subseteq \nodeset\}$, which are all Markov w.r.t.\ the bidirected complements of the line graphs.

Correspondingly, the M\"obius parametrization is  \emph{marginalizable} in the sense that if $X$ is a random network with nodeset $\nodeset$ and $\nodeset'\subseteq \nodeset$, the M\"obius parameters $z^{\nodeset'}_B$  for the induced subnetwork are   identical to the corresponding parameters $z^{\mathcal N}_B$ for $X$:
\begin{equation}\label{eq:marginalizable}z^{\nodeset'}_B=z^{\nodeset}_B=z_B,
    \mbox{ for all } B\in \mathcal{B}(\nodeset').\end{equation}

From any weakly consistent system $\{\mathcal{P}_{\nodeset'}, \nodeset'\subseteq \nodeset\}$  we can construct a strongly consistent system $\{\mathcal{P}^*_{\nodeset'}, \nodeset'\subseteq \nodeset\}$,  by letting
\[\mathcal{P}^*_{\nodeset'} = \bigcap_{\nodeset'':\nodeset''\supset\nodeset'}\Pi_{\nodeset'}^{\nodeset''}(\mathcal{P}_{\nodeset''})\]
provided that the intersection is non-empty. Thus each family in the system
consists exactly of those distributions that can be obtained as marginals of
distributions from models on arbitrarily large nodesets.  We shall thus say that the elements in the strongly consistent variant of a weakly consistent system are  \emph{completely extendable}.

We note that, for example, the system of exchangeable random networks is not strongly consistent. In Example~\ref{ex:exch}, the estimated distributions  are easily seen not to be extendable to a network with more than four nodes.
In general it is a  difficult issue to determine whether a given system of
M\"obius parameters $\{z^{\nodeset'}_B, B\in \mathcal {B}(\nodeset')\}$  is
extendable to a larger network with nodeset $\mathcal{N}$, i.e.\ if there
exists a system of M\"obius parameters $\{ z^{\nodeset}_B, B \in
\mathcal{B}(\nodeset)\}$, such that (\ref{eq:marginalizable}) holds.

Nevertheless, we can characterize the strongly
consistent sequences of exchangeable random networks as follows. Below, if $\nodeset$ is infinite, we let $\mathcal{B}(\nodeset)$ be the
set of all finite subsets of $D(\nodeset)$, which, as before, we represent as
finite edge-induced subgraphs of the complete graph on $\nodeset$.
\begin{prop}\label{prop:mobiusinfinite} All strongly consistent and infinitely
    extendable systems of exchangeable M\"obius parameters $( z_B, B \in
    \mathcal{B}(\nodeset) )$ are those which satisfy the conditions
 \begin{enumerate}[\rm (a)]
 \item $z_{\varnothing}=1$;
 \item $z_B = z_{B'} = z_U$ if $B$ and $B'$ are in $[U]$;
 \item   
    for all
    non-negative integers $n$ and all  $\nodeset' \subset \nodeset$ with $|
    \nodeset'|=n$
    \[
	\sum_{U\in \mathcal{U}_{\nodeset'}}(-1)^{|U|-|x|}r_U(x)z_U \geq 0, \quad \forall x
	\in \mathcal{G}_{\nodeset'},
    \]
\end{enumerate}
where we recall that $r_U(x)$ is the number of graphs in $[U]$ that contain $x$
as a subgraph.
In addition, the sequences of M\"{o}bius parameters corresponding to the
extremal distribution $\mathcal{E}_\infty$ 
are those satisfying the dissociated property that
if $B$ has connected components $C_1,\ldots,C_k$  then $z_B=z_{C_1}\cdots z_{C_k}$.
\end{prop}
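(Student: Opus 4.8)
The plan is to recognize conditions~(a)--(c) as exactly what is needed for the collapsed M\"obius formula~(\ref{eq:mobexch}) to define, for every finite $\nodeset'$, a bona fide exchangeable distribution on $\mathcal{G}_{\nodeset'}$, and then to glue these distributions into one on $\mathcal{G}_{\nodeset}$ via Kolmogorov's extension theorem. For the ``only if'' direction I would take the exchangeable network $X$ on the countably infinite node set $\nodeset$ whose existence is guaranteed by strong consistency and infinite extendability and set $z_B=\mathbb{P}(B\subseteq X)$: condition~(a) is the definition $z_{\varnothing}=1$; condition~(b) is the remark from Section~\ref{sec:exch.parametrization} that $B\iso B'$ forces $\mathbb{P}(B\subseteq X)=\mathbb{P}(B'\subseteq X)$, so $z_B$ depends on $B$ only through its isomorphism class; and condition~(c) holds because, by~(\ref{eq:mobexch}) applied to the induced subnetwork $X_{\nodeset'}$, the displayed alternating sum equals $\mathbb{P}(X_{\nodeset'}=x)\ge 0$ for every finite $\nodeset'\subset\nodeset$.

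For the ``if'' direction, assume~(a)--(c) and, for each finite $\nodeset'$, define $P_{\nodeset'}$ on $\mathcal{G}_{\nodeset'}$ by the right-hand side of~(\ref{eq:mobexch}). These numbers are non-negative by~(c); and, since~(\ref{eq:mobexch}) is nothing but the M\"obius inversion~(\ref{eq:mobius}) with isomorphic terms collected, summing over all $x\in\mathcal{G}_{\nodeset'}$ and invoking the inverse identity $z_B=\sum_{H:B\subseteq H}\mathbb{P}(X_H=1_H,X_{H^c}=0_{H^c})$ at $B=\varnothing$ shows the total is $z_{\varnothing}=1$. Hence each $P_{\nodeset'}$ is a probability measure, and it is exchangeable by~(b). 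The family $\{P_{\nodeset'}\}$ is projective: for $\nodeset''\subseteq\nodeset'$ and $B\in\mathcal{B}(\nodeset'')$, the event $\{B\subseteq X_{\nodeset''}\}$ involves only dyads internal to $\nodeset''$, so its probability under the induced marginal of $P_{\nodeset'}$ equals $\mathbb{P}(B\subseteq X_{\nodeset'})=z_B$; by injectivity of the M\"obius parametrization the induced marginal therefore coincides with $P_{\nodeset''}$, which is relation~(\ref{eq:marginalizable}) and gives $\Pi^{\nodeset'}_{\nodeset''}P_{\nodeset'}=P_{\nodeset''}$. Since $\nodeset$ is countable, Kolmogorov's extension theorem yields an exchangeable $P$ on $\mathcal{G}_{\nodeset}$ with these marginals, so the system is strongly consistent and infinitely extendable.

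The extremal claim then follows by combining the fact that $P\in\mathcal{E}_\infty$ if and only if $P$ is exchangeable and dissociated \citep{aldous:81,aldous:85} with Lemma~\ref{coro:dissocnet}, which identifies dissociatedness with the product rule $z_B=\prod_i z_{C_i}$ over the connected components $C_i$ of every non-empty $B$; independence of infinitely many dyads reduces to that of finitely many, so no additional care is needed. Hence the M\"obius-parameter sequences attached to elements of $\mathcal{E}_\infty$ are precisely those in the list of~(a)--(c) that in addition obey this product rule.

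Most of this is bookkeeping: the argument simply chains the M\"obius inversion formulae, the marginalizability relation~(\ref{eq:marginalizable}), and Kolmogorov's extension theorem. The one point that genuinely needs attention --- and which I would regard as the main (if modest) obstacle --- is the projectivity step, i.e.\ verifying that restricting the M\"obius parameters to a sub-node-set is the same operation as forming the induced-subnetwork marginal; once that is settled, nothing deeper remains.
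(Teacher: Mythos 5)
Your proposal is correct and follows essentially the same route as the paper's (much terser) proof: marginalizability of the M\"obius parameters, i.e.\ relation~(\ref{eq:marginalizable}), gives projectivity; validity of the parameters is exactly $z_{\varnothing}=1$ plus the non-negativity of the alternating sums in~(\ref{eq:mobexch}); and the extremal case follows from the characterization of $\mathcal{E}_\infty$ as the exchangeable dissociated laws together with the product rule of Lemma~\ref{coro:dissocnet}. You merely make explicit the normalization check and the Kolmogorov-extension gluing step that the paper leaves implicit.
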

\begin{proof}From (\ref{eq:marginalizable}) it follows that any system of valid
    M\"obius parameters is consistent and as previously argued, these are valid
    if and only if they satisfy $z_\varnothing=1$ and the non-negativity
    restriction in equation \eqref{eq:mobexch} above. We have also previously argued that the additional product condition characterizes dissociated networks which are the extreme points according to de Finetti's theorem (Proposition \ref{prop:1n}).\halm
\end{proof}
It is worth pointing out that, when $|\nodeset| = \infty$, if $X$ is a random
network on $\nodeset$ whose distribution  is extremal (and therefore
dissociated), then, for each non-empty
$U \in \mathcal{U}_{\nodeset'}$ with $|\nodeset'| < \infty$,
\begin{equation}\label{eq:zu.infty}
    z_U = \int_{[0,1]^{\nodeset'}} \prod_{ij \in E(U)} \phi(u_i,u_j) du,
\end{equation}
where $E(U)$ denotes the set of edges in $U$ and $\phi$ is some
measurable, symmetric function
from $[0,1]^2$ into $[0,1]$.

Using the arguments in Chapter 14 of  \cite{aldous:85} \citep[see also][for a
generalization]{kallenberg:05}, de Finetti's theorem (Proposition \ref{prop:1n})
and Proposition~\ref{prop:mobiusinfinite} yields the following
result.
\begin{coro}\label{cor:7}
The convex set of feasible M\"obius parameters are those which can be
represented as
\begin{equation}\label{eq:mixmobius}
z_B=\int_{[0,1]}\left\{\int_{[0,1]^n}\prod_{ij\in
B}\phi(u_i,u_j,\lambda)\,du\right\}\,d\lambda, \quad B \in
\mathcal{B}(\nodeset),
\end{equation}
where $n=|B|$ and 
$\phi$ is a (not necessarily unique) measurable function from $[0,1]^3$ to
$[0,1]$. The extreme points of this convex set correspond to the cases in
which the
function $\phi(u_i,u_j,\lambda)$ is a constant function of $\lambda$,  for almost all $\lambda\in [0,1]$.
\end{coro}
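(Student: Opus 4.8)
The plan is to transfer the statement, via Proposition~\ref{prop:mobiusinfinite}, from Möbius parameters to exchangeable laws on a countably infinite node set, decompose such laws into their dissociated (ergodic) components, and then simply read off the Möbius parameters of the decomposition. First I would note that the map $P\mapsto\big(z_B(P)\big)_{B\in\mathcal{B}(\nodeset)}$, with $\nodeset$ countably infinite and $z_B(P)=P(B\subseteq X)$, is an affine bijection between $\mathcal{P}_\infty$ and the convex set of feasible Möbius parameters: it is affine since each $z_B$ is linear in $P$, injective by Möbius inversion~\eqref{eq:mobius}, and its range is exactly the feasible set by Proposition~\ref{prop:mobiusinfinite}. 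Since an affine bijection carries extreme points onto extreme points in both directions, the extreme points of the feasible set are precisely the Möbius parameters of the members of $\mathcal{E}_\infty$.

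Next I would invoke the simplex structure of $\mathcal{P}_\infty$ with extreme set $\mathcal{E}_\infty$ --- the ergodic decomposition for jointly exchangeable arrays (\cite{aldous:85}, Ch.~14; see also \cite{kallenberg:05}) --- so that every $P\in\mathcal{P}_\infty$ is the barycenter of a probability measure $\mu_P$ on $\mathcal{E}_\infty$. By Proposition~\ref{prop:1n}, $\mathcal{E}_\infty$ is, modulo measure-preserving transformations of $[0,1]$, the space of graphons, which is a standard Borel space; hence $\mu_P$ can be realized as the law of a jointly measurable family $\lambda\mapsto\phi(\cdot,\cdot,\lambda)$ of graphons with $\lambda$ uniform on $[0,1]$. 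Equivalently, this is the Aldous--Hoover representation of a general exchangeable binary array: there are i.i.d.\ uniform $\Lambda,U_1,U_2,\dots$ and a measurable $\phi:[0,1]^3\to[0,1]$ with $\mathbb{P}(X_{ij}=1\mid\Lambda,U)=\phi(U_i,U_j,\Lambda)$, the coordinate $\Lambda$ selecting the ergodic component.

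For such a $P$ I would then compute, conditioning on $\Lambda$ and applying~\eqref{eq:zu.infty} inside the conditional,
\[
  z_B=\mathbb{P}(X_B=1_B)=\mathbb{E}\!\left[\prod_{ij\in B}\phi(U_i,U_j,\Lambda)\right]
  =\int_{[0,1]}\!\left\{\int_{[0,1]^n}\prod_{ij\in B}\phi(u_i,u_j,\lambda)\,du\right\}d\lambda ,
\]
which is~\eqref{eq:mixmobius} with $n=|B|$; conversely, any measurable $\phi:[0,1]^3\to[0,1]$ defines through this Bernoulli mixture an exchangeable, automatically consistent network law on $\nodeset$, so the right-hand sides of~\eqref{eq:mixmobius} exhaust the feasible set. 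For the extreme-point assertion I would combine the affine-bijection observation with the second half of Proposition~\ref{prop:mobiusinfinite}: the extreme feasible parameters are the dissociated ones, $z_B=\prod_i z_{C_i}$ over the connected components $C_i$ of $B$, equivalently those of the form~\eqref{eq:zu.infty}. Since $P\mapsto z(P)$ is injective and $\mathcal{P}_\infty$ a simplex, a representation~\eqref{eq:mixmobius} yields a dissociated (hence extremal) law exactly when $\mu_P$ is a point mass, i.e.\ when $\phi(u_i,u_j,\lambda)$ is a constant function of $\lambda$ for almost every $\lambda\in[0,1]$; in the other direction such a $\phi$ gives $z_B=\int_{[0,1]^n}\prod_{ij\in B}\phi(u_i,u_j)\,du$, which is dissociated by construction.

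The main obstacle I anticipate is purely measure-theoretic: making rigorous the passage from the abstract mixing measure on $\mathcal{E}_\infty$ to a genuine Lebesgue integral over $\lambda\in[0,1]$, which requires that the space of graphons be standard Borel and that the selected family $\lambda\mapsto\phi(\cdot,\cdot,\lambda)$ be measurable jointly in all three arguments --- this is exactly where the cited arguments of \cite{aldous:85} and \cite{kallenberg:05} are needed. A lighter secondary point is to justify that genuinely distinct ergodic components cannot be combined into a dissociated law, which follows from injectivity of $P\mapsto z(P)$ together with the fact that non-equivalent graphons index distinct distributions, so that any nontrivial mixture fails to be extremal.
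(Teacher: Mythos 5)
Your argument is correct and follows essentially the same route the paper takes, namely combining the ergodic decomposition of exchangeable arrays from Chapter~14 of \cite{aldous:85} with the graphon representation of Proposition~\ref{prop:1n} and the Möbius characterization in Proposition~\ref{prop:mobiusinfinite}; the paper states the corollary with only this citation, whereas you supply the details (the affine bijection onto $\mathcal{P}_\infty$, the randomization of the mixing measure over $\lambda\in[0,1]$, and the conditional computation of $z_B$). The measure-theoretic point you flag is precisely what the citation to \cite{aldous:85} and \cite{kallenberg:05} is meant to cover, so no gap remains.
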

Still, these characterizations are quite implicit. It seems difficult to obtain more explicit characterizations. For example, Figure 16.1 on page 288 of \cite{lov12} displays the area of variation of
$(
z_{\n{\vcenter{\hbox{\includegraphics[scale=0.05]{figureex2-1.pdf}}}}},z_{\n{\vcenter{\hbox{\includegraphics[scale=0.05]{figureex2-4.pdf}}}}})$
as given in \eqref{eq:zu.infty} for all exchangeable and dissociated infnite
networks, and this is a very complicated region. The  set of infinitely extendable pairs is given by the convex hull of the region
\citep{bollobas:76}.

Using Corollary~\ref{cor:7} we can show that strongly consistent systems of
exchangeable models which are Markov w.r.t.\ bidirected complement of the
incidence graph must be Erd{\"o}s--Renyi models. First, we need a preliminary
lemma. The lemma and proof are analogues of Theorem 10 in  \cite{diaconis:freedman:81}.

\begin{lemma}\label{lem:erdosrenyi}Assume $X$ is an exchangeable random network
    with an infinitely extendable set of M\"obius parameters satisfying, for
    some $\eta \in (0,1)$,
\[
    z_{\n{\vcenter{\hbox{\includegraphics[scale=0.05]{figureex2-1.pdf}}}}}=\eta,\quad
    z_{\n{\vcenter{\hbox{\includegraphics[scale=0.05]{figureex2-2.pdf}}}}}=\eta^2,\quad
    z_{\n{\vcenter{\hbox{\includegraphics[scale=0.05]{figureex6-1.pdf}}}}}=\eta^4.\]
Then, the edges in $X$ are independent and identically distributed with $P(X_{ij}=1)=\eta$, i.e.\ $X$ is Erd\"os--Renyi.
\end{lemma}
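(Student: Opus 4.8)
The plan is to reduce the lemma to an analytic statement about graphons and then to a spectral argument. By hypothesis the M\"obius parameters $(z_B)$ of $X$ are infinitely extendable and $X$ is exchangeable, so Corollary~\ref{cor:7} (equivalently, de Finetti's theorem, Proposition~\ref{prop:1n}, followed by a mixture over its extreme points) furnishes a measurable $\phi\colon[0,1]^3\to[0,1]$, which, $X$ being a network, may be taken symmetric in its first two arguments, such that
\[
    z_B=\int_{[0,1]}t(B,\phi_\lambda)\,d\lambda\qquad\text{for every finite }B,
\]
where $\phi_\lambda:=\phi(\cdot,\cdot,\lambda)$ and $t(B,\psi)$ denotes the homomorphism density of the subgraph $B$ in the graphon $\psi$. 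For a symmetric kernel $\psi$ write $d_\psi(u)=\int_0^1\psi(u,v)\,dv$ for its degree function, $\rho(\psi)=\iint\psi$ for its edge density, and $(\psi\diamond\psi)(x,y)=\int_0^1\psi(x,z)\psi(z,y)\,dz$. Using symmetry one checks the identities $t(S_2,\psi)=\lVert d_\psi\rVert_2^2$, $\lVert\psi\diamond\psi\rVert_1=\lVert d_\psi\rVert_2^2$, and $t(C_4,\psi)=\lVert\psi\diamond\psi\rVert_2^2$, so the three hypotheses of the lemma (the single edge, the two-star $S_2$, and the four-cycle $C_4$) read $\int\rho(\phi_\lambda)\,d\lambda=\eta$, $\int\lVert d_{\phi_\lambda}\rVert_2^2\,d\lambda=\eta^2$, and $\int\lVert\phi_\lambda\diamond\phi_\lambda\rVert_2^2\,d\lambda=\eta^4$.

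The first step exploits the two-star condition via two nested Cauchy--Schwarz inequalities on the probability space $[0,1]$: $\lVert d_{\phi_\lambda}\rVert_2^2\ge\rho(\phi_\lambda)^2$, with equality iff $d_{\phi_\lambda}$ is a.e.\ constant, and $\int\rho(\phi_\lambda)^2\,d\lambda\ge\bigl(\int\rho(\phi_\lambda)\,d\lambda\bigr)^2$, with equality iff $\rho(\phi_\lambda)$ is a.e.\ constant in $\lambda$. Chaining, $\eta^2=\int\lVert d_{\phi_\lambda}\rVert_2^2\,d\lambda\ge\int\rho(\phi_\lambda)^2\,d\lambda\ge\eta^2$, so equality holds throughout; hence $\rho(\phi_\lambda)=\eta$ and $d_{\phi_\lambda}\equiv\eta$ for a.e.\ $\lambda$. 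The second step feeds this into the four-cycle condition: for a.e.\ $\lambda$ one now has $\lVert\phi_\lambda\diamond\phi_\lambda\rVert_1=\lVert d_{\phi_\lambda}\rVert_2^2=\eta^2$, so $\lVert\phi_\lambda\diamond\phi_\lambda\rVert_2^2\ge\lVert\phi_\lambda\diamond\phi_\lambda\rVert_1^2=\eta^4$ by Cauchy--Schwarz; integrating in $\lambda$ and using $z_{C_4}=\eta^4$ forces equality, whence $\phi_\lambda\diamond\phi_\lambda\equiv\eta^2$ for a.e.\ $\lambda$ (equality in $\lVert g\rVert_1\le\lVert g\rVert_2$ on a probability space forces the nonnegative $g$ to be constant).

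The crux is the purely analytic implication: a symmetric kernel $\psi$ with $d_\psi\equiv\eta$ and $\psi\diamond\psi\equiv\eta^2$ must equal $\eta$ a.e. I would argue operator-theoretically. Let $T$ be the self-adjoint Hilbert--Schmidt operator on $L^2[0,1]$ with kernel $\psi$; then $\psi\diamond\psi\equiv\eta^2$ says precisely $T^2=\eta^2 J$, where $J$ is the rank-one operator $Jf=\bigl(\int_0^1 f\bigr)\mathbf 1$. For any $v\perp\mathbf 1$ this gives $\lVert Tv\rVert^2=\langle T^2 v,v\rangle=\eta^2\langle Jv,v\rangle=0$, so $T$ annihilates $\mathbf 1^\perp$; since also $T\mathbf 1=d_\psi\,\mathbf 1=\eta\,\mathbf 1$, we conclude $T=\eta J$, i.e.\ $\psi\equiv\eta$ a.e. Applying this with $\psi=\phi_\lambda$ for a.e.\ $\lambda$ yields $t(B,\phi_\lambda)=\eta^{|B|}$ for a.e.\ $\lambda$ and every finite $B$, hence $z_B=\eta^{|B|}$; these are exactly the M\"obius parameters of the Erd\"os--R\'enyi model with edge probability $\eta$, so by M\"obius inversion \eqref{eq:mobius} the ties of $X$ are i.i.d.\ $\mathrm{Bernoulli}(\eta)$.

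The main obstacle is this last analytic step: the implication ``$\psi\diamond\psi$ constant $\Rightarrow\psi$ constant'' cannot be obtained by elementary manipulation of kernels — one cannot simply take ``square roots'' — and genuinely relies on self-adjointness, i.e.\ on the symmetry of the array. A secondary but entirely routine nuisance is propagating the ``for a.e.\ $\lambda$'' qualifiers through the nested Cauchy--Schwarz arguments and verifying their equality cases; beyond Corollary~\ref{cor:7} and Proposition~\ref{prop:1n}, no finer graphon theory is needed.
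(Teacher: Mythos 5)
Your proof is correct, and its overall architecture coincides with the paper's: represent the M\"obius parameters via the mixture of graphons from Corollary~\ref{cor:7}, use the two-star condition together with the equality case of Cauchy--Schwarz (in both $v$ and $\lambda$) to force $d_{\phi_\lambda}\equiv\eta$ for a.e.\ $\lambda$, and then use the four-cycle condition to force $\phi_\lambda\equiv\eta$. The genuine difference lies in how you resolve the one delicate step, which you correctly identify as the crux. The four-cycle condition only yields $\int\phi_\lambda(u,v)\phi_\lambda(u',v)\,dv=\eta^2$ for \emph{almost all} $(u,u')$, and one wants the diagonal $u=u'$, which is a null set. The paper attacks this head-on: it extends $\phi$ periodically in $u$ and invokes Lebesgue's differentiation theorem to recover $\int\phi_\lambda(u,v)^2\,dv=\eta^2$ for a.e.\ $u$, after which the variance-zero argument finishes. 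You instead pass to the self-adjoint Hilbert--Schmidt operator $T$ with kernel $\phi_\lambda$, read the a.e.\ identity of kernels as the operator identity $T^2=\eta^2J$, and deduce $T=\eta J$ from $\|Tv\|^2=\langle T^2v,v\rangle=0$ on $\mathbf 1^\perp$ together with $T\mathbf 1=\eta\mathbf 1$; since Hilbert--Schmidt operators determine their kernels a.e., $\phi_\lambda\equiv\eta$. This sidesteps the diagonal issue entirely (an a.e.\ identity on $[0,1]^2$ already determines the operator), at the cost of invoking self-adjointness, i.e.\ the symmetry of the kernel --- which the paper's argument also uses, just less visibly. Both arguments are sound; yours is arguably cleaner and makes explicit where symmetry is indispensable, while the paper's is more elementary in that it stays at the level of integrals of the kernel.
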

\begin{proof}Since all elements are exchangeable, we have from (\ref{eq:mixmobius}) that
\[\eta=z_{\n{\vcenter{\hbox{\includegraphics[scale=0.05]{figureex2-1.pdf}}}}}=
\int_{[0,1]^3}\phi(u,v,\lambda)\,du\,dv\,d\lambda.\]
But also
\[z_{\n{\vcenter{\hbox{\includegraphics[scale=0.05]{figureex2-2.pdf}}}}}=\eta^2
=\int_{[0,1]^4}\phi(u,v,\lambda)\phi(u',v,\lambda)\,du\,du'\,dv\,d\lambda =
\int_{[0,1]^2}\left\{\int_{[0,1]}\phi(u,v,\lambda)\,du\right\}^2\,dv\,d\lambda.\]
Thus,
\[\int_{[0,1]^2}\left\{\int_{[0,1]}\phi(u,v,\lambda)\,du-\eta\right\}^2\,dv\,d\lambda=0,\]
and hence
\[\int \phi(u,v,\lambda)\,du =\eta\quad \mbox{for almost all $(v,\lambda)$},\]
which in turn implies that, if we assume $\nodeset=\{1,2,3,\ldots\}$,
\[z_{\n{\vcenter{\hbox{\includegraphics[scale=0.05]{figureex2-3.pdf}}}}}=P(X_{12}=1,X_{34}=1)=\int_{[0,1]^5}\phi(u,v,\lambda)\phi(u',v',\lambda)\,du\,du'\,dv\,\,dv'\,d\lambda=\eta^2.\]
Next, we have that
\begin{eqnarray*}\eta^4=z_{\n{\vcenter{\hbox{\includegraphics[scale=0.05]{figureex6-1.pdf}}}}}&=&
\int_{[0,1]^5}\phi(u,v,\lambda)\phi(u',v,\lambda)\phi(u,v',\lambda)\phi(u',v',\lambda)\,du\,du'\,dv\,dv'\,d\lambda\\&=&
\int_{[0,1]^3}\left\{\int_{[0,1]}\phi(u,v,\lambda)\phi(u',v,\lambda)\,dv\right\}^2\,du\,du'\,d\lambda,\end{eqnarray*}
and hence we conclude as above that, for almost all $(u,u',\lambda)$,
\begin{equation}
    \label{eq:dv}
    \int_{[0,1]}\phi(u,v,\lambda)\phi(u',v,\lambda)\,dv=\eta^2.
\end{equation}
We would now wish to let $u=u'$ in the above equation but this might just be the exceptional set. However, we may
 extend $\phi$ periodically to be defined on $(-\infty,\infty)\times [0,1]^2$  by letting
 \[\phi(u,v,\lambda)=\phi(u\,\text{mod}\, 1,v,\lambda)\]so that \eqref{eq:dv}
 still holds almost surely. Then, using Lebesgue's theorem, we can write
\begin{eqnarray*}\int_{[0,1]}\phi(u,v,\lambda)^2\,dv&=&
\int_{[0,1]}\lim_{\epsilon\to 0} \frac{1}{4\epsilon^2}\int_{-\epsilon}^\epsilon
\int_{-\epsilon}^\epsilon \phi(u+s,v,\lambda)\phi(u+t,v,\lambda)\,ds\,dt\,dv\\&=&\lim_{\epsilon\to 0} \frac{1}{4\epsilon^2}
\int_{-\epsilon}^\epsilon \int_{-\epsilon}^\epsilon
\int_{[0,1]}\phi(u+s,v,\lambda)\phi(u+t,v,\lambda)\,dv\,ds\,dt=\eta^2. \end{eqnarray*}
As above, we conclude, for almost all $\lambda, u,v$, that
$\phi(u,v,\lambda)=\eta$ and hence we have $z_D=\eta^{|D|}$, implying that all
edges are independent. Hence, the model is Erd\"os--Renyi.
\halm\end{proof}

\begin{prop}\label{prop:bi_comp_kneser}
Consider a countable nodeset $\nodeset$
    and a strongly consistent family $\{ \mathcal{P}_{\nodeset'},
\nodeset'\subseteq \nodeset\}$ of random network models; if all elements of
$\mathcal{P}_{\nodeset'}$ are exchangeable and Markov w.r.t.\ the complement of
the bidirected line graph $L^c_{\leftrightarrow}(\nodeset')$, then  they are all Erd{\"o}s--Renyi models.
\end{prop}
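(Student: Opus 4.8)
The plan is to reduce the claim to a statement about a single exchangeable distribution on a countably infinite node set, bring in the de Finetti--type representation of \Cref{cor:7}, and then quote \Cref{lem:erdosrenyi}.

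First I would fix a finite $\nodeset'\subseteq\nodeset$ and a distribution $P\in\mathcal{P}_{\nodeset'}$ and extend it to a random network $\tilde P$ on all of $\nodeset$ (which we take to be countably infinite). For every finite $\nodeset''$ with $\nodeset'\subseteq\nodeset''\subseteq\nodeset$, strong consistency makes the set $\mathcal{Q}_{\nodeset''}=\{Q\in\mathcal{P}_{\nodeset''}:\Pi^{\nodeset''}_{\nodeset'}(Q)=P\}$ non-empty; these sets are finite and the marginalization operators map them into one another, so the inverse limit is non-empty, and by Kolmogorov's theorem a coherent thread through it defines a distribution $\tilde P$ on $\mathcal{G}_{\nodeset}$ whose finite marginals all belong to the corresponding $\mathcal{P}_{\nodeset''}$. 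In particular $\tilde P$ is exchangeable; moreover, since each product constraint \eqref{eq:prod_rest} attached to $L^c_{\leftrightarrow}(\nodeset)$ involves only finitely many dyads, and since adjacency of two dyads in a bidirected line-graph complement does not depend on the ambient node set, each such constraint is inherited from the appropriate finite marginal, so $\tilde P$ is Markov with respect to $L^c_{\leftrightarrow}(\nodeset)$. Being exchangeable over an infinite node set, $\tilde P$ then satisfies, by \Cref{cor:7}, $z_B=\int_{[0,1]}\int_{[0,1]^{|B|}}\prod_{ij\in B}\phi(u_i,u_j,\lambda)\,du\,d\lambda$ for some measurable $\phi\colon[0,1]^3\to[0,1]$.

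Next I would verify the three numerical hypotheses of \Cref{lem:erdosrenyi} for $\tilde P$. Write $\eta=z_{\{ij\}}=\mathbb{P}(X_{ij}=1)$, well defined by exchangeability. If $\eta=0$ then $z_B=0$ for every non-empty $B$ and $\tilde P$ concentrates on the empty graph; if $\eta=1$ then $z_B=1$ for every $B$ and $\tilde P$ concentrates on the complete graph; in both cases $\tilde P$ is Erd\"os--R\'enyi, so assume $\eta\in(0,1)$. The incident dyads $12,23$ are non-adjacent in $L^c_{\leftrightarrow}(\nodeset)$, so $\{12,23\}$ is disconnected there with singleton components and \eqref{eq:prod_rest} gives $z_{\{12,23\}}=z_{\{12\}}z_{\{23\}}=\eta^2$. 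The four dyads $12,23,34,41$ of a $4$-cycle induce in $L^c_{\leftrightarrow}(\nodeset)$ exactly the two edges $12\sim34$ and $23\sim41$, so this set is disconnected with the two components $\{12,34\}$ and $\{23,41\}$, which are isomorphic edge-induced subgraphs of the complete graph (each a disjoint pair of ties); hence \eqref{eq:prod_rest} and exchangeability give $z_{\{12,23,34,41\}}=z_{\{12,34\}}z_{\{23,41\}}=z_{\{12,34\}}^{2}$. Finally, exactly as in the opening steps of the proof of \Cref{lem:erdosrenyi}, the identities $z_{\{12\}}=\eta$ and $z_{\{12,23\}}=\eta^2$ force, via Cauchy--Schwarz in the integral representation above, $\int_{[0,1]}\phi(u,v,\lambda)\,du=\eta$ for almost all $(v,\lambda)$; integrating one endpoint out of each edge in the representation of $z_{\{12,34\}}$ then gives $z_{\{12,34\}}=\eta^2$, whence $z_{\{12,23,34,41\}}=\eta^4$.

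With $z_{\{12\}}=\eta$, $z_{\{12,23\}}=\eta^2$, and $z_{\{12,23,34,41\}}=\eta^4$ established, \Cref{lem:erdosrenyi} shows that $\tilde P$ is the Erd\"os--R\'enyi model with tie probability $\eta$, and marginalizing shows the same for $P$; since $P$ was arbitrary, the whole family consists of Erd\"os--R\'enyi models. The step I expect to carry the weight is \emph{not} the reduction (a routine projective-limit argument) but the identity $z_{\{12,34\}}=\eta^2$: the bidirected Markov property by itself only delivers $z_{\{12,23,34,41\}}=z_{\{12,34\}}^{2}$ and leaves $z_{\{12,34\}}$ a free M\"obius parameter of the finite model, because a disjoint pair of ties is a \emph{connected} vertex set of $L^c_{\leftrightarrow}$. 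It is precisely strong consistency --- infinite extendability, entering through \Cref{cor:7} and \Cref{lem:erdosrenyi} --- that forces $z_{\{12,34\}}=\eta^2$; without it there exist finitely exchangeable $L^c_{\leftrightarrow}$-Markov networks that are not Erd\"os--R\'enyi.
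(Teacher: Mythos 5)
Your proof is correct and follows essentially the same route as the paper's: marginal independence of incident dyads gives $z_{S_2}=\eta^2$, the Markov property on four nodes factors the four-cycle parameter as the square of the two-disjoint-ties parameter, the integral representation of Corollary~\ref{cor:7} pins the latter down as $\eta^2$, and Lemma~\ref{lem:erdosrenyi} finishes. You additionally spell out the extension to the infinite node set and the degenerate cases $\eta\in\{0,1\}$, which the paper leaves implicit (only note that the fibres $\mathcal{Q}_{\nodeset''}$ in your projective-limit step are compact rather than finite).
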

\begin{proof}
		We assume that $\nodeset=\{1,2,3,\ldots\}$.
    By exchangeability, we have that, for some $\eta \in [0,1]$,
$z_{\n{\vcenter{\hbox{\includegraphics[scale=0.05]{figureex2-1.pdf}}}}} =
\mathbb{P}(X_{12}=1) = \eta$.
Since $X_{12}\cip X_{23}$, we also  have that
\[z_{\n{\vcenter{\hbox{\includegraphics[scale=0.05]{figureex2-2.pdf}}}}}=\mathbb{P}(X_{12}=X_{23}=1)=\mathbb{P}(X_{12}=1)\mathbb{P}(X_{23}=1)=\eta^2=z_{\n{\vcenter{\hbox{\includegraphics[scale=0.05]{figureex2-3.pdf}}}}},\]
where the last equality is obtained as in the proof of Lemma~\ref{lem:erdosrenyi}.
The bidirected complement of the
incidence graph for $\nodeset'=\{1,2,3,4\}$ contains the edges \[ \{12\leftrightarrow34, 13\leftrightarrow 24, 14\leftrightarrow 23\}.\]
Since all the distributions are Markov w.r.t.\ this graph we  have
\[z_{\n{\vcenter{\hbox{\includegraphics[scale=0.05]{figureex6-1.pdf}}}}}=P(X_{12}=1,X_{34}=1)P(X_{14}=1,X_{23}=1)=(z_{\n{\vcenter{\hbox{\includegraphics[scale=0.05]{figureex2-3.pdf}}}}})^2=\eta^4.\]
and the conclusion now follows from Lemma~\ref{lem:erdosrenyi}.
\halm\end{proof}

Analogous results hold true if the Markov assumption in
Proposition~\ref{prop:bi_comp_kneser} is replaced with Markov properties with
respect to
the undirected incidence graph as in the Frank--Strauss models or the undirected
complement of the line graph, as in the Kneser models. The proof of these facts
are quite elementary and we abstain from giving the details. In fact these
sequences are not even weakly consistent, as marginals of undirected Markov
distributions are not Markov with respect to the corresponding induced subgraph.

Finally we hasten to point out that indeed the \emph{system of marginal beta models is strongly consistent} by construction and that this clearly remains true if  the mixing distribution $F$ is assumed to belong to any specific subset of distributions $\mathcal{F}$.

\section{Discussion}
We have derived a complete characterization of possible Markov
properties of exchangeable network models, and studied some of the implied models.
We also consider summarized exchangeable network models in which the probability
of a network is only a function of its degree distributions, and relate them to
the other models we discuss in the paper.
Overall, our findings have unveiled several interesting properties of
exchangeable network
models and established connections among seemingly disparate concepts used in 
network analysis, the probability literature on exchangeable arrays and the discrete
mathematics literature on graph limits.

While we have focused on exchangeable
and extendable network models, we point out that these are by no means the only interesting network models. In fact, many other
parametric
models, not fulfilling those
properties,  can also be successfully deployed to represent and study networks. Examples include
the beta model \citep{chatterjee:diaconis:sly:11,Yan,rin13} and the sparse
Bernoulli model considered in \cite{KrKo15q}; see also the remarks in the introduction on partial conditional independence and  on  weakened forms of exchangeability. 

We conclude with a final remark on the computational difficulties associated
with
fitting the various models discussed in the paper. As it is very often the case with ERGMs,
model fitting can be rather challenging, and our models are no exception.
For the DE and DSE models, one
could in principle rely on the algorithms proposed in \cite{drt08},
appropriately modified to allow for zero counts and for the fact that the MLE of
the model parameters is nearly all cases on the boundary of the parameter space.
However such algorithms may not
scale well with the network size.
As for the model given in  \eqref{eq:expexch}, and its submodels, the SE and
SE* and the EM models, parameter estimation can be carried out using
 pseudo-maximum likelihood and MCMC-based methods,
 \citep[see, e.g.][]{CIS-93373,geyer.thompson:1992}, implemented, for instance, in the {\tt
 R} package {\tt ergm} \citep{ergm.package}. In all these cases, however, we
 expect that fitting could be problematic,  as these
procedures do not scale well with the network size and MCMC convergence can be
very slow or hard to assess. In particular, we expect that the  issues of
degeneracy \citep[see][]{Schweinberger,rinaldo2009} plaguing many ERGMs will
also impact estimation of these models.

\section*{Acknowledgements}
The authors are grateful to \'{E}va Czabarka for proving an earlier version of
Lemma~\ref{lem:onlyif}.
Alessandro Rinaldo and Kayvan Sadeghi were partially supported by AFOSR
grant FA9550-14-1-0141.

\small

\end{document}